\newlength{\standardunitlength}
\newcommand{\bea}{\begin{eqnarray}}
\newcommand{\ena}{\end{eqnarray}}
\newcommand{\beas}{\begin{eqnarray*}}
\newcommand{\enas}{\end{eqnarray*}}
\newcommand{\ignore}[1]{}
\newcommand{\fh}{\mathfrak{h}}
\newcommand{\GL}{\operatorname{GL}}
\newcommand{\GU}{\operatorname{GU}}
\newcommand{\Sp}{\operatorname{Sp}}
\newcommand{\SL}{\operatorname{SL}}
\newcommand{\Mat}{\operatorname{Mat}}
\newcommand{\Com}{\operatorname{Com}}
\newcommand{\fs}{\mathfrak{sp}}
\newcommand{\fo}{\mathfrak{so}}
\newcommand{\rank}{\mathrm{rank \ }}
\newcommand{\Sym}{\mathrm{Sym}}
\newcommand{\F}{\mathbb{F}}
\newtheorem{prop}{Proposition}[section]
\newtheorem{lemma}[prop]{Lemma}
\newtheorem{theorem}[prop]{Theorem}
\begin{document}

\title [Enumeration of Commuting Pairs] {Enumeration of Commuting Pairs in Lie Algebras
over Finite Fields}

\author{Jason Fulman}

\address{Department of Mathematics\\
        University of Southern California\\
        Los Angeles, CA, 90089}
\email{fulman@usc.edu}

\author{Robert Guralnick}
\address{Department of Mathematics\\University of Southern California\\
        Los Angeles, CA, 90089}
\email{guralnic@usc.edu}



\date{October 17, 2016}

\begin{abstract} Feit and Fine derived a generating function for the number of ordered pairs of commuting $n \times n$ matrices
over the finite field $\mathbb{F}_q$. This has been reproved and studied by Bryan and Morrison from the viewpoint of motivic
Donaldson-Thomas theory. In this note we give a new proof of the Feit-Fine result, and generalize it to the Lie algebra of
finite unitary groups and to the Lie algebra of odd characteristic finite symplectic groups. We extract some asymptotic
information from these generating functions. Finally, we derive generating functions for the number of commuting nilpotent
elements for the Lie algebras of the finite general linear and unitary groups, and of odd characteristic symplectic groups.
\end{abstract}

\maketitle

\section{Introduction}

Let $G_0=1$, and for $n \geq 1$, let $G_n$ be the number of ordered pairs of commuting $n \times n$ matrices (not necessarily invertible) over $\mathbb{F}_q$. Feit and Fine \cite{FF} proved the following theorem.

\begin{theorem} \label{ff}
\[ \sum_{n \geq 0} \frac{G_n u^n}{|\GL(n,q)|} = \prod_{i \geq 1} \prod_{l \geq 0} \frac{1}{1-u^i/q^{l-1}} .\]
\end{theorem}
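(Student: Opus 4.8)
The plan is to sort the count of commuting pairs by the rational canonical form of the first matrix. Write $Z_{M_n}(A)$ for the centralizer of $A$ in the full matrix algebra $\Mat(n,q)$ and $Z_{\GL_n}(A)$ for its centralizer in $\GL(n,q)$. Since a commuting pair is a matrix $A$ together with an element of $Z_{M_n}(A)$, we have $G_n=\sum_{A\in\Mat(n,q)}|Z_{M_n}(A)|$; grouping the $A$ into $\GL(n,q)$-conjugacy classes (a class through $A$ having $|\GL(n,q)|/|Z_{\GL_n}(A)|$ elements) gives
\[
\frac{G_n}{|\GL(n,q)|}\;=\;\sum_{[A]}\frac{|Z_{M_n}(A)|}{|Z_{\GL_n}(A)|},
\]
the sum over conjugacy classes. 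Because $Z_{\GL_n}(A)$ is exactly the unit group of the finite $\mathbb{F}_q$-algebra $Z_{M_n}(A)$, each summand is the reciprocal of the proportion of units in that algebra, and for a finite ring this proportion depends only on the semisimple quotient $Z_{M_n}(A)/\operatorname{rad}$.

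Next I would invoke the classification of conjugacy classes in $\Mat(n,q)$: a class corresponds to an assignment $\phi\mapsto\lambda_\phi$ of a partition to each monic irreducible $\phi\in\mathbb{F}_q[x]$ with $\sum_\phi|\lambda_\phi|\deg\phi=n$. The centralizer algebra factors as a direct product over $\phi$, and its $\phi$-block, once one identifies $\mathbb{F}_q[x]/\phi(x)^N$ with $\mathbb{F}_{q^{\deg\phi}}[t]/t^N$ via a coefficient field, is the centralizer of a nilpotent element of Jordan type $\lambda_\phi$ over $\mathbb{F}_{q^{\deg\phi}}$; hence its semisimple quotient is $\prod_i\Mat(m_i(\lambda_\phi),q^{\deg\phi})$, where $m_i(\lambda)$ denotes the number of parts of $\lambda$ equal to $i$. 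So the proportion of units in the $\phi$-block is $\prod_i\prod_{j=1}^{m_i(\lambda_\phi)}(1-q^{-j\deg\phi})$, and, crucially, all the powers of $q$ in the size of the centralizer algebra cancel out of the ratio. Summing over classes turns the generating function into an Euler product
\[
\sum_{n\geq 0}\frac{G_n u^n}{|\GL(n,q)|}\;=\;\prod_{\phi}\Bigl(\sum_{\lambda}\frac{u^{|\lambda|\deg\phi}}{\prod_i\prod_{j=1}^{m_i(\lambda)}(1-q^{-j\deg\phi})}\Bigr).
\]

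For a fixed $\phi$ of degree $d$ I would set $x=q^{-d}$, identify a partition with its multiplicity vector $(m_1,m_2,\dots)$ with $|\lambda|=\sum_i i\,m_i$, and factor the inner sum over $i$ as $\prod_{i\geq 1}\sum_{m\geq 0}(u^{di})^m/(x;x)_m$ with $(x;x)_m=\prod_{j=1}^m(1-x^j)$. Euler's identity $\sum_{m\geq 0}y^m/(x;x)_m=\prod_{l\geq 0}(1-yx^l)^{-1}$ collapses the inner sum to $\prod_{i\geq 1}\prod_{l\geq 0}(1-u^{di}q^{-dl})^{-1}$. Finally I would take the product over all $\phi$, gathering the $N_d$ monic irreducibles of each degree $d$ and writing $u^{di}q^{-dl}=(u^iq^{-l})^d$, then apply for each fixed pair $(i,l)$ the identity $\prod_{d\geq 1}(1-z^d)^{-N_d}=(1-qz)^{-1}$ (the zeta function of $\mathbb{A}^1_{\mathbb{F}_q}$) with $z=u^iq^{-l}$. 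This turns $\prod_d\prod_{i,l}(1-u^{di}q^{-dl})^{-N_d}$ into $\prod_{i\geq 1}\prod_{l\geq 0}(1-u^iq^{1-l})^{-1}=\prod_{i\geq 1}\prod_{l\geq 0}(1-u^i/q^{l-1})^{-1}$, the claimed product.

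The step requiring the most care is the structural one: verifying that the $\phi$-block of $Z_{M_n}(A)$ really is a nilpotent centralizer over $\mathbb{F}_{q^{\deg\phi}}$ with the stated semisimple quotient, and confirming the cancellation of the $q$-powers in $|Z_{M_n}(A)|/|Z_{\GL_n}(A)|$. After that the argument is a chain of standard $q$-series manipulations, with no analytic subtlety, since the whole identity lives in $\mathbb{Z}[[u]]$ with coefficients that are rational functions of $q$.
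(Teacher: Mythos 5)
Your proof is correct and follows essentially the same route as the paper's: decompose by rational canonical form of the first matrix, reduce to an Euler product over monic irreducibles $\phi$, apply Euler's identity to the partition sum, and then collapse the product over $\phi$ via the zeta-function identity $\prod_\phi(1-z^{d(\phi)})^{-1}=(1-qz)^{-1}$. The one cosmetic difference is that you cancel the $q$-power in the centralizer order implicitly, by noting that the ratio $|Z_{M_n}(A)|/|Z_{\GL_n}(A)|$ is the reciprocal unit density and therefore depends only on the semisimple quotient, whereas the paper (Lemmas~\ref{jordan} and~\ref{comwith}) computes the class size and centralizer order separately and observes the explicit exponent $\sum_i(\lambda'_{\phi,i})^2$ cancel.
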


Theorem \ref{ff} was reproved and refined by Bryan and Morrison \cite{BM} from the viewpoint of motivic Donaldson-Thomas theory, using ``power structures''.
See also \cite{BBS}.

In this paper we give a new proof of the Feit-Fine theorem. Answering a question of Matt Young (an expert in Donaldson-Thomas theory), we obtain analogs of the Feit-Fine theorem for unitary groups and for odd characteristic symplectic groups. More precisely, let $U_0=1$ and for $n \geq 1$, let $U_n$ be the number of ordered pairs of commuting elements of the Lie algebra of $\GU(n,q)$. We will show that  \[ \sum_{n \geq 0} \frac{U_n u^n}{|\GU(n,q)|} = \prod_{i \geq 1} \prod_{l \geq 0} \frac{1}{1-(-1)^l u^i/q^{l-1}}   \] Let $S_0=1$ and for $n \geq 1$, let $S_n$ be the number of ordered pairs of commuting elements of the Lie algebra of $\Sp(2n,q)$ in odd characteristic. We will prove that \[ \sum_{n \geq 0} \frac{S_n u^n}{|\Sp(2n,q)|} = \frac{\prod_{i \geq 1} (1+u^i)} {\prod_{i \geq 1} \prod_{l \geq 0} (1- u^i/q^{2l-1})} .\] Our methods will also work for odd characteristic orthogonal groups, but we do not treat them as this is quite tedious and the ideas are the same as for odd characteristic symplectic groups. On the other hand, for various reasons
(e.g.  nilpotent classes are no longer determined by their Jordan form even in the algebraic group and the description of semisimple classes is different
as well), even characteristic symplectic and orthogonal groups are much trickier, and are beyond the scope of this paper.

It is easy to extract asymptotic information from our generating functions. For example from the original Feit-Fine result, we will conclude that for $q$ fixed
and $n$ tending to infinity, \[ \lim_{n \rightarrow \infty} \frac{G_n}{q^{n^2+n}} = \prod_{j \geq 1} (1-1/q^j)^{-j} .\] This is consistent with the fact that the variety of $n \times n$ commuting matrices over an algebraically closed field has dimension $n^2+n$.

Finally, we derive an analog of the Feit-Fine theorem for ${\it nilpotent}$ commuting pairs. Let $NG_0=1$, and for $n \geq 1$, let $NG_n$ be the number of ordered pairs of commuting nilpotent $n \times n$ matrices over $\mathbb{F}_q$. We will prove that
\[ \sum_{n \geq 0} \frac{NG_n u^n}{|\GL(n,q)|} = \prod_{i \geq 1} \prod_{l \geq 1} \frac{1}{1-u^i/q^l} .\] We also obtain generating functions for
$NU_n$ (the number of ordered pairs of commuting nilpotent elements of the Lie algebra of $\GU(n,q)$), and for $NS_n$ (the number of ordered pairs of commuting nilpotent elements of the Lie algebra of the odd characteristic symplectic group $\Sp(2n,q)$).

We note there  has been quite a lot of work on commuting varieties.
One of the first results in this area
is the result of Motzin and Taussky \cite{MT} that the variety of commuting pairs in $\Mat(n,k)$ with $k$
algebraically closed is irreducible of dimension $n^2 + n$.   This was extended by Richardson \cite{R2}
to reductive Lie algebras $\mathfrak{g}$  in characteristic $0$ and by Levy \cite{Le} in good characteristic
with an extra mild assumption.
It follows easily that in this case the dimension of the commuting variety is $\dim \mathfrak{g} + \rank \mathfrak{g}$.
Recall that good characteristic means any characteristic in type $A$, characteristic not $2$ for groups of type $B,C,D$ and 
characteristic not $2$ or $3$
for $G_2, F_4, E_6$ and $E_7$ and characteristic not $2, 3$ or $5$ for $E_8$.    Very good characteristic means good
characteristic for groups not of type $A$ and characteristic not dividing $n+1$ for groups of type $A_n$.

There has also been quite a bit of study for the variety of commuting nilpotent pairs in a simple Lie algebra $\mathfrak{g}$
corresponding to a Dynkin diagram in good characteristic.
The components are in bijection with the set of distinguished orbits of nilpotent elements (i.e. the centralizer
in the group contains no nontrivial torus) and all components have dimension $\dim \mathfrak{g}$
(under the assumption that the characteristic is very good).  This was proved by Premet \cite{Pr} after some
earlier  results of Baranovosky \cite{Ba} and Basili \cite{Bs} for  $\frak{gl}_n$  (with some restrictions on the characteristic).
In particular, this variety is irreducible for $\Mat(n,k)$.  It will have a large number of components for some of the other
simple Lie algebras.     For example when $\frak{g} = \frak{sp}(2n,k)$ and the characteristic of $k$ is not $2$, the number
of irreducible components is the number of partitions of $n$ with all parts of distinct sizes.

Our methods would give similar answers for the special orthogonal Lie algebras in characteristic not $2$.  We do not work out the details
of this case and leave it to the interested reader.

The organization of this paper is as follows. Section \ref{GL} treats the general linear groups. It gives a new proof of the Feit-Fine theorem, obtains asymptotic information from it, and enumerates commuting pairs of nilpotent matrices. Section \ref{U} studies the Lie algebra of the finite unitary groups $\GU(n,q)$, giving unitary analogs of all the results in Section \ref{GL}. Section \ref{Sp} studies the Lie algebra of the finite symplectic groups $\Sp(2n,q)$ in the case of odd characteristic, giving symplectic analogs of all the results in Section \ref{GL}.

\section{General linear groups} \label{GL}

We use the following notation about partitions. Let
$\lambda$ be a partition of some non-negative integer $|\lambda|$ into
parts $\lambda_1 \geq \lambda_2 \geq \cdots$. Let $m_i(\lambda)$ be the
number of parts of $\lambda$ of size $i$, and let $\lambda'$ be the
partition dual to $\lambda$ in the sense that $\lambda_i' = m_i(\lambda) +
m_{i+1}(\lambda) + \cdots$. It is also useful to define the diagram
associated to $\lambda$ by placing $\lambda_i$ boxes in the $i$th row. We
use the convention that the row index $i$ increases as one goes downward.
So the diagram of the partition $(5441)$ is

\[ \begin{array}{c c c c c} \framebox{} & \framebox{} & \framebox{}
& \framebox{} &  \framebox{} \\ \framebox{} &  \framebox{}& \framebox{} &
\framebox{} & \\
\framebox{} & \framebox{} & \framebox{} & \framebox{}&    \\ \framebox{} &
& & &
\end{array}
\] and $\lambda_i'$ can be interpreted as the size of the $i$th column.

The notation $(1/q)_r$ will denote $(1-1/q)(1-1/q^2) \cdots (1-1/q^r)$.

The next two lemmas are well known and will be useful. Lemma \ref{euler} is due
to Euler and is proved on page 19 of \cite{A}.

\begin{lemma} \label{euler}
\[ \sum_{m \geq 0} \frac{u^m}{(1/q)_m} = \prod_{l \geq 0} \frac{1}{1-u/q^l} .\]
\end{lemma}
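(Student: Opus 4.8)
The plan is to derive the identity from a simple functional equation, which is the standard route to Euler's formula and is essentially what the cited page of \cite{A} does. Write $F(u) = \prod_{l \geq 0} \frac{1}{1-u/q^l}$. Since $q$ is a fixed integer at least $2$, this product converges uniformly on a neighborhood of $u = 0$, so $F$ is analytic there and may be expanded as $F(u) = \sum_{m \geq 0} a_m u^m$ with $a_0 = F(0) = 1$. Splitting off the $l = 0$ factor and reindexing the remaining product by $l \mapsto l+1$ gives the relation $(1-u)\,F(u) = F(u/q)$.

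First I would substitute the power series expansion into $(1-u)F(u) = F(u/q)$ and compare coefficients of $u^m$. With the convention $a_{-1} = 0$, the left side contributes $a_m - a_{m-1}$ and the right side contributes $a_m q^{-m}$, so $a_m(1 - q^{-m}) = a_{m-1}$ for all $m \geq 1$. Solving this recursion with the initial value $a_0 = 1$ yields $a_m = \prod_{i=1}^m (1 - q^{-i})^{-1} = 1/(1/q)_m$, which is precisely the coefficient appearing on the left side of the lemma. This establishes the identity as an identity of analytic functions near $0$, hence as an identity of formal power series in $u$ whose coefficients are rational functions of $q$.

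A purely combinatorial alternative would be to expand $\prod_{l \geq 0} (1 - u/q^l)^{-1} = \prod_{l \geq 0} \sum_{k \geq 0} u^k q^{-lk}$ and observe that the coefficient of $u^n$ is $\sum_\mu q^{-|\mu|}$, the sum over partitions $\mu$ with at most $n$ parts; conjugating partitions turns this into the generating function $\prod_{i=1}^n (1 - q^{-i})^{-1} = 1/(1/q)_n$ for partitions with all parts of size at most $n$. I expect no real obstacle here beyond bookkeeping: the one point that warrants a line of care is justifying that the infinite product defines a genuine power series, so that coefficient comparison (or, in the second approach, interchanging the product with the sums) is legitimate.
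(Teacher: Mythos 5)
Your argument is correct, and since the paper simply cites page 19 of Andrews \cite{A} rather than giving its own proof, there is nothing in the paper to diverge from. The functional-equation route you take, namely observing $(1-u)F(u)=F(u/q)$ for $F(u)=\prod_{l\geq 0}(1-u/q^l)^{-1}$ and solving the resulting coefficient recursion $a_m(1-q^{-m})=a_{m-1}$ with $a_0=1$, is the standard proof of Euler's identity and is essentially the one in \cite{A}; your combinatorial alternative via conjugation of partitions (at most $n$ parts versus parts of size at most $n$) is also a correct and well-known derivation.
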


Lemma \ref{poly} is elementary. In its statement, and for the rest of the paper,
we let $d(\phi)$ denote the degree of a polynomial $\phi$. For completeness, we include a proof.

\begin{lemma} \label{poly}
\[ \prod_{\phi} (1-u^{d(\phi)}) = 1 - uq, \] where the product is over all monic, irreducible
polynomials over the finite field $\mathbb{F}_q$.
\end{lemma}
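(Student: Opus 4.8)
The plan is to pass to reciprocals and compare coefficients of $u^n$. Since both sides of the claimed identity are polynomials (resp.\ will become power series) with constant term $1$, it is equivalent to prove
\[ \prod_{\phi} \frac{1}{1-u^{d(\phi)}} = \frac{1}{1-uq}, \]
the product being over all monic irreducible $\phi \in \mathbb{F}_q[x]$. First I would note that for each fixed $n$ there are only finitely many monic irreducibles $\phi$ with $d(\phi) \le n$, so this infinite product is a well-defined formal power series in $u$, and it suffices to check that the coefficient of $u^n$ agrees on both sides for every $n \ge 0$.

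Next I would expand each factor geometrically, $\frac{1}{1-u^{d(\phi)}} = \sum_{k \ge 0} u^{k\,d(\phi)}$, so that the coefficient of $u^n$ in the product counts the number of families of nonnegative integers $(k_\phi)_\phi$, almost all zero, with $\sum_\phi k_\phi\, d(\phi) = n$; equivalently, it counts monic polynomials of the form $\prod_\phi \phi^{k_\phi}$ of degree $n$. By unique factorization in $\mathbb{F}_q[x]$, this is a bijection onto the set of all monic polynomials of degree $n$, of which there are exactly $q^n$ (the $n$ lower coefficients are free). Hence the product equals $\sum_{n \ge 0} q^n u^n = \frac{1}{1-uq}$, and inverting both sides yields $\prod_\phi (1-u^{d(\phi)}) = 1-uq$, as desired.

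I do not anticipate any real obstacle here: the only subtlety is the justification that the infinite product and the geometric expansions may be combined termwise, and that is exactly the local-finiteness remark above, since each coefficient of $u^n$ is determined by the finitely many factors with $d(\phi) \le n$. An alternative route would be to use the explicit count $N_d = \tfrac1d \sum_{e \mid d} \mu(e)\, q^{d/e}$ of monic irreducibles of degree $d$ and a Möbius-type identity, but the unique-factorization argument is shorter and requires no number-theoretic input beyond counting monic polynomials.
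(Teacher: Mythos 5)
Your proposal is correct and follows essentially the same route as the paper: pass to the reciprocal, use unique factorization in $\mathbb{F}_q[x]$ to identify the coefficient of $u^n$ with the count $q^n$ of monic degree-$n$ polynomials, and recognize $\sum q^n u^n = (1-qu)^{-1}$. Your added remarks on local finiteness of the product are sound but not strictly necessary.
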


\begin{proof} By unique factorization in $\mathbb{F}_q[x]$, the coefficient of $u^n$ in the reciprocal of the
left hand side is the number of degree $n$ monic polynomials with coefficients in $\mathbb{F}_q$. This is
equal to $q^n$, which is the coefficient of $u^n$ in $(1-uq)^{-1}$.
\end{proof}

We let $\Mat(n,q)$ denote the set of $n \times n$ matrices (not necessarily invertible) over the
field $\mathbb{F}_q$. Recall the rational canonical form of an element of $\Mat(n,q)$. This
associates to each monic, irreducible polynomial $\phi$ over $\mathbb{F}_q$ a partition $\lambda_{\phi}$
such that \[ \sum_{\phi} d(\phi) |\lambda_{\phi}| = n .\] For further background on rational
canonical forms, one can consult Chapter 6 of \cite{H}.

Lemma \ref{jordan} is proved in Stong \cite{S}, and calculates the number of elements of $\Mat(n,q)$
with given rational canonical form.

\begin{lemma} \label{jordan} Suppose that
\[ \sum_{\phi} d(\phi) |\lambda_{\phi}| = n, \] so that $\{ \lambda_{\phi} \}$ is a possible rational
canonical form of an element of $\Mat(n,q)$. Then the number of elements of $\Mat(n,q)$ with rational
canonical form  $\{ \lambda_{\phi} \}$ is equal to
\[ \frac{|\GL(n,q)|}{\prod_{\phi} q^{d(\phi) \sum_i (\lambda_{\phi,i}')^2}
\prod_i (1/q^{d(\phi)})_{m_i(\lambda_{\phi})}} .\]
(Recall that $\lambda_{\phi,i}'$ is the size of the $i$th column of the partition $\lambda_{\phi}$).
\end{lemma}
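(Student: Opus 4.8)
The plan is to identify the count as the size of a $\GL(n,q)$-conjugacy orbit and then compute the order of a centralizer. Since $\GL(n,q)$ acts on $\Mat(n,q)$ by conjugation and, by the theory of rational canonical form, two matrices lie in the same orbit precisely when they determine the same data $\{\lambda_\phi\}$, the number of matrices with a prescribed rational canonical form equals $|\GL(n,q)|/|C|$, where $C = C_{\GL(n,q)}(A)$ is the centralizer of any matrix $A$ in that class. Thus the statement reduces to showing
\[ |C_{\GL(n,q)}(A)| = \prod_{\phi} q^{d(\phi)\sum_i(\lambda_{\phi,i}')^2}\prod_i (1/q^{d(\phi)})_{m_i(\lambda_\phi)} .\]

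First I would translate the centralizer into module-theoretic language. Regard $V = \mathbb{F}_q^n$ as a module over $\mathbb{F}_q[x]$ with $x$ acting as $A$; then $C_{\GL(n,q)}(A) = \mathrm{Aut}_{\mathbb{F}_q[x]}(V)$. The primary decomposition $V = \bigoplus_\phi V_\phi$, with $V_\phi$ the $\phi$-primary component, is preserved by every $\mathbb{F}_q[x]$-endomorphism, so $\mathrm{Aut}_{\mathbb{F}_q[x]}(V) = \prod_\phi \mathrm{Aut}_{\mathbb{F}_q[x]}(V_\phi)$. Each $V_\phi$ is naturally a finite-length module over the discrete valuation ring $\mathbb{F}_q[x]_{(\phi)}$, whose residue field has size $Q = q^{d(\phi)}$, and by the structure theorem $V_\phi \cong \bigoplus_i (\mathbb{F}_q[x]/(\phi^i))^{m_i(\lambda_\phi)}$, i.e. $V_\phi$ has type $\lambda_\phi$. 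So the problem reduces to the purely local statement: for a discrete valuation ring $R$ with residue field of size $Q$ and a finite $R$-module $M$ of type $\mu$, one has $|\mathrm{Aut}_R(M)| = Q^{\sum_i(\mu_i')^2}\prod_i(1/Q)_{m_i(\mu)}$; one then takes the product over $\phi$ with $Q = q^{d(\phi)}$ and $\mu = \lambda_\phi$.

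To prove this local formula, I would first compute $|\mathrm{End}_R(M)|$. Writing $M = \bigoplus_i (R/\mathfrak{p}^i)^{m_i}$ and using $|\mathrm{Hom}_R(R/\mathfrak{p}^a, R/\mathfrak{p}^b)| = Q^{\min(a,b)}$ gives $|\mathrm{End}_R(M)| = Q^{\sum_{a,b}m_a m_b\min(a,b)} = Q^{\sum_i(\mu_i')^2}$, the last step being the identity $\sum_k(\mu_k')^2 = \sum_{a,b}m_a m_b\min(a,b)$. Next I would pass to the residue field: an endomorphism $f$ of $M$ is an automorphism iff it is surjective iff, by Nakayama's lemma, its reduction $\bar f$ on $M/\mathfrak{p}M \cong \bigoplus_i \mathbb{F}_Q^{m_i}$ is bijective. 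A short computation with $\mathrm{Hom}_R(R/\mathfrak{p}^a, R/\mathfrak{p}^b)$ shows that the image of the reduction map $\mathrm{End}_R(M)\to \mathrm{End}_{\mathbb{F}_Q}(M/\mathfrak{p}M)$ is exactly the space of block-triangular maps, where the block from the index-$a$ summand to the index-$b$ summand survives only when $a \ge b$, and such a block-triangular map is invertible iff each of its diagonal blocks $\mathbb{F}_Q^{m_i}\to\mathbb{F}_Q^{m_i}$ is invertible. Hence the proportion of elements of this image that are invertible is $\prod_i |\GL(m_i,\mathbb{F}_Q)|/Q^{m_i^2} = \prod_i(1/Q)_{m_i}$, and since the reduction map has fibers of constant size, $|\mathrm{Aut}_R(M)| = |\mathrm{End}_R(M)|\cdot\prod_i(1/Q)_{m_i}$, as claimed.

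I expect the main obstacle to be precisely the middle step of this last paragraph: pinning down the image of $\mathrm{End}_R(M)$ inside $\mathrm{End}_{\mathbb{F}_Q}(M/\mathfrak{p}M)$ and the correct invertibility criterion there. The orbit–stabilizer reduction and the primary decomposition are routine, and the computation of $|\mathrm{End}_R(M)|$ is bookkeeping; it is the reduction-mod-$\mathfrak{p}$ analysis, together with the observation that any lift of a bijection is automatically bijective, that carries the real content. This local automorphism count is classical; it appears, for example, in Macdonald's treatment of Hall polynomials and in Stong \cite{S}.
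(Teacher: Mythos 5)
Your proof is correct. The paper itself does not prove Lemma~\ref{jordan} --- it simply cites Stong~\cite{S} --- so you are supplying an argument the authors omit. It is worth comparing your argument with the paper's proof of the companion Lemma~\ref{comwith}, which is the one result in this direction the paper does prove. There the authors compute $\dim \operatorname{Hom}_{k[x]}(V_i,V_j)=\min(\lambda_i,\lambda_j)$ for cyclic modules and use the identity $\sum_{i,j}\min(\lambda_i,\lambda_j)=\sum_i(\lambda_i')^2$; that is precisely your computation of $|\operatorname{End}_R(M)|=Q^{\sum_i(\mu_i')^2}$, just phrased locally over the DVR $\mathbb{F}_q[x]_{(\phi)}$. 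The genuinely new step in your write-up is the passage from $\operatorname{End}$ to $\operatorname{Aut}$: reducing mod the maximal ideal, identifying the image in $\operatorname{End}_{\mathbb{F}_Q}(M/\mathfrak{p}M)$ as the block-triangular algebra (the $(a,b)$-block from $M_a$ to $M_b$ survives exactly when $a\ge b$, since $\operatorname{Hom}_R(R/\mathfrak{p}^a,R/\mathfrak{p}^b)$ reduces to $0$ when $a<b$), applying Nakayama together with finiteness of $M$ to see that $f$ is an automorphism iff $\bar f$ is invertible, and then counting the invertible block-triangular maps to extract the factor $\prod_i(1/Q)_{m_i}$. All of these steps are sound, and the identity $\sum_{a,b}m_am_b\min(a,b)=\sum_k(\mu_k')^2$ you invoke is the same one used in the paper. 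In short: your proof is a correct, self-contained derivation of a fact the paper outsources to a reference, and it has the pleasant feature of simultaneously proving Lemma~\ref{comwith} as a by-product.
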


Lemma \ref{comwith} counts the number of elements of $\Mat(n,q)$ which commute with a given element
of $\Mat(n,q)$ which has rational canonical form $\{ \lambda_{\phi} \}$.

\begin{lemma} \label{comwith} Let $x$ be an element of $\Mat(n,q)$ with rational canonical form
$\{ \lambda_{\phi} \}$. Then the number of elements of $\Mat(n,q)$ which commute with $x$ is equal to
\[ \prod_{\phi} q^{d(\phi) \sum_i (\lambda_{\phi,i}')^2} .\]
\end{lemma}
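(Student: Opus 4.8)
The plan is to reduce to the case of a single polynomial $\phi$ via the primary decomposition, and then to reduce the single-polynomial case to a count over an extension field. First I would observe that if $x$ has rational canonical form $\{\lambda_\phi\}$, then the ambient space $\mathbb{F}_q^n$ decomposes as a direct sum of $x$-invariant subspaces $V_\phi$, one for each $\phi$ with $\lambda_\phi \neq \emptyset$, where $V_\phi$ is the generalized eigenspace (the kernel of $\phi(x)^N$ for $N$ large). Any matrix commuting with $x$ must preserve each $V_\phi$, and conversely a block-diagonal matrix preserving this decomposition and commuting with $x|_{V_\phi}$ on each block commutes with $x$. Hence the centralizer algebra of $x$ in $\Mat(n,q)$ is the direct product, over $\phi$, of the centralizer algebras of $x|_{V_\phi}$, and its cardinality is the product of the cardinalities of these factors. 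This reduces the problem to showing that when $x$ is a single primary block associated to $\phi$ with partition $\lambda = \lambda_\phi$, the centralizer has size $q^{d(\phi)\sum_i (\lambda_i')^2}$.

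For the single-polynomial case I would pass to the field $K = \mathbb{F}_q[x]/(\phi)$, which has $q^{d(\phi)}$ elements. The module $V_\phi$, viewed as a module over $\mathbb{F}_q[x]$, is a module over the local ring $\mathbb{F}_q[x]_{(\phi)}$, and its structure is governed by the partition $\lambda$: it is isomorphic to $\bigoplus_i (\mathbb{F}_q[x]/\phi^{\lambda_i})$ as an $\mathbb{F}_q[x]$-module. The centralizer of $x$ in $\Mat(n,q)$ is exactly the endomorphism ring of this $\mathbb{F}_q[x]$-module. Now I would invoke the standard computation of the endomorphism ring of a finite module over a discrete valuation ring (or its completion): for a module $\bigoplus_i R/\mathfrak{m}^{\lambda_i}$ over a DVR with residue field of size $Q$, the endomorphism ring has cardinality $Q^{\sum_i (\lambda_i')^2}$ — equivalently $Q^{\sum_i (2i-1)\lambda_i}$ after rewriting, where the exponent $\sum_i (\lambda_i')^2$ counts, with the appropriate multiplicities, the dimensions of the Hom-spaces $\mathrm{Hom}(R/\mathfrak{m}^{\lambda_i}, R/\mathfrak{m}^{\lambda_j})$, each of which has $R/\mathfrak{m}$-dimension $\min(\lambda_i,\lambda_j)$. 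Since $\sum_{i,j}\min(\lambda_i,\lambda_j) = \sum_i (\lambda_i')^2$ is a standard partition identity, summing over the blocks $i,j$ and taking $Q = q^{d(\phi)}$ gives the claimed factor $q^{d(\phi)\sum_i (\lambda_i')^2}$.

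Assembling the two reductions, the total centralizer has cardinality $\prod_\phi q^{d(\phi)\sum_i (\lambda_{\phi,i}')^2}$, which is the assertion. The main obstacle is the second step: one must carefully justify that the centralizer of a primary block really is the full endomorphism ring of the associated torsion module over the DVR $\mathbb{F}_q[x]_{(\phi)}$ (as opposed to something smaller), and then correctly evaluate that endomorphism ring's size — in particular getting the exponent in the form $\sum_i (\lambda_i')^2$ via the identity $\sum_{i,j}\min(\lambda_i,\lambda_j) = \sum_k (\lambda_k')^2$. An alternative and perhaps cleaner route that avoids reproving this module-theoretic fact is to combine Lemma \ref{jordan} with Lemma \ref{euler}-type bookkeeping, or better still to simply divide: the number of elements with a fixed rational canonical form $\{\lambda_\phi\}$ (Lemma \ref{jordan}) equals $|\GL(n,q)|$ divided by the order of the centralizer in $\GL(n,q)$, and a parallel known formula for the centralizer order in the group (its order is $\prod_\phi q^{d(\phi)\sum_i(\lambda_{\phi,i}')^2}\prod_i (1/q^{d(\phi)})_{m_i(\lambda_\phi)}$ up to the unipotent-versus-all distinction) shows that the full matrix centralizer is obtained from the group centralizer by dropping exactly the $\prod_i (1/q^{d(\phi)})_{m_i(\lambda_\phi)}$ factor. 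I would present the direct module-theoretic argument as the main proof, since it is self-contained and conceptually transparent.
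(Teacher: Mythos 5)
Your proposal is correct and follows essentially the same approach as the paper: both reduce to the primary ($\phi$-local) case, identify the centralizer with the endomorphism ring of a module which is a direct sum of cyclic pieces, use $\dim\mathrm{Hom}(V_i,V_j)=\min(\lambda_i,\lambda_j)$, and finish with the identity $\sum_{i,j}\min(\lambda_i,\lambda_j)=\sum_k(\lambda_k')^2$. The only cosmetic difference is that the paper passes to a splitting field to reduce to $\phi=x$ of degree one, whereas you work directly over the local ring $\mathbb{F}_q[x]_{(\phi)}$ with residue field of size $q^{d(\phi)}$; these two bookkeeping devices produce the same count.
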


\begin{proof}   Note that the exponent in the statement of the result is precisely the dimension of the centralizer
of $x$ (either in $\Mat(n,q)$ or $\Mat(n,k)$ where $k$ is the algebraic closure  of our finite field).    Note that the
centralizer breaks up into the product of the centralizers corresponding to each irreducible polynomial $\phi$.
By passing to a splitting field for $\phi$, it suffices to assume that $\phi$ has degree $1$ and indeed
that $\phi = x$.

Now we can decompose the underlying space $V$ into a direct sum $V_1 \oplus \ldots \oplus V_m$
where $x$ acts via a single Jordan block on $V_i$ for size $\lambda_i$.   Then the dimension of
the centralizer is the sum of the dimensions of of $Hom_{k[x]}(V_i, V_j)$.   Since $V_j$ and $V_j$
are cyclic $k[x]$-modules, this dimension is  the minimum of $\lambda_i, \lambda_j$. Clearly
\[ \sum_{i,j \geq 1} min(\lambda_i,\lambda_j) =
2 \sum_{k<l} k m_k(\lambda) m_l(\lambda) + \sum_i i m_i(\lambda)^2 .\] This is equal to
$\sum_i (\lambda_i')^2$, as can be seen by writing
\[ \lambda_i' = m_i(\lambda) + m_{i+1}(\lambda) + \cdots .\]
\end{proof}

Now, we reprove the Feit-Fine theorem.

\begin{proof} We let $[u^n] f(u)$ denote the coefficient of $u^n$ in an expression $f(u)$.
It follows from Lemmas \ref{jordan} and \ref{comwith} that $G_n$ (the number of
ordered pairs of commuting elements of $\Mat(n,q)$) is equal to
\[ |\GL(n,q)| [u^n] \prod_{\phi} \sum_{\lambda} \frac{u^{|\lambda| d(\phi)}}
{\prod_i (1/q^{d(\phi)})_{m_i(\lambda)}}, \] where the product
is over all monic irreducible polynomials $\phi$ over $\mathbb{F}_q$. This is equal to
\[ |\GL(n,q)| [u^n] \prod_{\phi} \prod_{i \geq 1} \sum_{m \geq 0} \frac{u^{im \cdot d(\phi)}}{(1/q^{d(\phi)})_m} .\]
By Lemma \ref{euler}, this is equal to
\[ |\GL(n,q)| [u^n] \prod_{\phi} \prod_{i \geq 1} \prod_{l \geq 0} \frac{1}{1-u^{i \cdot d(\phi)}/q^{l \cdot d(\phi)}} .\]
Switching the order of the products and applying Lemma \ref{poly} shows that this is equal to
\begin{eqnarray*}
& & |\GL(n,q)| [u^n] \prod_{i \geq 1} \prod_{l \geq 0} \prod_{\phi} \frac{1}{1-u^{i \cdot d(\phi)}/q^{l \cdot d(\phi)}} \\
& = & |\GL(n,q)| [u^n] \prod_{i \geq 1} \prod_{l \geq 0} \frac{1}{1-u^i/q^{l-1}}.
\end{eqnarray*} This completes the proof.
\end{proof}

Next we derive asymptotic information from the Feit-Fine generating function.

Given a power series $f(u)$, let $[u^n] f(u)$ denote the coefficient of $u^n$ in $f(u)$.
The following lemma will be useful.

\begin{lemma} \label{tay} If the Taylor series of $f$ around $0$ converges at $u=1$, then \[ \lim_{n \rightarrow \infty} [u^n] \frac{f(u)}{1-u} = f(1) .\]
\end{lemma}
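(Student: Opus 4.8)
The plan is to prove this by a standard Abel/Tauberian-type argument, treating it as a statement about Cesàro-type summation. Write $f(u) = \sum_{k \geq 0} a_k u^k$, so that the hypothesis is that $\sum_{k \geq 0} a_k$ converges to $f(1)$. Since $\frac{1}{1-u} = \sum_{m \geq 0} u^m$, the product $\frac{f(u)}{1-u}$ has $u^n$-coefficient equal to the partial sum $s_n := \sum_{k=0}^{n} a_k$. Thus the claim $\lim_{n \to \infty} [u^n]\frac{f(u)}{1-u} = f(1)$ is literally the assertion that $s_n \to f(1)$, which is precisely the meaning of convergence of the Taylor series at $u = 1$ together with the fact that the sum equals $f(1)$.

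So the only genuine content is the identification $f(1) = \sum_{k} a_k$, i.e.\ that when the series $\sum a_k$ converges, its value coincides with the value of the analytic function $f$ at $u=1$. First I would note that convergence of $\sum a_k$ forces the radius of convergence of $f$ to be at least $1$, so $f$ is analytic on the open unit disk. Then I would invoke Abel's theorem on power series: if $\sum a_k$ converges, then $\lim_{u \to 1^-} f(u) = \sum_{k} a_k$. Combined with the fact that $f$, being given by its Taylor series around $0$ that converges at $u=1$, is continuous (indeed, Abel-continuous) up to $u=1$ along the real axis, we get $f(1) = \sum_k a_k = \lim_n s_n = \lim_n [u^n]\frac{f(u)}{1-u}$.

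The main obstacle, such as it is, is purely a matter of bookkeeping about what ``$f(1)$'' means: one must be careful that $f(1)$ is interpreted as the value obtained by summing the Taylor coefficients (equivalently, the radial limit), rather than as an a priori unrelated evaluation. Once that is pinned down, Abel's theorem does all the work and there is nothing further to check. In the application to the Feit--Fine asymptotics, $f(u)$ will be the infinite product $\prod_{i \geq 1}\prod_{l \geq 0}(1 - u^i/q^{l-1})^{-1}$ divided by the single offending factor coming from $i=1, l=0$ (namely $1/(1-uq)$ would be problematic, but after the substitution it is the $u/q^{-1}$ term; one rescales to isolate a $\frac{1}{1-u}$ factor), so one needs only that the remaining product converges at $u=1$, which holds for $q > 1$ since the exponents decay geometrically. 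That verification is routine and I would carry it out at the point of use rather than here.
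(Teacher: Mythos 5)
Your core argument coincides with the paper's: expand $f(u)=\sum_k a_k u^k$, note that $[u^n]\,\frac{f(u)}{1-u}=\sum_{k=0}^n a_k$, and conclude since the partial sums converge to $f(1)$. The Abel's-theorem digression is extra: the paper simply reads $f(1)$ as the limit of the partial sums, so no appeal to Abel continuity is needed for the lemma itself (the identification of this limit with the value of the explicit product at $u=1$ is a routine check deferred to the point of application, as you note).
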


\begin{proof} Write the Taylor expansion $f(u)=\sum_{n=0}^{\infty} a_n u^n$. Then observe that $[u^n] \frac{f(u)}{1-u} = \sum_{i=0}^n a_i$.
\end{proof}

\begin{theorem} For $q$ fixed,
\[ \lim_{n \rightarrow \infty} \frac{G_n}{q^{n^2+n}} = \prod_{j \geq 1} (1-1/q^j)^{-j} .\]
\end{theorem}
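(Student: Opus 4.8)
The plan is to extract the growth of $G_n$ from the dominant singularity of the Feit--Fine generating function and then invoke Lemma \ref{tay}. Set $f(u) = \sum_{n\geq 0} G_n u^n/|\GL(n,q)|$, which by Theorem \ref{ff} equals $\prod_{i\geq 1}\prod_{l\geq 0}(1-u^i/q^{l-1})^{-1}$. The factor with $(i,l)=(1,0)$ is $(1-uq)^{-1}$, and every other index pair satisfies $i+l\geq 2$; so I would write
\[ f(u) = \frac{g(u)}{1-uq}, \qquad g(u) := \prod_{\substack{i\geq 1,\ l\geq 0\\ (i,l)\neq(1,0)}}\frac{1}{1-u^i/q^{l-1}}. \]
After the substitution $u=v/q$, the factor for $(i,l)$ becomes $(1-v^iq^{1-i-l})^{-1}$ with exponent $1-i-l\leq-1$, so $g(v/q)$ is a power series in $v$ with non-negative coefficients.

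Next I would verify that the Taylor series of $g(v/q)$ converges at $v=1$, the hypothesis needed to apply Lemma \ref{tay}. Grouping the index pairs by $m=i+l$ — for each $m\geq 2$ there are exactly $m$ pairs $(i,l)$ with $i\geq 1,\ l\geq 0,\ i+l=m$ — the value of the product at $v=1$ is $\prod_{m\geq 2}(1-q^{1-m})^{-m}$, which converges since $\sum_{m\geq2}m\,q^{1-m}<\infty$. Because every factor of $g(v/q)$ has non-negative coefficients and finite value at $v=1$, a monotone-convergence argument identifies the sum of the Taylor series of $g(v/q)$ with this finite product; in particular the series converges at $v=1$ to $g(1/q)$.

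Granting this, I would write
\[ \frac{G_n}{|\GL(n,q)|} = [u^n]\frac{g(u)}{1-uq} = q^n\,[v^n]\frac{g(v/q)}{1-v}, \]
and apply Lemma \ref{tay} to the function $v\mapsto g(v/q)$ to conclude $\lim_{n\to\infty} q^{-n}G_n/|\GL(n,q)| = g(1/q)$. Since $|\GL(n,q)| = q^{n^2}(1/q)_n$ and $(1/q)_n\to\prod_{j\geq1}(1-1/q^j)$, this yields
\[ \lim_{n\to\infty}\frac{G_n}{q^{n^2+n}} = g(1/q)\prod_{j\geq1}\Bigl(1-\frac{1}{q^j}\Bigr). \]
Finally, from the grouping above $g(1/q)=\prod_{m\geq2}(1-q^{1-m})^{-m}=\prod_{j\geq1}(1-1/q^j)^{-(j+1)}$ after setting $j=m-1$, and multiplying by $\prod_{j\geq1}(1-1/q^j)$ collapses the exponent from $-(j+1)$ to $-j$, which is the claimed formula.

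The step needing the most care is the convergence of the Taylor series of $g(v/q)$ at $v=1$: one needs not merely that the product of the factor-values converges but that the coefficients themselves are summable, and this is exactly what the non-negativity of all the coefficients (coming from $i+l\geq2\Rightarrow q^{1-i-l}\leq1/q<1$) provides. The remaining steps are routine bookkeeping with Lemma \ref{tay}, the identity $|\GL(n,q)| = q^{n^2}(1/q)_n$, and the reindexing $m=i+l$.
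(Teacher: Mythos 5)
Your proposal is correct and takes essentially the same route as the paper: rescale $u\mapsto u/q$, factor out the $1/(1-u)$ coming from the $(i,l)=(1,0)$ term, apply Lemma \ref{tay}, and combine with $(1/q)_n\to\prod_{j\geq 1}(1-1/q^j)$ from $|\GL(n,q)|=q^{n^2}(1/q)_n$. The only difference is cosmetic: you explicitly verify the convergence hypothesis of Lemma \ref{tay} (via the reindexing $m=i+l$ and nonnegativity of coefficients), a point the paper leaves implicit.
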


\begin{proof} We know that
\begin{eqnarray*}
G_n & = & |\GL(n,q)| [u^n] \prod_{i \geq 1} \prod_{l \geq 0} \frac{1}{1-u^i/q^{l-1}} \\
& = & |\GL(n,q)| q^n [u^n] \prod_{i \geq 1} \prod_{l \geq 0} \frac{1}{1-u^i/q^{i+l-1}} \\
& = & q^{n^2+n} (1/q)_n [u^n] \prod_{i \geq 1} \prod_{l \geq 0} \frac{1}{1-u^i/q^{i+l-1}}.
\end{eqnarray*}

Thus \[ \lim_{n \rightarrow \infty} \frac{G_n}{q^{n^2+n}} = \prod_{j \geq 1} (1-1/q^j)
\lim_{n \rightarrow \infty} [u^n] \prod_{i \geq 1} \prod_{l \geq 0} \frac{1}{1-u^i/q^{i+l-1}} .\] Since the $i=1,l=0$
case of $1/(1-u^i/q^{i+l-1})$ is equal to $1/(1-u)$, it follows from Lemma \ref{tay}
that \[ \lim_{n \rightarrow \infty} [u^n] \prod_{i \geq 1} \prod_{l \geq 0} \frac{1}{1-u^i/q^{i+l-1}}
= \prod_{j \geq 1} (1-1/q^j)^{-j-1}, \] which implies the theorem.
\end{proof}

{\it Remark:} Let $\Com(G)$ denote the number of ordered pairs of commuting elements of a finite group $G$. It is
easy to see that $\Com(G)$ is equal to $k(G)|G|$, where $k(G)$ is the number of conjugacy classes of $G$. From
\cite{FG}, $\lim_{n \rightarrow \infty} \frac{k(\GL(n,q))}{q^n} = 1$. Thus,
\[ \lim_{n \rightarrow \infty} \frac{\Com(\GL(n,q))}{q^{n^2+n}} = \lim_{n \rightarrow \infty} \frac{|\GL(n,q)|}{q^{n^2}} =
\prod_{j \geq 1} (1-1/q^j) .\]

To close this section, we enumerate pairs of commuting nilpotent $n \times n$ matrices. Let $NG_0=1$, and for $n \geq 1$, let $NG_n$ be the number of ordered pairs of commuting $n \times n$ nilpotent matrices over $\mathbb{F}_q$.

\begin{theorem} \label{nilGL}
\[ \sum_{n \geq 0} \frac{NG_n u^n}{|\GL(n,q)|} = \prod_{i \geq 1} \prod_{l \geq 1} \frac{1}{1-u^i/q^l} .\]
\end{theorem}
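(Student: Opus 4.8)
The plan is to mimic the proof of the Feit--Fine theorem, restricting attention to the nilpotent elements of $\Mat(n,q)$. First I would recall that a matrix $x \in \Mat(n,q)$ is nilpotent precisely when its only nonzero generalized eigenvalue is $0$, i.e. when its rational canonical form $\{\lambda_\phi\}$ is concentrated on the single polynomial $\phi = x$ (so $\lambda_\phi$ is the empty partition for all $\phi \neq x$), with $|\lambda_x| = n$. Thus nilpotent conjugacy classes in $\Mat(n,q)$ are indexed by partitions $\lambda$ of $n$. Combining Lemma \ref{jordan} (specialized to this single polynomial $\phi=x$ of degree $1$) with Lemma \ref{comwith} (likewise specialized), the number of ordered pairs $(x,y)$ with $x$ nilpotent of type $\lambda$ and $y$ commuting with $x$ is
\[ \frac{|\GL(n,q)|}{q^{\sum_i (\lambda_i')^2} \prod_i (1/q)_{m_i(\lambda)}} \cdot q^{\sum_i (\lambda_i')^2} = \frac{|\GL(n,q)|}{\prod_i (1/q)_{m_i(\lambda)}}. \]
Wait --- but this counts all $y$ commuting with $x$, not just nilpotent $y$. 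So the key additional observation is: if $x$ is nilpotent and $y$ commutes with $x$, then $y$ need not be nilpotent, so I cannot simply sum this over $\lambda$. Instead I must be more careful.

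The correct approach uses the following fact: for a nilpotent $x$, the centralizer $C(x)$ in $\Mat(n,q)$ is an algebra, and the nilpotent elements of $\Mat(n,q)$ commuting with $x$ are exactly the nilpotent elements of the algebra $C(x)$. So I would first count, for fixed nilpotent $x$ of type $\lambda$, the number of nilpotent $y$ in $C(x)$. A clean way to do this is to use the Fine--Herstein-type result (or a direct argument) that the proportion of nilpotent elements in the centralizer algebra $C(x)$ equals $q^{-d}$ where $d = \dim C(x) - (\text{something})$; more precisely, Steinberg's theorem / a counting argument gives that the number of nilpotent elements in any $\mathbb{F}_q$-algebra arising as a centralizer behaves predictably. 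Actually the cleanest route: the number of nilpotent elements in $\Mat(n,q)$ is $q^{n^2-n}$ (this is classical, due to Fine and Herstein), and more generally one wants the analogous count inside $C(x)$. Rather than chase this, I would instead set up the generating function directly by the same $\phi$-by-$\phi$ factorization: the number $NG_n$ equals
\[ |\GL(n,q)| [u^n] \sum_{\lambda} \frac{u^{|\lambda|}}{\prod_i (1/q)_{m_i(\lambda)}} \cdot (\text{number of nilpotent } y \in C(x_\lambda)) \cdot q^{-\sum_i(\lambda_i')^2}, \]
so I need the count of nilpotent elements in $C(x_\lambda)$.

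The key step I expect to carry out is: the number of nilpotent elements of $C(x_\lambda)$ is $q^{\sum_i (\lambda_i')^2 - \sum_i \lambda_i'} = q^{\sum_i (\lambda_i')^2}/q^{|\lambda|}$ (since $\sum_i \lambda_i' = |\lambda|$), by the general principle that in a centralizer algebra the nilpotents form a subvariety of codimension equal to the "rank'', together with a point count. Granting this, the summand becomes $u^{|\lambda|} q^{-|\lambda|} / \prod_i (1/q)_{m_i(\lambda)}$, and then exactly as in the Feit--Fine proof I would write $\sum_\lambda$ as $\prod_{i \geq 1} \sum_{m \geq 0} (u/q)^{im}/(1/q)_m$, apply Lemma \ref{euler} with $u$ replaced by $u/q$ to get $\prod_{i\geq 1}\prod_{l \geq 0} \frac{1}{1 - (u/q)^i/q^{li}}$... and here I would need to be careful about how the $q$-shift interacts. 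More directly: with the substitution $u \mapsto u/q$ inside each factor indexed by $i$, Lemma \ref{euler} gives $\prod_{l\geq 0}\frac{1}{1-(u/q)^i/q^{li}}$; hmm, the exponents of $q$ in the denominator should come out to $1,2,3,\dots$ Let me instead just say: applying Lemma \ref{euler} (with argument $u^i/q^i$) yields $\prod_{i\ge 1}\prod_{l\ge 0}\frac1{1-u^i/q^{i}\cdot q^{-li}}$ --- this isn't producing $q^l$ in the end. The honest statement of the plan: I would verify that after the substitution the product becomes $\prod_{i\geq 1}\prod_{l\geq 1}\frac{1}{1-u^i/q^l}$, i.e. the single-$\phi$ analog of the Feit--Fine computation with the range of $l$ shifted from $l\geq 0$ to $l \geq 1$ because of the extra factor $q^{-|\lambda|}$; there is no longer a product over all $\phi$ (hence no use of Lemma \ref{poly}), since nilpotency pins $\phi = x$. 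The main obstacle is precisely establishing the count of nilpotent elements in the centralizer algebra $C(x_\lambda)$ --- equivalently, showing the proportion of nilpotents there is $q^{-\sum_i \lambda_i'} = q^{-|\lambda|}$; this can be done by induction on the Jordan block structure, or by invoking the fact that the nilpotent cone of an algebra of the form $\prod \Mat(n_i, R_i)$-type centralizer has the expected codimension, but it requires some genuine work with the structure of $C(x_\lambda)$ (which is a "staircase'' algebra built from the partition $\lambda$).
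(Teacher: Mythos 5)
Your overall strategy matches the paper's: reduce to counting, for each partition $\lambda$ of $n$, the number of nilpotent elements in the centralizer algebra $C(x_\lambda)$ of a nilpotent $x_\lambda$ of Jordan type $\lambda$, then feed the result into the same generating-function machinery (Lemma \ref{euler}, no need for Lemma \ref{poly}). However, the crucial formula you posit for that count is wrong, and you yourself notice that the downstream computation then fails to produce the stated product.

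You claim the number of nilpotent elements of $C(x_\lambda)$ is $q^{\sum_i (\lambda_i')^2 - \sum_i \lambda_i'} = q^{\sum_i(\lambda_i')^2 - |\lambda|}$, i.e. that the relevant ``rank'' is $\sum_i \lambda_i' = |\lambda|$. The correct exponent is $\sum_i (\lambda_i')^2 - \sum_i m_i(\lambda)$, where $\sum_i m_i(\lambda)$ is the \emph{number of parts} of $\lambda$ (equivalently $\lambda_1'$), not $|\lambda|$. Already for $\lambda=(2)$ the difference matters: the centralizer of a single $2\times 2$ Jordan block is $\{aI+bJ\}$, of which exactly $q$ elements (those with $a=0$) are nilpotent, matching $q^{(1^2+1^2)-1}=q$ and not $q^{2-2}=1$. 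The reason is that $C(x_\lambda)$ modulo its nilpotent radical is $\Mat(m_1,q)\times\cdots\times\Mat(m_r,q)$, and by the Fine--Herstein count the number of nilpotents in $\Mat(m,q)$ is $q^{m^2-m}$; assembling this gives $q^{\sum_i(\lambda_i')^2-\sum_i m_i(\lambda)}$. That is exactly the content of the paper's Lemma \ref{countnilcent}. With the correct exponent, the summand for fixed $i$ is $\sum_{m\ge 0} u^{im}/\bigl((1/q)_m\,q^m\bigr)$, and replacing $u$ by $u^i/q$ in Lemma \ref{euler} cleanly yields $\prod_{l\ge 1}(1-u^i/q^l)^{-1}$, rather than the $\prod_{l\ge i}$-type expression your substitution was producing. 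So the gap is twofold: the key counting lemma is misstated, and it is invoked as a ``general principle'' rather than proved; the paper supplies both the correct statement and a short proof via the structure of $C(x_\lambda)$ modulo its nilpotent radical.
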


The next result gives the number of nilpotent elements commuting with a given nilpotent element.

\begin{lemma} \label{countnilcent}
The number of nilpotent $n \times n$ matrices that commute with a fixed nilpotent matrix of type $\lambda$ is equal to
\[ q^{\sum_i (\lambda_i')^2 - \sum_i m_i(\lambda)} .\]
\end{lemma}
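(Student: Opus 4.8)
The plan is to compute the number of nilpotent matrices commuting with a fixed nilpotent $x$ of type $\lambda$ by combining two facts we already have available. First, by Lemma \ref{comwith} (applied with the single polynomial $\phi = x$, since $x$ is nilpotent), the \emph{total} number of matrices commuting with $x$ is $q^{\sum_i (\lambda_i')^2}$; this is the order of the centralizer $C = C_{\Mat(n,q)}(x)$, which we recall is an associative $\mathbb{F}_q$-algebra. So it suffices to show that exactly a $q^{-\sum_i m_i(\lambda)}$ fraction of the elements of $C$ are nilpotent. The natural mechanism for this is to pass to the quotient of $C$ by its Jacobson radical $J$: an element of $C$ is nilpotent if and only if its image in $C/J$ is nilpotent, and since $C/J$ is a product of matrix algebras over $\mathbb{F}_q$ (the endomorphism algebra of the semisimplification), I would count nilpotents there and multiply by $|J|$.

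Concretely, I would use the well-known structure of the centralizer algebra of a nilpotent Jordan form. Writing the underlying space as $\bigoplus_i (\text{copies of a single Jordan block of size } i)$, with $m_i = m_i(\lambda)$ blocks of size $i$, the algebra $C$ has $C/J \cong \prod_i \Mat(m_i, \mathbb{F}_q)$ — one matrix factor for each distinct part size $i$, of degree equal to the multiplicity $m_i$. The first step, then, is to establish (or quote) this structural description: a homomorphism from $C$ onto $\prod_i \Mat(m_i,\mathbb{F}_q)$ can be built by sending an endomorphism to the induced map on $\mathrm{soc}/\mathrm{rad}$ of each isotypic piece, or equivalently via the ``leading coefficient'' of the block-matrix description of centralizing maps; its kernel $J$ is nilpotent, hence is the Jacobson radical.

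The second step is the elementary counting inside the semisimple quotient. The number of nilpotent elements in $\Mat(m,\mathbb{F}_q)$ is $q^{m^2-m}$ (this is Lemma \ref{countnilcent} itself in the special case $\lambda = (1^m)$, where $\lambda' = (m)$ and there is one part size; one can prove it directly, e.g.\ by Fine–Herstein, or by the $n=m$, single-part instance of the generating-function bookkeeping, but it is cleanest to just invoke it as a standard fact). Hence the number of nilpotents in $\prod_i \Mat(m_i,\mathbb{F}_q)$ is $\prod_i q^{m_i^2 - m_i}$. Since the reduction map $C \to C/J$ is surjective with kernel of size $|J| = |C| / |C/J| = q^{\sum_i (\lambda_i')^2} / q^{\sum_i m_i^2}$, and an element of $C$ is nilpotent precisely when its image is, the number of nilpotents in $C$ equals
\[
|J| \cdot \prod_i q^{m_i^2 - m_i} \;=\; \frac{q^{\sum_i (\lambda_i')^2}}{q^{\sum_i m_i^2}} \cdot q^{\sum_i (m_i^2 - m_i)} \;=\; q^{\sum_i (\lambda_i')^2 - \sum_i m_i(\lambda)},
\]
which is the claim.

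The main obstacle is the first step: pinning down that $C/J \cong \prod_i \Mat(m_i,\mathbb{F}_q)$ and that the radical has exactly the order computed above. Everything else is bookkeeping with the identity $\sum_i (\lambda_i')^2 = \sum_i i\, m_i^2 + 2\sum_{k<l} k\, m_k m_l$ from the proof of Lemma \ref{comwith}. I expect the cleanest writeup either cites the standard description of $\mathrm{End}_{k[x]}$ of a sum of cyclic modules (upper-triangular ``Toeplitz''-type block structure, with the diagonal blocks of size $m_i$ giving the semisimple quotient), or argues abstractly: $C/J$ is semisimple over the finite field $\mathbb{F}_q$, its simple modules biject with the distinct part sizes $i$ (the isotypic components of $V$ as an $\mathbb{F}_q[x]$-module), the corresponding division algebra is $\mathbb{F}_q$ itself since scalars already act, and the degree of the matrix factor is the multiplicity $m_i$; then $|C/J| = \prod_i q^{m_i^2}$ and the radical order follows by division.
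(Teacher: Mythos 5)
Your proof is correct and takes essentially the same approach as the paper: both reduce to the structure of the centralizer algebra modulo its Jacobson radical, namely $C/J \cong \prod_i \Mat(m_i(\lambda),\mathbb{F}_q)$, invoke the count $q^{m^2-m}$ for nilpotents in $\Mat(m,\mathbb{F}_q)$, and multiply by $|J|$. Your writeup just makes explicit the bookkeeping (via Lemma \ref{comwith} and the identity for $|J|$) that the paper compresses into ``The result follows.''
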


\begin{proof}    It is well known and easy that the centralizer in $\Mat(n,q)$ is
$$
\Mat(m_1,q) \times \ldots \times \Mat(m_r,q),
$$
modulo its nilpotent radical. From \cite{Lh}, the number of nilpotent matrices in $\Mat(m,q)$ is  $q^{m^2-m}$.
The result follows.
\end{proof}

Now we prove Theorem \ref{nilGL}.

\begin{proof} (Of Theorem \ref{nilGL}) Lemma \ref{jordan} gives that the number of $n \times n$ nilpotent
matrices of Jordan type $\lambda$ is equal to \[ \frac{|\GL(n,q)|}{q^{\sum_i (\lambda_i')^2} \prod_i (1/q)_{m_i(\lambda)}} .\]
Combining this with Lemma \ref{countnilcent} gives that
\[ NG_n = |\GL(n,q)| \sum_{|\lambda|=n} \frac{1}{\prod_i (1/q)_{m_i(\lambda)} q^{m_i(\lambda)}} .\]

Thus
\[ NG_n = |\GL(n,q)| [u^n] \prod_{i \geq 1} \sum_{m \geq 0} \frac{u^{im}}{(1/q)_m q^m}. \]
Now replacing $u$ by $u^i/q$ in Lemma \ref{euler} gives that
\[ \sum_{m \geq 0} \frac{u^{im}}{(1/q)_m q^m} = \prod_{l \geq 1} \frac{1}{1-u^i/q^l} .\]
It follows that
\[ NG_n = |\GL(n,q)| [u^n] \prod_{i \geq 1} \prod_{l \geq 1} \frac{1}{1-u^i/q^l}, \]
proving the theorem.
\end{proof}

\section{Unitary groups} \label{U}

We first prove some analogs of the $\GL$ case.

Let $F=\mathbb{F}_{q^2}$ and let $y^q= \bar{y}$ denote the $q$-Frobenius map on $F$.
Let $F_0 = \mathbb{F}_q$.   Let $k$ denote the algebraic
closure of $F$.
We extend this map to an $F_0$ automorphism of $\Mat(n,q^2)$.    Let $A^{\top}$ denote the transpose of $A$.

Since $\GU(n,q)$ are those matrices $U$  (in $\Mat(n,q^2)$) satisfying  $U{\bar{U}}^{\top} = I$, the Lie algebra of $\GU(n,q)$ are those matrices $A \in \Mat(n,q^2)$ such
that ${\bar{A}}^{\top} = - A$.  We call these matrices skew Hermitian.   We will occasionally write $\GL^{\epsilon}(n,q)$ for $\GL(n,q)$ with $\epsilon=+$
and $\GU(n,q)$ with $\epsilon=-$.

  Choose $\theta \in F$ so that $\theta \bar{\theta} = -1$ (if $F$ has characteristic $2$,
then take $\theta =1$ and if $F$ has odd characteristic, just take $\theta$ so that $\theta^2 \in F_0$ is a nonsquare).   Then $A$ skew Hermitian
implies that $B:=\theta A$ is Hermitian (i.e. ${\bar{B}}^{\top} = B$).  Let $\mathfrak{h}$ denote the set of Hermitian matrices.  Note that the orbits of $\GU(n,q)$ on
$\fh$
are in bijection with those of $\theta \fh$ and of course the set of commuting pairs are in bijection as well.  Thus, we will obtain counts
for the set of commuting pairs in $\fh$ (note that $\fh$ is a Jordan algebra).

We first observe that the orbits of $\GU(n,q)$ on $\fh$ are in bijection (which is  not size preserving) with those of $\GL(n,q)$
on $\Mat(n,q)$.

\begin{lemma}  Let $A \in \Mat(n,q^2)$.  Then $A$ is similar
 to an element of $\fh$ if and only if    $A$ is similar to an element of $\Mat(n,q)$.
 Moreover,  $A, B \in \fh$ are conjugate in $\GL(n,k)$ if and only if they are conjugate by an element of  $\GU(n,q)$.
 \end{lemma}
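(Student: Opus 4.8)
The plan is to treat the two assertions separately, in each case translating $\GL(n,k)$-conjugacy into concrete data over $\F_{q^2}$: recall that two matrices with entries in $\F_{q^2}$ are conjugate over $k$ iff they are conjugate over $\F_{q^2}$ iff they have the same invariant factors (computed in $\F_{q^2}[x]$), so throughout one may work over $\F_{q^2}$ whenever convenient.

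For the first statement I would show that \emph{each} of the two conditions is equivalent to the single condition $\bar A \sim A$. Since the invariant factors of $\bar A$ are obtained from those of $A$ by applying the $q$-Frobenius to coefficients, $\bar A \sim A$ holds iff every invariant factor of $A$ lies in $\F_q[x]$, i.e.\ iff $A$ is similar to its rational canonical form, which is a matrix over $\F_q$; this handles the ``$\Mat(n,q)$'' side. On the ``$\fh$'' side, if $A$ is similar to a Hermitian $B$ then $\bar A \sim \bar B = B^{\top} \sim B \sim A$ (every matrix is similar to its transpose), giving one implication. Conversely, given $\bar A \sim A$ we may assume $A = C \in \Mat(n,q)$, and it suffices to find $g \in \GL(n,\F_{q^2})$ with $gCg^{-1}$ Hermitian. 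By the Taussky--Zassenhaus theorem there is an invertible symmetric $s$ over $\F_q$ with $sCs^{-1} = C^{\top}$; such an $s$ is also a nondegenerate Hermitian matrix, and since all nondegenerate Hermitian forms over $\F_{q^2}/\F_q$ of a given rank are equivalent we may write $s = \bar g^{\top} g$, whereupon a one-line computation (using $\bar C = C$) gives $\overline{gCg^{-1}}^{\top} = (\bar g^{\top})^{-1} C^{\top} \bar g^{\top} = (\bar g^{\top})^{-1}(s C s^{-1})\bar g^{\top} = gCg^{-1}$. (Alternatively one builds the Hermitian form directly on each cyclic block $\F_{q^2}[x]/(f_i)$, $f_i \in \F_q[x]$, via the trace-type functional.)

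For the ``moreover'' I would use Lang's theorem. Equip $\GL(n,k)$ with the Steinberg endomorphism $F(g) = (\bar g^{\top})^{-1}$, so that $\GL(n,k)^F = \GU(n,q)$, and equip $\Mat(n,k)$ with the Frobenius $F'(X) = \bar X^{\top}$, so that $\Mat(n,k)^{F'} = \fh$; one checks directly the equivariance $F'(gXg^{-1}) = F(g)\,F'(X)\,F(g)^{-1}$. Let $A,B \in \fh$ with $A = gBg^{-1}$ for some $g \in \GL(n,k)$. Applying $F'$ and using $F'(A)=A$, $F'(B)=B$ gives $A = F(g)\,B\,F(g)^{-1}$, so $z := g^{-1}F(g)$ centralizes $B$. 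The centralizer $Z := Z_{\GL(n,k)}(B)$ is connected, being the set of invertible elements of the linear subspace $Z_{\Mat(n,k)}(B)$ of $\Mat(n,k)$ (a nonempty open subset of an affine space), and it is $F$-stable because $B$ is $F'$-fixed; hence $F|_Z$ is a Steinberg endomorphism of a connected group, and by Lang's theorem $c \mapsto c^{-1}F(c)$ is surjective on $Z$. Picking $c \in Z$ with $c^{-1}F(c) = z$, the element $u := gc^{-1}$ satisfies $u^{-1}F(u) = c\,z\,F(c)^{-1} = I$, so $u \in \GU(n,q)$, while $uBu^{-1} = g(c^{-1}Bc)g^{-1} = gBg^{-1} = A$. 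The reverse implication is immediate since $\GU(n,q) \subseteq \GL(n,k)$.

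The only real subtlety is bookkeeping: verifying the equivariance of the conjugation action for the pair $(F,F')$, the $F$-stability of $Z$, and the connectedness of $Z_{\GL(n,k)}(B)$ (which is special to centralizers of a \emph{single} element, as the complement of $\{\det = 0\}$ in the affine space $Z_{\Mat(n,k)}(B)$). If instead one wished to avoid algebraic groups and argue with forms, the genuine obstacle would be the classification of nondegenerate Hermitian $\F_{q^2}[x]$-modules over finite fields --- via the primary decomposition, the pairing of the $p(x)$-primary part with the $\bar p(x)$-primary part, and the fact that Hermitian forms over finite fields are classified by rank --- but the Lang-theoretic argument sidesteps this entirely.
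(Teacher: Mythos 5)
Your proof is correct and follows essentially the same route as the paper: similarity to the transpose together with the Frobenius action on invariant factors for one direction of the first claim, the Taussky--Zassenhaus theorem combined with the uniqueness of nondegenerate Hermitian forms over finite fields for the converse, and the Lang--Steinberg theorem applied to the connected centralizer for the ``moreover.'' Your write-up is simply more explicit about the intermediate condition $\bar A \sim A$, about producing the conjugating element $g$ with $s=\bar g^{\top}g$, and about the $F$-equivariance, $F$-stability, and connectedness bookkeeping in the Lang argument, all of which the paper states or uses more tersely.
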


 \begin{proof}
 First suppose that $A \in \fh$.   Since $\bar{A}^{\top} = A$ and $A$ and $A^{\top}$ are similar, it follows that $A$ and $\bar A$
 are similar.   Thus all invariant factors of $A$ are defined over $F_0$, whence $A$ is similar to an element of $\Mat(n,q)$.

 Conversely, assume that $A \in \Mat(n,q)$.    Since all nondegenerate Hermitian forms of dimension $n$ over $F$ are equivalent,
 it suffices to show that $A$ is self adjoint with respect to some Hermitian form -- i.e. $AH=H\bar{A}^{\top}$ for some invertible
 Hermitian matrix $H$.
  By \cite{TZ},  $AS = SA^{\top}$ for some symmetric invertible matrix $S \in \GL(n,q)$.  Of course, $S$
 is Hermitian and  $A$ is self adjoint with respect to the inner product determined by $S$, whence the first statement holds.

 We now prove the last statement.  Of course, if $A, B$ are conjugate via an element of $\GU(n,q)$ they are conjugate in $\GL(n,k)$.
 The other direction follows by the Lang-Steinberg theorem.  If $B = UAU^{-1}$, then $A, B \in \fh$ implies that
 $UAU^{-1} = \bar{U}^{-\top}A\bar{U}^{\top}$.   Thus,  $\bar{U}^{\top}U$ centralizes $A$.  By the Lang-Steinberg theorem (since
 the centralizer of $A$ in $\GL(n,k)$ is connected),  $\bar{U}^{\top}U  = \bar{X}^{\top}X$ for some $X$ centralizing $A$.
 Thus,  $UX^{-1}  A  X U^{-1}  = B$ and $UX^{-1} \in \GU(n,q)$.
 \end{proof}

 \begin{lemma}  Let $A \in \fh$.   Then the dimension of the centralizer (over $F_0$) of $A$ in $\fh$ is the same as
 the dimension of the centralizer (over $F$) of $A$ in $\Mat(n,q^2)$ (which is the same as the dimension of the centralizer
 in $\Mat(n,k)$).
 \end{lemma}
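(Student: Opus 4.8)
The statement compares the dimension of the centralizer of $A$ inside the real vector space $\mathfrak{h}$ of Hermitian matrices (over $F_0=\mathbb{F}_q$) with the dimension of the centralizer of $A$ inside $\Mat(n,q^2)$ (over $F=\mathbb{F}_{q^2}$), which by Lemma~\ref{comwith}'s proof equals the dimension of the centralizer in $\Mat(n,k)$. The key observation is that the centralizer $C$ of $A$ in $\Mat(n,q^2)$ is an $F$-algebra defined over $F_0$: since $\bar A^{\top}=A$, the map $X\mapsto \bar X^{\top}$ sends $C$ to itself (if $XA=AX$ then applying bar-transpose gives $A\bar X^{\top}=\bar X^{\top}A$). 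So $C$ carries an $F_0$-linear involution $\sigma(X)=\bar X^{\top}$, and the centralizer of $A$ in $\mathfrak{h}$ is precisely the fixed space $C^{\sigma}$, an $F_0$-subspace of the $F$-space $C$.

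First I would make the involution explicit and note it is $F_0$-linear but $F$-semilinear (it conjugates scalars by Frobenius). Then I would observe that $\sigma$ is defined over $F_0$ in the sense that $C$ has an $F_0$-structure $C_0$ (one can take $C_0 = C \cap \Mat(n,q)$, using that $A$ may be taken in $\Mat(n,q)$ after conjugation as in the previous lemma, or argue abstractly) on which $\sigma$ acts $F_0$-linearly. The general principle is: if $W$ is an $F$-vector space equipped with an $F$-semilinear involution $\tau$ with respect to the extension $F/F_0$, then $W^{\tau}$ is an $F_0$-form of $W$, i.e. $\dim_{F_0} W^{\tau} = \dim_F W$; this is exactly the "Galois descent" / additive Hilbert 90 statement for the quadratic extension $\mathbb{F}_{q^2}/\mathbb{F}_q$, and can be proved directly by choosing $\theta$ with $\bar\theta=-\theta$ (available since the extension is separable) and writing $w = \tfrac12(w+\tau(w)) + \tfrac{\theta}{2}\cdot\theta^{-1}(w-\tau(w))$ in odd characteristic, with the analogous additive splitting in characteristic $2$ using that $x\mapsto x+\bar x$ is surjective onto $F_0$.

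Applying this with $W=C$ and $\tau=\sigma$ gives $\dim_{F_0} C^{\sigma} = \dim_F C$, which is precisely the claimed equality: the left side is the dimension over $F_0$ of the centralizer of $A$ in $\mathfrak{h}$, and the right side is the dimension over $F$ of the centralizer of $A$ in $\Mat(n,q^2)$. The identification with the centralizer dimension in $\Mat(n,k)$ is then immediate from the fact, already used in the proof of Lemma~\ref{comwith}, that centralizer dimension is unchanged under field extension.

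The main obstacle, such as it is, is purely bookkeeping: verifying that $\sigma$ genuinely restricts to an involution of $C$ (straightforward from $\bar A^{\top}=A$ together with $A$ being similar over $k$ to its transpose — but one must be a little careful since $A^{\top}$ is not literally $A$, so it is cleanest to first conjugate $A$ into $\Mat(n,q)$ so that $\bar A = A$ and $A$ is self-adjoint for a fixed Hermitian form $S$, as in the preceding lemma, and then take $\sigma(X) = S^{-1}\bar X^{\top} S$), and then checking the descent splitting in both parity cases. Neither step is deep; the content is entirely the semilinear-involution / Galois-descent dimension count.
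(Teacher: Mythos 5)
Your proof is correct and rests on the same underlying idea as the paper's: the centralizer $C$ of $A$ in $\Mat(n,q^2)$ is stable under the $F$-semilinear involution $\sigma(X)=\bar X^{\top}$ (because $\sigma(A)=A$), the centralizer of $A$ in $\fh$ is exactly $C^{\sigma}$, and Galois descent for the quadratic extension $F/F_0$ gives $\dim_{F_0}C^{\sigma}=\dim_F C$. The paper packages the same fact concretely: it writes $\Mat(n,q^2)=\fh\oplus\alpha\fh$ as $F_0$-spaces, notes that the $F$-linear operator $[A,\cdot]$ (equivalently, the $\sigma$-stability of $C$) forces $C=(C\cap\fh)\oplus(C\cap\alpha\fh)$ with the two summands of equal $F_0$-dimension, and concludes $\dim_{F_0}(C\cap\fh)=\tfrac12\dim_{F_0}C=\dim_F C$. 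Your version buys a cleaner statement (it invokes the general semilinear-descent principle rather than exhibiting the complement), and it highlights that the only inputs are $\sigma(A)=A$ and $F$-linearity of $C$; the paper's version is more self-contained and elementary.

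One small correction: the caution in your final paragraph is unfounded. You already verified the only thing that needs checking, namely that $XA=AX$ and $\bar A^{\top}=A$ together give $A\bar X^{\top}=\bar X^{\top}A$, so $\sigma$ genuinely preserves $C$. There is no need to conjugate $A$ into $\Mat(n,q)$ first or to pass to the twisted involution $X\mapsto S^{-1}\bar X^{\top}S$; the worry about ``$A^{\top}$ not being literally $A$'' never arises because the computation uses $\bar A^{\top}=A$, not $A^{\top}=A$.
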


 \begin{proof}  We can write  $\Mat(n,q^2) = \fh \oplus  \alpha \fh$ (as $F_0$ spaces) for any element $\alpha \in F \setminus{F_0}$.
 Note that the map $X \rightarrow [A,X]$ preserves this decomposition.  Thus, $C$ the centralizer of $A$ is a direct sum of its intersections
 with $\fh$ and $\alpha \fh$.   Clearly,  $\dim_{F_0}  C \cap \fh = \dim_{F_0} \alpha \fh$.    Letting $kC$ denote
 the centralizer of $A$ in $\Mat(n,k)$, we  thus obtain  $\dim_{F_0} C \cap \fh =
 \dim_F  C = \dim_k  kC$.
 \end{proof}

 \begin{lemma}  Let $A \in \fh$.     Let $f(x) \in F_0[x]$ denote the characteristic polynomial of $A$ and
 factor $f(x) = \prod_i \phi_i(x)^{e_i}$ where the $\phi_i$ are distinct monic irreducible polynomials in $F_0[x]$.
   Let $V$ denote the column space of dimension $n$   over $F$.  Then
 $V$ is an orthogonal direct sum of spaces $V_i$ such that the characteristic polynomial of $A$ on $V_i$
 is a power of $\phi_i$ with $\phi_i \ne \phi_j$.
 \end{lemma}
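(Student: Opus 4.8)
The plan is to build the orthogonal decomposition of $V$ by first decomposing it as an $F[A]$-module via the primary decomposition of the characteristic polynomial, and then checking that the resulting summands are pairwise orthogonal with respect to the Hermitian form. First I would set $V_i = \ker \phi_i(A)^{e_i}$, the generalized $\phi_i$-eigenspace of $A$ acting on $V$ (over $F$, but recall $\phi_i \in F_0[x]$, so there is no complication from $\phi_i$ factoring over $F$). Since the $\phi_i^{e_i}$ are pairwise coprime and their product is $f(x)$, the Chinese Remainder Theorem for the PID $F[x]$ gives the direct sum decomposition $V = \bigoplus_i V_i$ as $F[A]$-modules, and by construction the characteristic polynomial of $A$ restricted to $V_i$ is exactly $\phi_i^{e_i}$, a power of $\phi_i$; the $\phi_i$ are distinct by hypothesis.

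The real content is the orthogonality, and here the key is that $A$ is self-adjoint with respect to the Hermitian form: writing $\langle \cdot, \cdot \rangle$ for the form on $V$, we have $\langle A v, w \rangle = \langle v, A w \rangle$ for all $v, w$ (this is precisely the defining property of $\fh$; one should be a little careful about conjugate-linearity, but since $\phi_i$ has coefficients in $F_0$, which is fixed by the bar involution, the argument goes through cleanly). Consequently, for any polynomial $g(x) \in F_0[x]$ we get $\langle g(A) v, w \rangle = \langle v, g(A) w \rangle$. Now take $v \in V_i$ and $w \in V_j$ with $i \neq j$. Since $\phi_i^{e_i}$ and $\phi_j^{e_j}$ are coprime in $F_0[x]$, choose $a, b \in F_0[x]$ with $a \phi_i^{e_i} + b \phi_j^{e_j} = 1$. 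Then $v = (b\phi_j^{e_j})(A) v$ because $\phi_i^{e_i}(A)$ kills $v$, while $\phi_j^{e_j}(A)$ kills $w$; hence $\langle v, w \rangle = \langle (b \phi_j^{e_j})(A) v, w \rangle = \langle v, (b \phi_j^{e_j})(A) w \rangle = \langle v, 0 \rangle = 0$. So $V_i \perp V_j$ for $i \neq j$, and since $\bigoplus_i V_i = V$ is the whole space and the form is nondegenerate, each $V_i$ is itself a nondegenerate subspace and the decomposition is an orthogonal direct sum.

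I expect the only subtlety — and the step most worth spelling out carefully — to be the self-adjointness computation: one must confirm that $A \in \fh$, i.e. $\bar{A}^{\top} = A$, translates into $\langle Av, w\rangle = \langle v, Aw \rangle$ with the correct placement of the bar, and that $g(A)$ remains self-adjoint for $g \in F_0[x]$ (which uses $\bar g = g$, true exactly because the coefficients lie in $F_0$). Everything else is the standard primary-decomposition argument for a single operator on a module over the PID $F[x]$, together with a Bezout identity over $F_0[x]$ to transport orthogonality, so there is no serious obstacle beyond bookkeeping.
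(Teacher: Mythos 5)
Your proposal is correct, and it is a genuinely different argument from the paper's. The paper's one-sentence proof leans on the preceding lemma: it first observes that there exists a representative of the similarity class that is explicitly block diagonal with orthogonal blocks, and then invokes the fact (proved there via Lang--Steinberg) that similarity classes in $\fh$ coincide with $\GU(n,q)$-orbits, so the decomposition can be transported to $A$ itself by a unitary conjugation. Your argument instead works directly with $A$: you take the primary decomposition $V_i = \ker\phi_i(A)^{e_i}$, observe that $A\in\fh$ is self-adjoint for the Hermitian form, hence so is every $g(A)$ with $g\in F_0[x]$ (using $\bar g = g$), and then a Bezout identity $a\phi_i^{e_i}+b\phi_j^{e_j}=1$ forces $\langle v,w\rangle = 0$ for $v\in V_i$, $w\in V_j$; nondegeneracy of each summand follows because $V$ is nondegenerate. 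Both proofs are valid. Yours is more elementary and self-contained (it uses the previous lemma only implicitly, to know that the characteristic polynomial lies in $F_0[x]$), and it spells out the self-adjointness bookkeeping that the paper elides; the paper's version is terser but consistent with its strategy of deriving everything from the orbit classification. One point worth noting explicitly if you wrote this up: $b(A)$ commutes with $\phi_j^{e_j}(A)$, so $b(A)$ preserves $V_j$ and hence $b(A)\phi_j^{e_j}(A)w = 0$ as you use; and if $\phi_i$ has even degree it splits over $F$, but $V_i$ is still well-defined as the kernel of $\phi_i(A)^{e_i}$ and carries characteristic polynomial $\phi_i^{e_i}$, exactly as you say.
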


 \begin{proof}  By the previous lemma,  for any given similarity class, there is a representative that is of the form
 given.  Also by the previous lemma, the similarity class determines the orbit under the unitary group and the
 result follows.
 \end{proof}

 We now want to determine the centralizer of $A \in \fh$.  By the previous lemma, it suffices to do this under the
 assumption that the characteristic polynomial of $A$ is $\phi(x)^m$ where $\phi(x) \in F_0[x]$ is irreducible of degree $d$.
 The similarity class of such an $A$ (given $\phi(x)$) is determined by a partition of $m$.
 So write $m = \sum_i e_i \lambda_i$ where $\lambda_1 > \lambda_2 > \ldots$ and the partition has $e_i$ parts of length
 $\lambda_i$.

 \begin{lemma}   Let $A \in \fh$.  Let $G=\GU(n,q)$.  Then $C_G(A)$ has unipotent radical $Q$ of order $q^{\ell}$ where
 $\ell$ is the dimension of the unipotent radical of  $C_{\GL(n,k)}(A)$ and
 $$
 C_G(A)/Q  \cong  \prod_i  \GL^{\epsilon}(e_i, q^d),
 $$
 where $\epsilon = +$ if $d$ is even and $-$ if $d$ is odd.
 \end{lemma}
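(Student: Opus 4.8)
The plan is to realize $C_G(A)$ as the ``unitary group'' of an associative algebra with involution and to read off its unipotent radical and reductive quotient from the Wedderburn structure of that algebra. Throughout, by the previous lemmas I may assume that the characteristic polynomial of $A$ equals $\phi(x)^{m}$ with $\phi \in \mathbb{F}_q[x]$ irreducible of degree $d$, and I write $\lambda = (\lambda_1^{e_1},\lambda_2^{e_2},\dots)$, $\lambda_1 > \lambda_2 > \cdots$, for the partition of $m$ recording the similarity class of $A$. Put $x^{*} = \bar{x}^{\top}$ and $\mathcal{C} = C_{\Mat(n,q^2)}(A)$, an associative algebra over $F = \mathbb{F}_{q^2}$. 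Since $A \in \fh$ means $A^{*}=A$, the map $*$ restricts to an $\mathbb{F}_q$-linear, $F$-semilinear anti-involution of $\mathcal{C}$, and the definition of $\GU(n,q)$ gives $C_G(A) = \{x \in \mathcal{C}^{\times} : x x^{*} = 1\}$. Let $J = J(\mathcal{C})$ be the Jacobson radical; it is nilpotent and $*$-stable, and since $\mathbb{F}_q$ is perfect $C_{\Mat(n,k)}(A) = \mathcal{C}\otimes_F k$ has Jacobson radical $J\otimes_F k$, so $R := 1 + J\otimes_F k$ is the unipotent radical of $C_{\GL(n,k)}(A)$. Set $Q = \{x \in C_G(A) : x \equiv 1 \bmod J\}$.

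The order of $Q$ and the quotient $C_G(A)/Q$ I would get from Lang--Steinberg. Realizing $\GU(n,q) = \GL(n,k)^{\sigma}$ for the relevant twisted Frobenius $\sigma$, the centralizer $C_{\GL(n,k)}(A)$ is connected and $\sigma$-stable, hence so is $R$, and $Q = R^{\sigma}$. As $R$ is connected unipotent, $|Q| = |R^{\sigma}| = q^{\dim R} = q^{\ell}$, where $\ell = \dim R$ is the dimension of the unipotent radical of $C_{\GL(n,k)}(A)$; moreover $Q$ is normal and unipotent in $C_G(A)$, and the exact sequence $1 \to R^{\sigma} \to C_{\GL(n,k)}(A)^{\sigma} \to \big(C_{\GL(n,k)}(A)/R\big)^{\sigma} \to 1$ (valid because $R$ is connected) identifies
\[ C_G(A)/Q \;\cong\; \big(C_{\GL(n,k)}(A)/R\big)^{\sigma} \;=\; \{\bar{x} \in (\mathcal{C}/J)^{\times} : \bar{x}\,\bar{x}^{*} = 1\} =: U(\mathcal{C}/J,*). \]
Thus it remains to show $U(\mathcal{C}/J,*) \cong \prod_i \GL^{\epsilon}(e_i,q^{d})$.

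For the last point I would view $V = F^{n}$ as an $F[x]$-module with $x$ acting by $A$ and use that $\mathcal{C}/J \cong \prod_{\pi}\prod_i \Mat_{e_i}\!\big(F[x]/(\pi)\big)$, the outer product being over the monic irreducible factors $\pi$ of $\phi$ in $F[x]$, with the multiplicities $e_i$ independent of $\pi$ because the invariant factors $\phi^{\lambda_i}$ of $A$ lie in $\mathbb{F}_q[x]$; the anti-involution $*$ permutes these blocks exactly as $y \mapsto \bar{y}$ permutes the factors $\pi$ (the part-length index being preserved). If $d$ is odd then $\phi$ stays irreducible over $F$, $F[x]/(\phi) = \mathbb{F}_{q^{2d}}$, each block $\Mat_{e_i}(\mathbb{F}_{q^{2d}})$ is $*$-stable, and $*$ acts on its centre as the generator of $\mathrm{Gal}(\mathbb{F}_{q^{2d}}/\mathbb{F}_{q^{d}})$ (it fixes $x$, hence fixes $\mathbb{F}_q[x]/(\phi) = \mathbb{F}_{q^{d}}$, and it is the $q$-power map on $\mathbb{F}_{q^2}$), so the unitary group of that block is $\GU(e_i,q^{d}) = \GL^{-}(e_i,q^{d})$. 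If $d$ is even then $\phi = \psi\bar{\psi}$ over $F$ with $\psi$ irreducible of degree $d/2$, $F[x]/(\psi) \cong F[x]/(\bar{\psi}) \cong \mathbb{F}_{q^{d}}$, the $\psi$- and $\bar{\psi}$-primary parts of $V$ both have type $\lambda$, and $*$ interchanges the $\psi$-block with the $\bar{\psi}$-block; the unitary group of $\Mat_{e_i}(\mathbb{F}_{q^{d}}) \times \Mat_{e_i}(\mathbb{F}_{q^{d}})$ under an anti-involution interchanging the two factors maps isomorphically onto the first factor and so is $\GL(e_i,q^{d}) = \GL^{+}(e_i,q^{d})$. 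Multiplying over the blocks gives $C_G(A)/Q \cong \prod_i \GL^{\epsilon}(e_i,q^{d})$ with $\epsilon = +$ if $d$ is even and $\epsilon = -$ if $d$ is odd, which completes the proof.

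I expect the main obstacle to be the bookkeeping of the last paragraph: tracking how the $\mathbb{F}_q$-linear, $F$-semilinear anti-involution $*$ acts on the primary decomposition of $V$ over $F = \mathbb{F}_{q^2}$, in particular the coincidence of the types of the $\psi$- and $\bar{\psi}$-parts when $d$ is even and the correct identification of the residue fields ($\mathbb{F}_{q^{2d}}$ when $d$ is odd, $\mathbb{F}_{q^{d}}$ when $d$ is even), since these are precisely what force the parameter $q^{d}$ to appear in both cases and determine the sign $\epsilon$. By contrast, the equality $|Q| = q^{\ell}$ is routine once one has connectedness of centralizers in $\GL$ (already used in the preceding lemmas) together with Lang--Steinberg.
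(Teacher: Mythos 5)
Your proof is correct, and it takes a genuinely different route from the paper's. The paper argues geometrically: after reducing to a single $\phi$ and splitting into the cases $d$ even and $d$ odd, it explicitly decomposes the underlying space $V$ into $A$-invariant subspaces (totally singular complementary pairs when $d$ is even, orthogonal isotypic pieces when $d$ is odd), further splits the $d$-odd case into $\lambda=1$ and $\lambda>1$, and identifies the reductive part of the centralizer by hand via block-diagonal matrices preserving the Hermitian form. You instead work with the centralizer algebra $\mathcal{C}=C_{\Mat(n,q^2)}(A)$ as an associative $F$-algebra equipped with the $F$-semilinear anti-involution $*$, invoke Lang--Steinberg once to get the exact sequence that identifies $C_G(A)/Q$ with the unitary group of $\mathcal{C}/J(\mathcal{C})$, and then read everything off the Wedderburn decomposition of $\mathcal{C}/J$ and the way $*$ permutes its simple factors. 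This is cleaner in two respects: the product over the distinct part-lengths $\lambda_i$ falls out of the Wedderburn structure for free rather than requiring the ``reduce to $V=V_i$'' step, and the $\lambda=1$ versus $\lambda>1$ sub-cases of the paper collapse into a single computation with $\mathcal{C}/J$. The one point you gloss over slightly is why, in the $d$-odd case, the anti-involution of the second kind on $\Mat_{e_i}(\F_{q^{2d}})$ yields precisely $\GU(e_i,q^d)$ (one needs that it comes from a nondegenerate Hermitian/skew-Hermitian form, and that over a finite field all such forms of a given rank give the same group); this is standard but worth a sentence. The paper's approach, while more ad hoc, has the virtue of making the Hermitian geometry visible at every step.
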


 \begin{proof}   Let $X$ be the centralizer of $A$ in $\GL(n,k)$.   This has a unipotent radical $U$ and
 $Q$ is just the fixed points of $U$ under the endomorphism of $\GL(n,k)$ defining $GL^{\epsilon}(n,q)$.
 For any Steinberg-Lang endomorphism on a connected unipotent group, the size of the fixed point group
 is as given.

 So we now focus on the reductive part of the centralizer.   Let $V$ denote the natural module (over $F$).

 First suppose that $d$ is even.  Then $\phi(x) = \phi_1(x) \phi_2(x)$ where $\phi_i(x)$ has degree $d/2$ and
 $\phi_2(x)$ is the Galois conjugate of $\phi_1(x)$ under the $q$ power map.

 It follows that $V = V_1 \oplus V_2$ where $V_i$ is the kernel of $\phi_i(A)^n$.  Then $V_1$ and $V_2$ are totally
 singular spaces (with respect to the Hermitian form) and $A$ has characteristic polynomial a power of
 $\phi_i(x)$ on $V_i$.

 Let $A_i$ denote the restriction of $A$ to $V_i$.
 Clearly the centralizer of $A$ preserves each $V_i$.  Note that if $g \in \GU(n,q)$ fixes each $V_i$, then
 with respect to an appropriate basis  if $g_i$ denotes $g$ restricted to $V_i$, then $g_2 = \bar{g}_1^{-\top}$.
 The subgroup stabilizing $V_1$ and $V_2$ is isomorphic to $\GL(n/2, q^2)$ and the
 centralizer of $A$ in $U$ is isomorphic
 to the centralizer of $A_1$ in $\GL(n,q^2)$.  The result then follows by the result for $\GL$.

 Now suppose that $d$ is odd.   Then $\phi(x)$ remains irreducible over $F$.   By the uniqueness of the canonical forms,
 we may assume that $V$ is an orthogonal direct sum of subspaces $V_i$ such that on $V_i$ the rational canonical
 form has all blocks the same (i.e. the minimal  and characteristic polynomials are both powers  of  $\phi(x)$).

 By considering the $\GL$ case (over $k$) we see that the reductive part of the centralizer (up to conjugacy) preserves
 each $V_i$.   Thus, it suffices to assume that $V = V_i$.  In the notation given above, this means that $\lambda_1=\lambda$,
 $e_1=e$ and $de\lambda = n$.

 First suppose that
 $\phi(x)$ is the minimal polynomial of $A$ (i.e. $A$ is a semisimple element and $\lambda  =1$).     Then we can decompose the
 space as a direct sum of $e$ orthogonal spaces each of dimension $d$ so that $A$ acts irreducibly on each space.  By the $\GL$ case,
 the centralizer in $\GL(n,F)$ is just $\GL(e,q^{2d})$.   We can view the standard Hermitian form as inducing one on the $e$ dimensional vector
 space over $\F_{q^{2d}}$ and clearly  $\GL(e,q^{2d}) \cap \GU(de,q) = \GU(e,q)$ as claimed.

 Finally suppose that $\lambda > 1$.   Then we can decompose the space as $V_1 \oplus \ldots \oplus V_{\lambda}$ (this is not an
 orthogonal decomposition and indeed $V_1$ will be totally singular) so that the matrix of $A$ with respect to this decomposition is

 $$
 \begin{pmatrix}
 B & I & 0 & \ldots  &0 \\
 0  & B &  I & \ldots &  0 \\
&  & \cdots & &\\
 0 & 0 & 0 & \ldots & B \\
  \end{pmatrix}.
 $$
 \bigskip

 Here $B$ is semisimple and has minimal polynomial $\phi(x)$ and characteristic polynomial $\phi(x)^e$.
 We can take the Hermitian form preserved by this matrix to be in block diagonal form (as in the above
 equation) with the antidiagonal blocks all being $I$ and all other blocks being $0$.   So we need to
 compute the centralizer with respect to the unitary group preserving this form and as before we only need
 to compute the reductive part of this centralizer.    Again, considering the case of $\GL$, we see that the
 reductive part of the centralizer will be block diagonal.    It is straightforward to see that if $x$ is block
 diagonal centralizing $A$, then all diagonal blocks are the same.    By the previous case this implies
 that $x \in \GU(e,q^d)$ and clearly any such element commutes with $A$, whence
 the result.
\end{proof}

Lemma \ref{jordanu} is a unitary analog of Lemma \ref{jordan} and the proof is identical given the
previous results.

\begin{lemma} \label{jordanu} The orbits of $\GU(n,q)$ on its Lie algebra correspond to $\{ \lambda_{\phi} \}$
 satisfying
\[ \sum_{\phi} d(\phi) |\lambda_{\phi}| = n ,\] that is to the possible rational canonical forms of
an element of $\Mat(n,q)$. The size of the orbit corresponding to data $\{ \lambda_{\phi} \}$ is
\[ \frac{|\GU(n,q)|}{\prod_{\phi} C_{\phi}(\lambda)} .\]
Here
\[ C_{\phi}(\lambda) = q^{d(\phi) \sum_i (\lambda_{\phi,i}')^2}
\prod_i (1/q^{d(\phi)})_{m_i(\lambda_{\phi})} \] if $d(\phi)$ is even, and
\[ C_{\phi}(\lambda) = q^{d(\phi) \sum_i (\lambda_{\phi,i}')^2}
\prod_i (1/q^{d(\phi)})_{m_i(\lambda_{\phi}),q \mapsto -q} \] if $d(\phi)$ is odd.
Here the notation $(1/q^{d(\phi)})_{m_i(\lambda_{\phi}),q \mapsto -q}$ means that
we replace $q$ by $-q$ in the expression $(1/q^{d(\phi)})_{m_i(\lambda_{\phi})}$.
\end{lemma}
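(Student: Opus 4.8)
The plan is to reprove Stong's count (Lemma~\ref{jordan}) in the unitary setting, using the centralizer descriptions established in the lemmas above; as indicated before the statement, essentially all the real content is already in place and only a size computation remains. The identification of $\GU(n,q)$-orbits on $\fh$ with rational canonical forms $\{\lambda_\phi\}$ satisfying $\sum_\phi d(\phi)|\lambda_\phi| = n$ is immediate from the lemma matching such orbits with $\GL(n,k)$-classes, which themselves correspond to conjugacy classes in $\Mat(n,q)$. By orbit--stabilizer, the orbit attached to $\{\lambda_\phi\}$ has size $|\GU(n,q)|/|C_{\GU(n,q)}(A)|$ for any $A$ in it, so the task reduces to showing $|C_{\GU(n,q)}(A)| = \prod_\phi C_\phi(\lambda)$.

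First I would assemble the centralizer order from the structure result proved above. Since $V$ is an orthogonal direct sum of the primary pieces $V_\phi$ and the Hermitian form restricts nondegenerately to each, we get $C_{\GU(n,q)}(A) = \prod_\phi C_{\GU(n_\phi,q)}(A_\phi)$ with $n_\phi = d(\phi)|\lambda_\phi|$, exactly as the centralizer splits along the primary decomposition in the proof of Lemma~\ref{comwith}. For each $\phi$, the structure lemma gives a unipotent radical $Q_\phi$ of order $q^{\ell_\phi}$, with $\ell_\phi$ the dimension of the unipotent radical of $C_{\GL(n_\phi,k)}(A_\phi)$, and $C_{\GU(n_\phi,q)}(A_\phi)/Q_\phi \cong \prod_i \GL^{\epsilon(\phi)}(m_i(\lambda_\phi), q^{d(\phi)})$, where $\epsilon(\phi) = +$ for $d(\phi)$ even and $\epsilon(\phi) = -$ for $d(\phi)$ odd. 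Hence
\[ |C_{\GU(n,q)}(A)| = q^{\ell}\prod_\phi\prod_i \bigl|\GL^{\epsilon(\phi)}(m_i(\lambda_\phi), q^{d(\phi)})\bigr|, \qquad \ell = \sum_\phi \ell_\phi. \]

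Next I would evaluate $\ell$. Over $k$ a degree-$d$ irreducible $\phi$ splits into $d$ distinct linear factors, so the $\phi$-primary part of $A$ is conjugate to a sum of $d$ copies (up to scalars) of a nilpotent of type $\lambda_\phi$; the centralizer of such a nilpotent in $\GL_{|\lambda_\phi|}(k)$ has dimension $\sum_i(\lambda_{\phi,i}')^2$ (the exponent in Lemma~\ref{comwith}) and reductive quotient $\prod_i \GL_{m_i(\lambda_\phi)}(k)$, of dimension $\sum_i m_i(\lambda_\phi)^2$. Therefore $\ell = \sum_\phi d(\phi)\bigl(\sum_i(\lambda_{\phi,i}')^2 - \sum_i m_i(\lambda_\phi)^2\bigr)$. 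Now substitute $|\GL(e,q^{d})| = q^{de^2}(1/q^{d})_e$ and $|\GU(e,q^{d})| = q^{de^2}\prod_{j=1}^e\bigl(1-(-1)^j/q^{dj}\bigr)$: the power of $q$ coming from $\ell$ together with the $q^{de^2}$ factors reconstitutes $q^{d(\phi)\sum_i(\lambda_{\phi,i}')^2}$ for each $\phi$, while the remaining product is $\prod_i(1/q^{d(\phi)})_{m_i(\lambda_\phi)}$ when $d(\phi)$ is even and the same expression with $q$ replaced by $-q$ when $d(\phi)$ is odd, because $\prod_{j=1}^e(1-(-1)^j/q^{dj})$ is exactly $(1/q^{d})_e$ after the substitution $q \mapsto -q$ for odd $d$. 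This is precisely $\prod_\phi C_\phi(\lambda)$, which finishes the proof.

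I do not anticipate a genuine obstacle: the substantive work---the similarity-class/orbit correspondence and the explicit structure of $C_{\GU(n,q)}(A)$---is carried out in the preceding lemmas, and what remains is bookkeeping of $q$-exponents. The only point needing care is matching the order of $\GU$ over $\mathbb{F}_{q^{d}}$ with the $q\mapsto -q$ substitution inside $(1/q^{d})_{m_i(\lambda_\phi)}$ (equivalently, checking $|\GU(e,q^{d})| = q^{de^2}(1/q^{d})_{e,\,q\mapsto -q}$ for odd $d$) and keeping the even/odd cases of $\epsilon(\phi)$ aligned with the $\GL$ versus $\GU$ factors.
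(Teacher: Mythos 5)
Your proof is correct and follows exactly the route the paper intends: the paper omits the details, stating only that the proof is "identical given the previous results," and you supply precisely that bookkeeping, combining orbit--stabilizer with the orthogonal primary decomposition, the unipotent-radical order $q^{\ell}$, and the reductive quotient $\prod_i \GL^{\epsilon(\phi)}(m_i(\lambda_\phi), q^{d(\phi)})$ from the structure lemma. The computation of $\ell$ over $k$ and the matching of $|\GU(e,q^{d})|$ with the $q\mapsto -q$ substitution for odd $d$ are both verified correctly.
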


Lemma \ref{comwithu} is a unitary analog of Lemma \ref{comwith}, and the proof is identical
given the previous results.

\begin{lemma} \label{comwithu} Let $x$ be an element of the Lie algebra of $\GU(n,q)$, with data
$\{ \lambda_{\phi} \}$. Then the number of elements of the Lie algebra of $\GU(n,q)$ which commute with
$x$ is equal to
\[ \prod_{\phi} q^{d(\phi) \sum_i (\lambda_{\phi,i}')^2} .\]
\end{lemma}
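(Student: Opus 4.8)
The plan is to deduce Lemma \ref{comwithu} directly from the preceding structural results about centralizers, exactly as was done for $\GL$ in Lemma \ref{comwith}. First I would recall that by the lemma computing $C_G(A)$ for $A \in \fh$, the centralizer of $x$ in the group $\GU(n,q)$ has a unipotent radical $Q$ of order $q^\ell$ and reductive quotient a product of groups $\GL^{\epsilon}(e_i, q^d)$; but here I do not want the group centralizer, I want the Lie algebra centralizer. The key input is the lemma asserting that for $A \in \fh$, the dimension over $F_0$ of the centralizer of $A$ inside $\fh$ equals the dimension over $k$ of the centralizer of $A$ in $\Mat(n,k)$. Since the Lie algebra centralizer of $x$ in $\fh$ is an $F_0$-subspace, its cardinality is $q$ raised to that $F_0$-dimension.

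The second step is to identify that dimension with the exponent in the statement. By Lemma \ref{comwith} (or rather its proof), the dimension of the centralizer of an element of $\Mat(n,k)$ with rational canonical data $\{\lambda_\phi\}$ is $\sum_{\phi} d(\phi) \sum_i (\lambda_{\phi,i}')^2$, where here $d(\phi)$ is the degree of $\phi$ over the relevant base field; the previous lemmas (the orthogonal decomposition lemma and the one reducing to $\phi(x)^m$) guarantee that the rational canonical form of $x$ over $F_0$, viewed through the bijection of Lemma \ref{jordanu}, is governed by precisely this same combinatorial data. Passing to $k$, the centralizer splits as a direct sum over the $\phi$-primary pieces, and on each piece the dimension computation is the Jordan-block count $\sum_{i,j}\min(\lambda_i,\lambda_j) = \sum_i (\lambda_i')^2$ already carried out in the proof of Lemma \ref{comwith}, scaled by $d(\phi)$ because each block over $F_0$ becomes $d(\phi)$ blocks after base change. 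Combining, $\dim_k C_{\Mat(n,k)}(x) = \sum_\phi d(\phi)\sum_i (\lambda_{\phi,i}')^2$, and therefore $|C_{\fh}(x)| = q^{\sum_\phi d(\phi)\sum_i (\lambda_{\phi,i}')^2}$, which is the claimed formula once one transports back from $\fh$ to the Lie algebra of $\GU(n,q)$ via the size-preserving bijection $A \mapsto \theta A$.

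I expect the only genuine subtlety to be bookkeeping with base fields: when $d(\phi)$ is odd versus even the natural module decomposes differently (irreducible over $F$ versus splitting into two Galois-conjugate pieces), so one must check that in both cases the $F$-dimension of the centralizer of $A$ in $\Mat(n,q^2)$ — equivalently the $k$-dimension in $\Mat(n,k)$ — still comes out to $d(\phi)\sum_i (\lambda_{\phi,i}')^2$ with no sign or parity correction. This is exactly the content already extracted in the lemmas preceding Lemma \ref{jordanu}, so the argument is a matter of quoting them in the right order rather than proving anything new; indeed, as the paper notes, the proof is identical to that of Lemma \ref{comwith}. The final formula is manifestly independent of the parity of $d(\phi)$ — the parity only affected the reductive part of the \emph{group} centralizer in Lemma \ref{jordanu}, not the \emph{dimension} of the Lie algebra centralizer — which is consistent with the clean statement being asserted.
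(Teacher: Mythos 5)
Your proposal matches the paper's intended argument: the paper simply remarks that ``the proof is identical given the previous results,'' and you have unpacked exactly what that means, namely that the lemma asserting $\dim_{F_0}(C_{\Mat(n,q^2)}(A)\cap\fh) = \dim_k C_{\Mat(n,k)}(A)$ reduces the count to the $\GL$ dimension computation $\sum_\phi d(\phi)\sum_i (\lambda_{\phi,i}')^2$ already carried out in the proof of Lemma \ref{comwith}. This is the same route, just written out.
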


Now we prove the unitary analog of the Feit-Fine theorem. Let $U_0=1$ and for $n \geq 1$, let $U_n$
be the number of ordered pairs of commuting elements of the Lie algebra of $\GU(n,q)$.

\begin{theorem}
\[ \sum_{n \geq 0} \frac{U_n u^n}{|\GU(n,q)|} = \prod_{i \geq 1} \prod_{l \geq 0} \frac{1}{1-(-1)^l u^i/q^{l-1}} \]
\end{theorem}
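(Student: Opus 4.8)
The plan is to imitate the proof of Theorem \ref{ff}, feeding in the unitary substitutes proved above, and to isolate the single new ingredient: a signed version of Lemma \ref{poly}.

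First I would combine Lemmas \ref{jordanu} and \ref{comwithu}. Summing (orbit size)$\times$(size of centralizer) over the orbits $\{\lambda_\phi\}$ of $\GU(n,q)$ on its Lie algebra, the factor $q^{d(\phi)\sum_i (\lambda_{\phi,i}')^2}$ of $C_\phi(\lambda)$ cancels against the centralizer count of Lemma \ref{comwithu}, leaving
\[ U_n = |\GU(n,q)|\,[u^n] \prod_{\phi} \sum_{\lambda} \frac{u^{|\lambda|\,d(\phi)}}{D_\phi(\lambda)}, \]
where the product is over monic irreducible $\phi \in \mathbb{F}_q[x]$, and $D_\phi(\lambda) = \prod_i (1/q^{d(\phi)})_{m_i(\lambda)}$ when $d(\phi)$ is even, while $D_\phi(\lambda) = \prod_i (1/q^{d(\phi)})_{m_i(\lambda),\, q \mapsto -q}$ when $d(\phi)$ is odd. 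Exactly as in the $\GL$ case, grouping the parts of $\lambda$ by size turns each inner sum into $\prod_{i \geq 1} \sum_{m \geq 0} u^{im\,d(\phi)}/(1/q^{d(\phi)})_m$, resp.\ the same expression with $q \mapsto -q$.

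Next I would apply Lemma \ref{euler}. For $d = d(\phi)$ even, the substitution $q \mapsto q^d$, $u \mapsto u^{id}$ gives $\sum_{m \geq 0} u^{imd}/(1/q^d)_m = \prod_{l \geq 0} 1/(1 - u^{id}/q^{dl})$; for $d$ odd, making in addition the substitution $q \mapsto -q$ (equivalently, applying Lemma \ref{euler} with base $-q^d$) gives $\prod_{l \geq 0} 1/(1 - (-1)^l u^{id}/q^{dl})$, using that $d$ odd forces $(-1)^{dl} = (-1)^l$. Interchanging the products over $\phi$, $i$, $l$, the operator $[u^n]$ then acts on $\prod_{i \geq 1} \prod_{l \geq 0}$ of
\[ \prod_{\phi:\, d(\phi)\text{ even}} \frac{1}{1 - t^{d(\phi)}} \; \prod_{\phi:\, d(\phi)\text{ odd}} \frac{1}{1 - (-1)^l t^{d(\phi)}}, \qquad t := u^i/q^l . \]
Since $d(\phi)$ is odd in the second product, $(-1)^l t^{d(\phi)}$ equals $t^{d(\phi)}$ if $l$ is even and $(-t)^{d(\phi)}$ if $l$ is odd. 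Thus for $l$ even the whole expression is $\prod_\phi 1/(1 - t^{d(\phi)}) = 1/(1 - qt)$ by Lemma \ref{poly}, while for $l$ odd it is $\prod_{d(\phi)\text{ even}} 1/(1 - t^{d(\phi)}) \cdot \prod_{d(\phi)\text{ odd}} 1/(1 + t^{d(\phi)})$, which I claim equals $1/(1 + qt)$. Granting this, the $(i,l)$ factor is $1/(1 - (-1)^l qt) = 1/(1 - (-1)^l u^i/q^{l-1})$ in either parity, and multiplying over all $i, l$ yields the theorem.

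The main obstacle is the signed identity
\[ \prod_{\phi:\, d(\phi)\text{ even}} (1 - t^{d(\phi)}) \; \prod_{\phi:\, d(\phi)\text{ odd}} (1 + t^{d(\phi)}) = 1 + qt . \]
I would prove it by base change to $\mathbb{F}_{q^2}$: a monic irreducible over $\mathbb{F}_q$ of odd degree $d$ stays irreducible over $\mathbb{F}_{q^2}$, one of even degree $d$ splits into two conjugate irreducibles of degree $d/2$, and every monic irreducible over $\mathbb{F}_{q^2}$ arises in this way exactly once. Hence $\prod_{\psi/\mathbb{F}_{q^2}} (1 - t^{2d(\psi)}) = \prod_{d(\phi)\text{ odd}} (1 - t^{2d(\phi)}) \cdot \prod_{d(\phi)\text{ even}} (1 - t^{d(\phi)})^2$. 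Applying Lemma \ref{poly} over $\mathbb{F}_q$ (with $u \mapsto t$) and over $\mathbb{F}_{q^2}$ (with $u \mapsto t^2$, so $q \mapsto q^2$) gives $\prod_{\phi/\mathbb{F}_q}(1 - t^{d(\phi)}) = 1 - qt$ and $\prod_{\psi/\mathbb{F}_{q^2}}(1 - t^{2d(\psi)}) = 1 - q^2 t^2$, whence
\[ \prod_{d(\phi)\text{ even}} (1 - t^{d(\phi)}) \prod_{d(\phi)\text{ odd}} (1 + t^{d(\phi)}) = \frac{\prod_{d(\phi)\text{ odd}}(1 - t^{2d(\phi)}) \prod_{d(\phi)\text{ even}}(1 - t^{d(\phi)})^2}{\prod_{\phi}(1 - t^{d(\phi)})} = \frac{1 - q^2 t^2}{1 - qt} = 1 + qt . \]
(Alternatively, the identity can be checked coefficient by coefficient from $\sum_{d \mid m} d\,N_d(q) = q^m$, where $N_d(q)$ is the number of monic irreducibles of degree $d$ over $\mathbb{F}_q$.) Everything else is the bookkeeping already carried out for Theorem \ref{ff}.
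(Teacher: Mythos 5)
Your proposal is correct, and its overall plan---cancel the $q^{d(\phi)\sum_i(\lambda_{\phi,i}')^2}$ factors via Lemmas \ref{jordanu} and \ref{comwithu}, turn each partition sum into an Euler product with Lemma \ref{euler}, interchange products, and close with Lemma \ref{poly}---is exactly the paper's. The one place you diverge is the last step. You split on the parity of $l$, and for $l$ odd you formulate and prove a separate ``signed'' identity
\[ \prod_{d(\phi)\ \mathrm{even}}\bigl(1-t^{d(\phi)}\bigr)\prod_{d(\phi)\ \mathrm{odd}}\bigl(1+t^{d(\phi)}\bigr)=1+qt \]
by base change to $\mathbb{F}_{q^2}$. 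That argument is correct, but it is an avoidable detour: the left-hand side is precisely $\prod_\phi\bigl(1-(-t)^{d(\phi)}\bigr)$, so the identity is just Lemma \ref{poly} evaluated at $u=-t$, giving $1-q(-t)=1+qt$. The paper sidesteps the case split entirely by writing the post-Euler factor uniformly, for every $\phi$, $i$, $l$ and both parities of $d(\phi)$, as $\bigl(1-(u^i(-1)^l/q^l)^{d(\phi)}\bigr)^{-1}$, and then applying Lemma \ref{poly} once with the substitution $u\mapsto u^i(-1)^l/q^l$; for $l$ odd this is exactly the $u\mapsto -t$ substitution, so your signed lemma is what that step silently encodes. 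In short, you rediscovered the right fact but took the scenic route to it; otherwise the proof matches the paper's.
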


\begin{proof} Let $[u^n] f(u)$ denote the coefficient of $u^n$ in an expression $f(u)$.
It follows from Lemmas \ref{jordanu} and \ref{comwithu} that $U_n$ is equal to
$|\GU(n,q)|$ multiplied by the coefficient of $u^n$ in
\[ \prod_{\phi \atop d(\phi) \ even} \sum_{\lambda} \frac{u^{|\lambda| d(\phi)}}
{\prod_i (1/q^{d(\phi)})_{m_i (\lambda)}}
\prod_{\phi \atop d(\phi) \ odd} \sum_{\lambda} \frac{u^{|\lambda| d(\phi)}}
{\prod_i (1/q^{d(\phi)})_{m_i (\lambda), q \mapsto -q}} .\] This is equal to
$|\GU(n,q)|$ multiplied by the coefficient of $u^n$ in
\[ \prod_{\phi \atop d(\phi) \ even} \prod_{i \geq 1} \sum_{m \geq 0}
\frac{u^{im \cdot d(\phi)}}{(1/q^{d(\phi)})_m}
\prod_{\phi \atop d(\phi) \ odd} \prod_i \sum_{m \geq 0} \frac{u^{im \cdot d(\phi)}}
{\prod_i (1/q^{d(\phi)})_{m, q \mapsto -q}} .\]

By Lemma \ref{euler}, this is equal to
\[ |\GU(n,q)| [u^n] \prod_{\phi} \prod_{i \geq 1} \prod_{l \geq 0}
\frac{1}{1-(u^i (-1)^l/q^l)^{d(\phi)}} .\] Switching the order of the
products and applying Lemma \ref{poly} shows that this is equal to
\begin{eqnarray*}
& &  |\GU(n,q)| [u^n] \prod_{i \geq 1} \prod_{l \geq 0} \prod_{\phi}
\frac{1}{1-(u^i (-1)^l/q^l)^{d(\phi)}} \\
& = & |\GU(n,q)| [u^n] \prod_{i \geq 1} \prod_{l \geq 0}
\frac{1}{1-(-1)^l u^i/q^{l-1}}.
\end{eqnarray*} This completes the proof.
\end{proof}

Theorem \ref{asymU} determines the asymptotics of the sequence $U_n$.

\begin{theorem} \label{asymU} For $q$ fixed,
\[ \lim_{n \rightarrow \infty} \frac{U_n}{q^{n^2+n}} = \prod_{i \ odd} (1-1/q^i)^{-1}
\prod_{i \ even} (1-1/q^i)^{- \lfloor i/4 \rfloor} .\]
\end{theorem}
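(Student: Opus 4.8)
The plan is to follow the proof of the asymptotics of $G_n$ essentially verbatim, with one extra bookkeeping step at the end. From the unitary Feit--Fine theorem just proved,
\[ U_n = |\GU(n,q)|\,[u^n]\prod_{i \geq 1}\prod_{l \geq 0}\frac{1}{1-(-1)^l u^i/q^{l-1}} . \]
First I would use $|\GU(n,q)| = q^{n^2}\prod_{j=1}^n\bigl(1-(-1/q)^j\bigr)$ and substitute $u \mapsto u/q$ in the product, which replaces each $q^{l-1}$ by $q^{i+l-1}$ and pulls out a factor $q^n$ from the coefficient; this gives
\[ U_n = q^{n^2+n}\prod_{j=1}^n\bigl(1-(-1/q)^j\bigr)\,[u^n]\prod_{i \geq 1}\prod_{l \geq 0}\frac{1}{1-(-1)^l u^i/q^{i+l-1}} . \]
Dividing by $q^{n^2+n}$ and letting $n \to \infty$, the finite product converges to $\prod_{j \geq 1}\bigl(1-(-1/q)^j\bigr)$, so the whole problem reduces to the limit of the displayed coefficient.

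For that I would isolate the single factor of the infinite product that is singular at $u = 1$. For $(i,l)$ with $i \geq 1$, $l \geq 0$, one has $|(-1)^l/q^{i+l-1}| \geq 1$ only for $(i,l) = (1,0)$, where the factor is $1/(1-u)$; setting $h(u) := \prod_{(i,l) \neq (1,0)}\frac{1}{1-(-1)^l u^i/q^{i+l-1}}$ the product equals $h(u)/(1-u)$, and the Taylor series of $h$ at $0$ converges absolutely at $u = 1$ because $\sum_{(i,l) \neq (1,0)} q^{-(i+l-1)} < \infty$. Lemma \ref{tay} then gives
\[ \lim_{n \to \infty}\frac{U_n}{q^{n^2+n}} = \prod_{j \geq 1}\bigl(1-(-1/q)^j\bigr)\cdot\prod_{(i,l) \neq (1,0)}\frac{1}{1-(-1)^l/q^{i+l-1}} . \]

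The remaining step, which I expect to be the main (though entirely elementary) obstacle, is to rewrite this product in the asserted closed form. I would group the factors of the second product by the value of $m := i + l - 1 \geq 1$: for fixed $m$, as $i$ runs from $1$ to $m+1$ the sign $(-1)^l = (-1)^{m+1-i}$ alternates, and counting the odd and even $i$ separately shows the contribution of $m$ is $(1-q^{-m})^{-(m/2+1)}(1+q^{-m})^{-m/2}$ if $m$ is even and $(1-q^{-2m})^{-(m+1)/2}$ if $m$ is odd. Then I would eliminate every $(1+q^{-j})$ factor (those arising here and those from the odd $j$ in $\prod_j(1-(-1/q)^j)$) via $1+q^{-j} = (1-q^{-2j})(1-q^{-j})^{-1}$, and collect the total exponent of $(1-q^{-r})$, splitting into $r$ odd, $r \equiv 2 \pmod 4$, and $r \equiv 0 \pmod 4$. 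A short computation gives exponents $-1$, $-(r-2)/4$, and $-r/4$ in the three cases; since $\lfloor r/4 \rfloor = (r-2)/4$ for $r \equiv 2 \pmod 4$ and $\lfloor r/4 \rfloor = r/4$ for $r \equiv 0 \pmod 4$, this is exactly $\prod_{i\ \mathrm{odd}}(1-1/q^i)^{-1}\prod_{i\ \mathrm{even}}(1-1/q^i)^{-\lfloor i/4\rfloor}$, completing the proof. The one point needing a little care is the convergence of the Taylor series of $h$ at $u = 1$ noted above, which is immediate from the geometric decay of the factors, exactly as in the $\GL$ case.
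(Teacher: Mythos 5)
Your proposal is correct and follows essentially the same route as the paper: the substitution $u \mapsto u/q$ to rewrite the product, the extraction of the $1/(1-u)$ factor from $(i,l)=(1,0)$, the application of Lemma \ref{tay}, and the regrouping of the resulting constant product by $m = i+l-1$ are all exactly what the paper does. Your final bookkeeping is a bit more explicit than the paper's (which silently pairs each $m$-block with the $j=m$ factor of $\prod_j(1-(-1/q)^j)$ before collecting $(1\pm 1/q^k)$ terms), but it is the same calculation and produces the same exponents.
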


\begin{proof} We know that
\begin{eqnarray*}
U_n & = & |\GU(n,q)| [u^n] \prod_{i \geq 1} \prod_{l \geq 0} \frac{1}{1-(-1)^l u^i/q^{l-1}} \\
& = & |\GU(n,q)| q^n [u^n] \prod_{i \geq 1} \prod_{l \geq 0} \frac{1}{1-(-1)^l u^i/q^{i+l-1}}.
\end{eqnarray*}

Thus \[ \lim_{n \rightarrow \infty} \frac{U_n}{q^{n^2+n}} = \prod_{j \geq 1} (1-(-1)^j/q^j)
\lim_{n \rightarrow \infty} [u^n] \prod_{i \geq 1} \prod_{l \geq 0} \frac{1}{1-(-1)^l u^i/q^{i+l-1}} .\]
Since the $i=1,l=0$ case of $1/(1-(-1)^l u^i/q^{i+l-1})$ is equal to $1/(1-u)$, it follows from
Lemma \ref{tay} that
\begin{eqnarray*}
\lim_{n \rightarrow \infty} \frac{U_n}{q^{n^2+n}} & = & \frac{1}{(1-1/q)} \cdot \frac{1}{(1+1/q^2)(1-1/q^2)} \\
& & \cdot \frac{1}{(1-1/q^3)(1+1/q^3)(1-1/q^3)} \\
& & \cdot \frac{1}{(1+1/q^4)(1-1/q^4)(1+1/q^4)(1-1/q^4)} \cdots \\
& = & \prod_{i \ odd} (1-1/q^i)^{-1} \cdot \frac{1}{(1+1/q^2)(1-1/q^2)} \\
& & \cdot \frac{1}{(1+1/q^3)(1-1/q^3)} \\
& & \cdot \frac{1}{(1+1/q^4)(1-1/q^4)(1+1/q^4)(1-1/q^4)} \cdots.
\end{eqnarray*} Writing $(1+1/q^k)(1-1/q^k)=(1-1/q^{2k})$ shows that this is equal to
\[ \prod_{i \ odd} (1-1/q^i)^{-1}
\prod_{i \ even} (1-1/q^i)^{- \lfloor i/4 \rfloor}. \]
\end{proof}

{\it Remark:} Let $\Com(G)$ denote the number of ordered pairs of commuting elements of a finite group $G$. As
mentioned earlier, $\Com(G)$ is equal to $k(G)|G|$, where $k(G)$ is the number of conjugacy classes of $G$. From
\cite{FG},
\[ \lim_{n \rightarrow \infty} \frac{k(\GU(n,q))}{q^n} = \prod_{i \geq 1} \frac{(1+1/q^i)}{(1-1/q^i)}.\] Thus,
\begin{eqnarray*}
\lim_{n \rightarrow \infty} \frac{\Com(\GU(n,q))}{q^{n^2+n}} & = & \prod_{i \geq 1} \frac{(1+1/q^i)}{(1-1/q^i)}
\lim_{n \rightarrow \infty} \frac{|\GU(n,q)|}{q^{n^2}} \\
& = & \prod_{i \ odd} \frac{(1+1/q^i)}{(1-1/q^i)} \prod_{i \geq 1} (1+1/q^i).
\end{eqnarray*}

Next we enumerate pairs of commuting nilpotent matrices in the Lie algebra of $\GU(n,q)$. Let $NU_0=1$, and
for $n \geq 1$, let $NU_n$ be the number of ordered pairs of commuting $n \times n$ nilpotent matrices in the Lie
algebra of $\GU(n,q)$.

\begin{theorem} \label{nilU}
\[ \sum_{n \geq 0} \frac{NU_n u^n}{|\GU(n,q)|} = \prod_{i \geq 1} \prod_{l \geq 1} \frac{1}{(1+(-1)^l u^i/q^l)}. \]
\end{theorem}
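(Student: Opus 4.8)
The plan is to mirror exactly the proof of Theorem \ref{nilGL}, but using the unitary centralizer data from Lemma \ref{jordanu} and Lemma \ref{comwithu} in place of their $\GL$ counterparts. First I would establish the unitary analog of Lemma \ref{countnilcent}: the number of nilpotent elements in the Lie algebra of $\GU(n,q)$ commuting with a fixed nilpotent element of type $\lambda$. By the structure of $C_G(A)$ worked out in the lemmas above, the reductive part of the centralizer of a nilpotent element of type $\lambda$ is $\prod_i \GL^{\epsilon}(m_i(\lambda),q)$ with $\epsilon = -$ since the relevant polynomial is $\phi(x)=x$ of degree $1$ (odd), and the nilpotent elements of the Lie algebra of $\GU(m,q)$ number $q^{m^2-m}$ (the same count as in the $\GL$ case, since the dimension of the nilpotent variety is the same; one can cite \cite{Lh} again or argue via the $F_0$-form decomposition $\Mat(m,q^2)=\fh\oplus\alpha\fh$ used earlier). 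Hence the count of nilpotents commuting with a type-$\lambda$ nilpotent is $q^{\sum_i (\lambda_i')^2 - \sum_i m_i(\lambda)}$, exactly as in the $\GL$ case.

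Next, Lemma \ref{jordanu} specialized to the single polynomial $\phi(x)=x$ (degree $1$, odd) gives that the number of nilpotent elements of type $\lambda$ in the Lie algebra of $\GU(n,q)$ is
\[ \frac{|\GU(n,q)|}{q^{\sum_i (\lambda_i')^2} \prod_i (1/q)_{m_i(\lambda),\, q \mapsto -q}} .\]
Combining this with the centralizer count above,
\[ NU_n = |\GU(n,q)| \sum_{|\lambda|=n} \frac{1}{\prod_i (1/q)_{m_i(\lambda),\, q \mapsto -q}\; q^{m_i(\lambda)}} = |\GU(n,q)| \,[u^n] \prod_{i \geq 1} \sum_{m \geq 0} \frac{u^{im}}{(1/q)_{m,\, q\mapsto -q}\; q^m} .\]

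Finally I would evaluate the inner sum. Replacing $q$ by $-q$ in Lemma \ref{euler} and then substituting $u \mapsto u^i/q$ (being careful with the $q^m$ factor, which becomes $(-q)^m$ up to sign bookkeeping) yields
\[ \sum_{m \geq 0} \frac{u^{im}}{(1/q)_{m,\, q\mapsto -q}\; q^m} = \prod_{l \geq 1} \frac{1}{1+(-1)^l u^i/q^l}, \]
and substituting this into the product over $i$ gives the claimed formula. The main obstacle is the bookkeeping of signs in this last step: one must check that replacing $q$ by $-q$ inside $(1/q)_m$ while keeping the genuine $q^m$ in the denominator produces precisely $\prod_{l\ge 1}(1+(-1)^l u^i/q^l)^{-1}$ rather than some variant with the wrong sign pattern; I would verify this by expanding $(1/(-q))_m = \prod_{j=1}^m (1 - (-1)^j/q^j)$ and tracking how the extra $q^{-m}$ shifts the index, so that the $m$-th term of the $q\mapsto -q$ Euler identity with argument $u^i/q$ lands on the product $\prod_{l\ge 1}(1+(-1)^l u^i/q^l)^{-1}$. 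A secondary point to be careful about is justifying that the nilpotent count $q^{m^2-m}$ for the Lie algebra of $\GU(m,q)$ is correct and independent of the unipotent-radical contribution, which follows since that radical contributes a full power of $q$ equal to its $\GL$-over-$\bar{\F}_q$ dimension, exactly as in Lemma \ref{countnilcent}.
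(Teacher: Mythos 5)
Your proposal is correct and follows essentially the same route as the paper: you reconstruct Lemma \ref{countnilcentU} (via the reductive-times-unipotent-radical decomposition, which is exactly how the paper's Lemma \ref{lem:nilpotent} is proved), then combine it with Lemma \ref{jordanu} and apply Euler's identity with the substitution $q \mapsto -q$. The sign bookkeeping you flag does work out: substituting first $q\mapsto -q$ then $u\mapsto u^i/q$ yields $\prod_{l\ge 1}(1+(-1)^l u^i/q^l)^{-1}$ after the index shift $l\mapsto l+1$, matching the paper's order of operations (first $u\mapsto -u^i/q$, then $q\mapsto -q$).
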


We next state a lemma about counting nilpotent elements in a Lie subalgebra.

\begin{lemma} \label{lem:nilpotent}   Let $G$ be a connected algebraic group with Lie algebra $\frak{g}$ over an algebraically
closed field of characteristic $p$.   Let $F$ be a Lang-Steinberg endomorphism of $G$ (so $G^F$ is a finite group).
The number of nilpotent elements in $(\mathrm{Lie}(G))^F$ is  $q^{\dim G  - \rank G}$ (where $q=q_F$ is the size of the field
associated with $F$).
\end{lemma}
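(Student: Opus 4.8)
The plan is to identify the quantity with the number of $\mathbb F_q$-rational points of the nilpotent variety $\mathcal N\subseteq\mathfrak g=\operatorname{Lie}(G)$, reduce to the case where $G$ is reductive, and then appeal to the Springer--Steinberg count of nilpotent (equivalently, unipotent) elements. Throughout, ``nilpotent'' means conjugate into $\operatorname{Lie}(R_u(B))$ for some Borel subgroup $B$ (equivalently, nilpotent as an endomorphism in any faithful representation), and $F$ denotes the given Lang--Steinberg endomorphism, with $q=q_F$.

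First I would carry out the reduction to reductive $G$. Let $R=R_u(G)$, $\mathfrak r=\operatorname{Lie}(R)$, and let $\pi\colon G\to\bar G:=G/R$ be the reductive quotient with differential $d\pi\colon\mathfrak g\to\bar{\mathfrak g}$. Since $R$ lies in $R_u(B)$ for every Borel $B$, one has $\mathfrak r\subseteq\operatorname{Ad}(g)\operatorname{Lie}(R_u(B))$ for all $g\in G$, so translating by an element of $\mathfrak r$ preserves nilpotence; and $d\pi$ maps $\operatorname{Lie}(R_u(B))$ onto $\operatorname{Lie}(R_u(B/R))$. Hence $x$ is nilpotent if and only if $d\pi(x)$ is, i.e. $\mathcal N=d\pi^{-1}(\bar{\mathcal N})$. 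Choosing an $F$-stable (hence $\mathbb F_q$-rational) linear splitting $\mathfrak g\cong\bar{\mathfrak g}\oplus\mathfrak r$ — possible because $F-1$ is surjective on the vector group $\mathfrak r$ — identifies $\mathcal N$ with $\bar{\mathcal N}\times\mathfrak r$ over $\mathbb F_q$, so that $|\mathcal N^F|=q^{\dim\mathfrak r}\,|\bar{\mathcal N}^F|$. Because $\operatorname{rank}G=\operatorname{rank}\bar G$ and $\dim\mathfrak r=\dim G-\dim\bar G$, it then suffices to treat reductive $G$.

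For reductive $G$ I would proceed as follows. It is a theorem of Springer and Steinberg that the nilpotent variety of a connected reductive group has precisely $q^{\dim G-\operatorname{rank}G}$ rational points; one can simply quote this. The more self-contained route, which also keeps everything explicit in the cases needed in this paper, is to transport Steinberg's count of unipotent elements across an $F$-equivariant Springer isomorphism $\varepsilon\colon\mathcal N\xrightarrow{\ \sim\ }\mathcal U$ of the nilpotent variety onto the unipotent variety of $G$: one may take $\varepsilon(x)=I+x$ for $\GL_n$ (in any characteristic) and the Cayley transform $\varepsilon(x)=(I-\tfrac12 x)^{-1}(I+\tfrac12 x)$ for the symplectic and unitary Lie algebras in odd characteristic — each of these is an algebraic bijection between nilpotents and unipotents, $G$-equivariant, and commutes with $F$, so $|\mathcal N^F|=|\mathcal U^F|$. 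Steinberg's theorem that $G^F$ has exactly $q^{\dim G-\operatorname{rank}G}$ unipotent elements then finishes the proof.

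The step I expect to need the most care is the existence of an $F$-equivariant Springer isomorphism (equivalently, knowing the Springer--Steinberg count in the generality at hand): in bad characteristic the nilpotent and unipotent varieties can fail to be isomorphic even abstractly, which is exactly why the paper restricts to type $A$ (where all characteristics are good) and to odd characteristic in type $C$. Within those cases there is no difficulty, since the maps above are explicit. The only other thing to verify with any care is the elementary claim used in the reduction, that nilpotence in $\mathfrak g$ is detected on $\mathfrak g/\mathfrak r$; everything else is bookkeeping with dimensions and ranks.
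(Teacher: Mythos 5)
Your argument is correct and follows essentially the same two-step strategy as the paper: reduce to the reductive case by peeling off the unipotent radical $R_u(G)$, then invoke the known count of nilpotent elements in the Lie algebra of a connected reductive group. The paper's reduction step is extremely terse (it merely notes that $\mathfrak{h}=\operatorname{Lie}(R_u(G))$ consists of nilpotents and has $q^{\dim R_u(G)}$ rational points), whereas you spell out why $\mathcal N=d\pi^{-1}(\bar{\mathcal N})$ and how the fibration gives the factor $q^{\dim\mathfrak r}$; this is exactly the intended argument, made explicit. One small remark: you do not actually need an $F$-stable \emph{linear splitting} of $\mathfrak g\twoheadrightarrow\bar{\mathfrak g}$. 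It is enough that $d\pi\colon\mathfrak g^F\to\bar{\mathfrak g}^F$ is surjective with every fibre a coset of $\mathfrak r^F$; surjectivity is immediate from the vanishing of $H^1(F,\mathfrak r)$ for the vector group $\mathfrak r$ (Lang--Steinberg), and the fibre count then gives $|\mathcal N^F|=q^{\dim\mathfrak r}\,|\bar{\mathcal N}^F|$ without invoking a splitting. For the reductive case the paper cites Lehrer's result (valid in all characteristics), while you additionally offer an explicit $F$-equivariant Springer isomorphism ($x\mapsto I+x$ for $\GL_n$, Cayley transform in odd characteristic for the symplectic and unitary cases) transported to Steinberg's unipotent count; this is a nice self-contained supplement for the cases actually used in the paper, and you correctly flag that it does not cover bad characteristic, which is why the paper prefers Lehrer's reference for the lemma in the generality stated.
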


\begin{proof}   Let $Q$ be the unipotent radical of $G$.  Of course $Q$ is $F$-invariant.   Let $\frak{h}$ be the Lie algebra
of $Q$. Then $\frak{h}$ consists of nilpotent elements and $\frak{h}^F$ will have cardinality  $q^{\dim Q}$.
Thus, we can pass to the case that $G$ is reductive.   In that case, we just apply a result of Lehrer \cite[Cor. 1.15]{Le} (this
also follows by a result of Steinberg \cite{St} in good characteristic -- Steinberg proves the corresponding result for unipotent
elements).
\end{proof}

\begin{lemma} \label{countnilcentU}
The number of nilpotent $n \times n$ matrices which are in the Lie algebra of $\GU(n,q)$ and which commute with a fixed nilpotent matrix
of type $\lambda$ in the Lie algebra of $\GU(n,q)$ is equal to
\[ q^{\sum_i (\lambda_i')^2 - \sum_i m_i(\lambda)} .\]
\end{lemma}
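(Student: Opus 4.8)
The plan is to mimic the proof of Lemma \ref{countnilcent} from the $\GL$ case, replacing the input about counting nilpotent matrices in $\Mat(m,q)$ with the unitary analog supplied by Lemma \ref{lem:nilpotent}. First I would recall from the structure theory developed above (the lemma computing $C_G(A)$ for $A \in \fh$) that the centralizer in the Lie algebra $\fh$ of a nilpotent element of type $\lambda$ decomposes, modulo its nilpotent radical, as a direct sum of smaller unitary Lie algebras: concretely, the reductive part of $C_{\GU(n,q)}(x)$ for $x$ nilpotent of type $\lambda$ is $\prod_i \GU(m_i(\lambda),q)$ (this is the $d(\phi)=1$, $\phi=x$ specialization of that lemma, where $d$ is odd so $\epsilon = -$), and correspondingly the centralizer of $x$ inside $\fh$ is an extension of $\bigoplus_i \mathrm{Lie}(\GU(m_i(\lambda),q))$ by a nilpotent ideal of $F_0$-dimension equal to $\sum_i (\lambda_i')^2 - \sum_i m_i(\lambda)$.

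Next I would count nilpotent elements in this centralizer. Every element of the nilpotent radical is nilpotent, contributing a factor of $q^{\sum_i (\lambda_i')^2 - \sum_i m_i(\lambda)}$, and the number of nilpotents lying over a coset is governed by the reductive quotient. Applying Lemma \ref{lem:nilpotent} to $G = \prod_i \GU(m_i(\lambda),q)$, whose Lie algebra has dimension $\sum_i m_i(\lambda)^2$ and rank $\sum_i m_i(\lambda)$, gives $q^{\sum_i m_i(\lambda)^2 - \sum_i m_i(\lambda)}$ nilpotent elements there. Here I would need the standard fact that in such an extension the nilpotent elements are exactly those whose image in the reductive quotient is nilpotent — this is true because an element is nilpotent iff its semisimple part vanishes, and the semisimple part is detected in the reductive quotient (the nilpotent radical being an ideal of nilpotent elements). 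Multiplying, the total count is $q^{\sum_i (\lambda_i')^2 - \sum_i m_i(\lambda)} \cdot q^{\sum_i m_i(\lambda)^2 - \sum_i m_i(\lambda)}$, which does \emph{not} immediately match the claimed $q^{\sum_i (\lambda_i')^2 - \sum_i m_i(\lambda)}$ unless $\sum_i m_i(\lambda)^2 = \sum_i m_i(\lambda)$, i.e. always. So in fact I should reexamine: the dimension of $\fh$ as an $F_0$-algebra is $n^2$ (same as $\Mat(n,q)$), and the centralizer of $x$ in $\fh$ has $F_0$-dimension $\sum_i (\lambda_i')^2$ by Lemma \ref{comwithu} (whose exponent is the centralizer dimension); the nilpotent radical has codimension $\sum_i m_i(\lambda)^2$ inside it, occupied by $\bigoplus_i \mathrm{Lie}(\GU(m_i,q))$. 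Lemma \ref{lem:nilpotent} then says the number of nilpotents in that reductive-Lie-algebra quotient is $q^{\sum_i m_i^2 - \sum_i m_i}$, and since each fiber of the quotient map restricted to nilpotents has size $q^{\dim(\text{nilradical})} = q^{\sum_i (\lambda_i')^2 - \sum_i m_i^2}$, the product is $q^{\sum_i (\lambda_i')^2 - \sum_i m_i}$, exactly as claimed.

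So the clean write-up is: invoke the centralizer description to identify $C_{\fh}(x)$ as having nilpotent radical of dimension $\sum_i (\lambda_i')^2 - \sum_i m_i(\lambda)^2$ with reductive-part Lie algebra $\bigoplus_i \mathrm{Lie}(\GU(m_i(\lambda),q))$; note that an element of $C_{\fh}(x)$ is nilpotent iff its image modulo the nilpotent radical is nilpotent; apply Lemma \ref{lem:nilpotent} to each $\GU(m_i(\lambda),q)$ to get $\prod_i q^{m_i(\lambda)^2 - m_i(\lambda)}$ nilpotents in the quotient; and multiply by $q^{\sum_i (\lambda_i')^2 - \sum_i m_i(\lambda)^2}$ for the radical. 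The main obstacle — really the only nonroutine point — is justifying cleanly that nilpotency in $C_{\fh}(x)$ is detected by the reductive quotient; I would handle this via the Jordan decomposition in the restricted Lie algebra (or simply by passing to the algebraic closure, where $C_{\Mat(n,k)}(x)$ is a Lie algebra whose nilradical consists of nilpotents and whose reductive quotient $\bigoplus_i \mathfrak{gl}_{m_i}(k)$ detects semisimple parts, then taking $F$-fixed points), citing the earlier centralizer lemma for the identification of the reductive part with a product of unitary groups.
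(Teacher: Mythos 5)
Your proof is correct and in the end reaches the same count, but it is a slightly more circuitous version of the paper's argument. The paper applies Lemma \ref{lem:nilpotent} \emph{once}, directly to the full connected centralizer $G = C_{\GL(n,k)}(A)$: its dimension is $\sum_i (\lambda_i')^2$ (this is the exponent in Lemma \ref{comwithu}), its rank is $\sum_i m_i(\lambda)$ (the reductive part is $\prod_i \GL(m_i(\lambda),k)$), and Lemma \ref{lem:nilpotent} is stated for an arbitrary connected algebraic group, not just a reductive one, so no further decomposition is needed. You instead unroll that application: you split $C_{\fh}(A)$ into its nilradical (dimension $\sum_i (\lambda_i')^2 - \sum_i m_i(\lambda)^2$, all of whose elements are nilpotent) and the reductive quotient $\prod_i \GU(m_i(\lambda),q)$, apply Lemma \ref{lem:nilpotent} only to the latter to get $q^{\sum_i m_i^2 - \sum_i m_i}$, and multiply. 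This is valid — and the point you flag as nonroutine (nilpotency in $C_{\fh}(A)$ being detected in the reductive quotient) is exactly the ``pass to the reductive case'' step already carried out inside the proof of Lemma \ref{lem:nilpotent}, so you are essentially re-deriving the first reduction of that lemma rather than using it as a black box. Two small remarks for a clean write-up: excise the visible self-correction (the middle paragraph with the misstated nilradical dimension $\sum_i (\lambda_i')^2 - \sum_i m_i(\lambda)$, later corrected to $\sum_i (\lambda_i')^2 - \sum_i m_i(\lambda)^2$); and note you do not actually need the identification of the reductive quotient as a product of \emph{unitary} groups $\GU(m_i,q)$ rather than general linear groups, since Lemma \ref{lem:nilpotent} depends only on $\dim$ and $\rank$ of the ambient algebraic groups $\GL(m_i,k)$, which is how the paper phrases it.
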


\begin{proof}    Let $A \in \frak{h}$ be nilpotent of type $\lambda$.    As we have already noted,  the dimension of the centralizer of $A$ is
$ \sum_i (\lambda_i')^2$.    We have also seen that the reductive part of the centralizer of $A$
is a direct product of $\GL(m_i(\lambda_i))$, whence    the rank of the centralizer is $\sum_i m_i(\lambda)$.  Thus,
the result follows by Lemma \ref{lem:nilpotent}.
\end{proof}

Now we prove Theorem \ref{nilU}.

\begin{proof} (Of Theorem \ref{nilU}) From Lemma \ref{jordanu} we know that the total number of elements
of the Lie algebra of $\GU(n,q)$ which are nilpotent of type $\lambda$ is equal to
\[ \frac{|\GU(n,q)|}{q^{\sum (\lambda_i')^2} \prod_i (1/q)_{m_i(\lambda),q \mapsto -q}}.\]
Here the notation $(1/q)_{m_i(\lambda),q \mapsto -q}$ means that we replace $q$ by $-q$ in
the expression $(1/q)_{m_i(\lambda)}$.

Combining this with Lemma \ref{countnilcentU} gives that
\begin{eqnarray*}
NU_n & = & |\GU(n,q)| \sum_{|\lambda|=n} \frac{1}{\prod_i q^{m_i(\lambda)} (1/q)_{m_i(\lambda),q \mapsto -q}} \\
& = & |\GU(n,q)| [u^n] \prod_{i \geq 1} \sum_{m \geq 0} \frac{u^{im}}{q^m (1/q)_{m,q \mapsto -q}} \\
& = & |\GU(n,q)| [u^n] \prod_{i \geq 1} \sum_{m \geq 0} \frac{[(-1) u^i]^m}{[q^m (1/q)_m]_{q \mapsto -q}}.
\end{eqnarray*} It follows from Lemma \ref{euler} that
\[ \sum_{m \geq 0} \frac{[(-1) u^i]^m}{q^m (1/q)_m} = \prod_{l \geq 1} \frac{1}{1+u^i/q^l}.\] Replacing $q$ by $-q$
gives that
\[ \sum_{m \geq 0} \frac{[(-1) u^i]^m}{[q^m (1/q)_m]_{q \mapsto -q}} = \prod_{l \geq 1} \frac{1}{1+(-1)^l u^i/q^l}.\]

Hence \[ NU_n = |\GU(n,q)| [u^n] \prod_{i \geq 1} \prod_{l \geq 1} \frac{1}{(1+(-1)^l u^i/q^l)} .\]
\end{proof}

\section{Symplectic Groups} \label{Sp}

Here we consider the symplectic case.  There are several extra difficulties in characteristic $2$.
So let $F=\F_q$ with $q$ an odd prime power.  Let $G=\Sp(2n,q)$.   Let $\fs$ be the Lie algebra of $G$.   This can be viewed in a number
of ways.

One way is as follows.  Let $J$ denote the skew symmetric matrix defining $G$ (i.e. $G$ is the stabilizer of $J$).   Then

$$
\fs := \{ A  \in \Mat(n,q) | AJ = (AJ)^{\top} \}.
$$

We can also view $\fs$ as the linear transformations on the natural module $V$ for $G$ which are self adjoint
with respect to the alternating form defining $G$.   Note that $V$ is a self-dual $G$-module and so $V \cong V^*$
(where $V^*$ denotes the dual module).

First let us make some remarks if we work over the algebraic closure $k$ of $F$.   Note that if $x \in \fs$, then
if $0 \ne \alpha$ is an eigenvalue of $x$, so is $-\alpha$ and the generalized $\alpha$ eigenspace of $x$
is totally singular.   The sum of the generalized $\alpha$ and $-\alpha$ eigenspaces is nonsingular and
the Jordan form of $x$ on the $\alpha$ space is the same as the $-\alpha$ space.  Moreover the centralizer
in $\Sp(2m)$ is just the centralizer of $x$ in $\GL(m)$ (on the $\alpha$) space.  In particular, the centralizer
is connected.

We first compute the dimension of the centralizer of an element $x \in \fs$.   As usual, we reduce to the
case where the characteristic polynomial of $x$ is a power of an irreducible polynomial $\phi(x)$ or the
product of two irreducible polynomials  $\phi_i(x)$ where $\phi_2(x) = (-1)^{d}\phi_1(-x)$ where $d$
is the degree of $\phi_i$.   In the first case note
that either $\phi(x) =x$ or $\phi(x)$ has even degree $2d$ (and $\phi(x)=\phi(-x)$).

First consider the case that $\phi(x) = x$ (i.e. $x$ is nilpotent).    Then the orbit of $x$ under $\Sp(2n,k)$ is determined
by the Jordan form of $x$ which can be described as a partition of $n$.
Let $\lambda_1 \ge \lambda_2 \ge \ldots \ge \lambda_r$
(such a partition corresponds to an element of $\fs$ if and only if the multiplicity of the odd pieces in the partition is even).

Recall that $\fs$ as a module either for $G$ or for the Lie algebra is isomorphic to $\Sym^2(V)$ and this is a direct
summand of $V \otimes V$ (which is isomorphic to $V \otimes V^*$ as a module for $x$) since we are assuming
that $q$ is odd.  Since the dimension of
the fixed point space on $V \otimes V$ depends only on the partition and not on the characteristic, the same is true
for the dimension of the fixed point space on $\Sym^2(V)$.   This is well known (for example, this can be computed
in characteristic $0$ using the decomposition of $\Sym^2(V)$ as an $\SL(2)$-module). This proves the first expression
in the following lemma.

\begin{lemma} \label{nilpdimsp}  Let $x \in \fs$ be nilpotent with Jordan block sizes $\lambda_1 \ge \lambda_2 \ge \ldots \ge \lambda_r$
on the natural module.  Then the dimension of the centralizer of $x$ in $\fs$ is equal to both:
\begin{enumerate}
\item
$$
\sum_{j < i}  \lambda_i   + \sum_i  \lceil \lambda_i/2 \rceil,
$$
and
\item \[ \sum_i (\lambda_i')^2/2 + o(\lambda)/2, \] where $o(\lambda)$ is the number of odd parts of $\lambda$.
\end{enumerate}
\end{lemma}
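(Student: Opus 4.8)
The first formula is already in hand: the paragraph preceding the lemma identifies $\fs$ with $\Sym^2(V)$ as a module for the $\mathfrak{sl}_2$-triple through $x$, so $\dim C_{\fs}(x) = \dim\ker(x|_{\Sym^2 V})$ equals the number of indecomposable Jordan blocks of $x$ on $\Sym^2(V)$, and a Clebsch--Gordan computation over $\mathbb{C}$ (legitimate here because the dimension of the fixed space of $x$ on any of $V\otimes V$, $\Sym^2 V$, $\wedge^2 V$ depends only on the partition $\lambda$) evaluates that count as $\sum_{j<i}\lambda_i + \sum_i\lceil\lambda_i/2\rceil$. So the real task is to prove formula (2), and for this I would simply show that (1) and (2) coincide as functions of $\lambda$.

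The reconciliation is a short partition identity resting on the computation already carried out in the proof of Lemma \ref{comwith}, namely $\sum_{i,j}\min(\lambda_i,\lambda_j) = \sum_k(\lambda_k')^2$, the sum on the left running over all ordered pairs. Splitting off the diagonal and using that $\lambda$ is weakly decreasing (so $\min(\lambda_i,\lambda_j)=\lambda_i$ whenever $j<i$) gives $\sum_k(\lambda_k')^2 = |\lambda| + 2\sum_{j<i}\lambda_i$, i.e. $\sum_{j<i}\lambda_i = \tfrac12\bigl(\sum_k(\lambda_k')^2 - |\lambda|\bigr)$. On the other hand $2\lceil\lambda_i/2\rceil - \lambda_i$ equals $1$ when $\lambda_i$ is odd and $0$ when $\lambda_i$ is even, so $\sum_i\lceil\lambda_i/2\rceil = \tfrac12(|\lambda| + o(\lambda))$. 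Adding the two displays, the $|\lambda|$ terms cancel and formula (1) becomes $\tfrac12\sum_k(\lambda_k')^2 + \tfrac12 o(\lambda)$, which is formula (2).

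If one prefers a route that proves (2) directly (and re-derives (1) along the way), I would instead count the $\mathfrak{sl}_2$-summands of $\Sym^2(V)$ via $V\otimes V = \Sym^2(V)\oplus\wedge^2(V)$: writing $V=\bigoplus_i V(\lambda_i-1)$ with $V(m)$ the irreducible of dimension $m+1$, Clebsch--Gordan gives that $V(\lambda_i-1)^{\otimes 2}$ has $\lambda_i$ summands, of which $\lceil\lambda_i/2\rceil$ lie in $\Sym^2$ and $\lfloor\lambda_i/2\rfloor$ in $\wedge^2$, while each cross term $V(\lambda_i-1)\otimes V(\lambda_j-1)$ with $i<j$ contributes $\min(\lambda_i,\lambda_j)$ summands to each of $\Sym^2(V)$ and $\wedge^2(V)$. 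Hence the number of summands of $V\otimes V$ is $\sum_{i,j}\min(\lambda_i,\lambda_j)=\sum_k(\lambda_k')^2$ by Lemma \ref{comwith}, while the number of summands of $\Sym^2(V)$ minus that of $\wedge^2(V)$ is $\sum_i(\lceil\lambda_i/2\rceil-\lfloor\lambda_i/2\rfloor)=o(\lambda)$ (the cross terms cancelling); averaging gives $\dim C_{\fs}(x)=\tfrac12\sum_k(\lambda_k')^2+\tfrac12 o(\lambda)$, and adding the cross-term contribution back into the $\Sym^2$ count recovers (1).

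I do not anticipate a genuine obstacle; the only point demanding care is the one already flagged in the text -- that the first formula really equals $\dim\ker(x|_{\Sym^2 V})$ and that this, together with $\dim\ker(x|_{\wedge^2 V})$ and $\dim\ker(x|_{V\otimes V})$, is insensitive to the (odd) characteristic of $F$, so that the characteristic-zero $\mathrm{SL}(2)$ computation transfers. Everything after that is the elementary Clebsch--Gordan bookkeeping above together with the partition identity $\sum_{i,j}\min(\lambda_i,\lambda_j)=\sum_k(\lambda_k')^2$ proved earlier in the excerpt.
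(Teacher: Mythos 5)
Your main argument takes essentially the same approach as the paper: both treat formula~(1) as already established by the discussion preceding the lemma and then prove $(1)=(2)$ by elementary partition algebra. The only cosmetic difference is which identity does the work: the paper writes $\sum_{j<i}\lambda_i=\sum_i(i-1)\lambda_i=\sum_j\binom{\lambda_j'}{2}$ and then absorbs $|\lambda|/2$ to reach $\tfrac12\sum_j(\lambda_j')^2$, whereas you reuse the identity $\sum_{i,j}\min(\lambda_i,\lambda_j)=\sum_k(\lambda_k')^2$ already proved in Lemma~\ref{comwith} and peel off the diagonal. These are trivially equivalent routes to the same reconciliation. Your second, optional derivation via Clebsch--Gordan is a pleasant way to re-derive~(1) explicitly (the paper only alludes to the $\SL(2)$-module computation in the paragraph above the lemma, and does not spell it out), but it is not needed and does not change the verdict: the proposal is correct and in substance matches the paper's proof.
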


\begin{proof} The expression in part 1 of the lemma is equal to
\begin{eqnarray*}
\sum_{j<i} \lambda_i + \sum_i \lambda_i/2 + o(\lambda)/2 & = &
\sum_i \lambda_i (i-1) + |\lambda|/2 + o(\lambda)/2 \\
& = & \sum_i {\lambda_i' \choose 2} + |\lambda|/2 + o(\lambda)/2 \\
& = &  \sum_i (\lambda_i')^2/2 + o(\lambda)/2.
\end{eqnarray*}
\end{proof}

Now consider the case that $\phi(x) \ne x$.

Then the class of the element is determined by $\phi(x)$ (or the $\phi_i(x)$) and a partition of $n/d$.

To compute the dimension of the centralizer of $x$ in $\fs$, we can work over the algebraic closure
(or any extension field) and so reduce to the case where the characteristic polynomial is a power
of $\phi_1(x) \phi_2(x)$.   Then we can write $V = V_1 \oplus V_2$ where each $V_i$ is a maximal totally
singular subspace, $V_i$ is invariant under $x$ and the characteristic polynomial of $x$ on $V_i$
is a power of $\phi_i(x)$.   Then we see that the centralizer of $x$ in $\fs$ is isomorphic to the centralizer of
$x_i$ (the restriction of $x$ to $V_i)$ in the algebra of linear transformations of $V_i$ and so
the result for $\GL$ gives Theorem \ref{countcentSp} below.

Recall that Jordan canonical form associates to each monic, irreducible polynomial $\phi$ over
$\mathbb{F}_q$ a partition $\lambda_{\phi}$ such that
\[ \sum_{\phi} d(\phi) |\lambda_{\phi}| = 2n .\] This data arises from an element of $\fs$ if and
only if:

\begin{enumerate}
\item The partition $\lambda_x$ has all odd parts occur with even multiplicity
\item For every $\phi$, the partition $\lambda_{\phi}$ is equal to the partition $\lambda_{\overline{\phi}}$, where
$\overline{\phi}$ is defined as $(-1)^{d(\phi)} \phi(-x)$.
\end{enumerate}

Thus, we have proved:

\begin{theorem} \label{countcentSp} If $\{ \lambda_{\phi} \}$ is the data corresponding to an element of $\fs$, then the
dimension of its centralizer in $\fs$ is equal to
\[ \sum_i (\lambda_{x,i}')^2/2 + o(\lambda_x)/2 + \sum_{\phi=\overline{\phi} \atop \phi \neq x} d(\phi) \sum_i (\lambda_{\phi,i}')^2/2
+ \sum_{\{ \phi,\overline{\phi} \} \atop \phi \neq \overline{\phi}} d(\phi) \sum_i (\lambda_{\phi,i}')^2 .\]
\end{theorem}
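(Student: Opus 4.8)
The plan is to decompose the natural $2n$-dimensional module $V$ into the primary components of $x$, grouped so that each resulting summand is a nonsingular $x$-invariant subspace of one of three shapes, and to use the fact (noted above) that over $k$ the $\phi$-primary and $\psi$-primary components of $x$ are mutually orthogonal unless $\psi=\overline{\phi}$. Since any operator commuting with $x$ preserves each primary component, $C_{\fs}(x)$ is the external direct sum of the centralizers of $x$ on these pieces inside the corresponding smaller symplectic Lie algebras, so its dimension is additive and it suffices to treat each shape. The shapes are: (i) the nilpotent part, on $\ker x^{2n}$, attached to $\phi=x$; (ii) for each self-conjugate $\phi\neq x$ (which forces $d(\phi)$ even, since otherwise $\phi(X)=-\phi(-X)$ would give $\phi(0)=0$, i.e. $\phi=x$), the $\phi$-primary component $W_\phi$, which is nonsingular; and (iii) for each unordered pair $\{\phi,\overline{\phi}\}$ with $\phi\neq\overline{\phi}$, the nonsingular subspace $V_\phi\oplus V_{\overline{\phi}}$ on which $V_\phi$ and $V_{\overline{\phi}}$ are each totally singular and perfectly paired by the form.

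Shape (i) is exactly Lemma \ref{nilpdimsp}: the centralizer of $x|_{\ker x^{2n}}$ in $\fs$ has dimension $\sum_i(\lambda_{x,i}')^2/2+o(\lambda_x)/2$. For shape (iii), the alternating form gives an $\F_q[x]$-module identification $V_{\overline{\phi}}\cong V_\phi^{*}$ under which $x|_{V_{\overline{\phi}}}$ corresponds to $-(x|_{V_\phi})^{*}$; hence an element of $\fs$ commuting with $x$ is precisely the datum of an $\F_q[x]$-endomorphism $T$ of $V_\phi$ together with $-T^{*}$ on $V_{\overline{\phi}}$, and restriction to $V_\phi$ is a linear isomorphism from $C_{\fs}(x|_{V_\phi\oplus V_{\overline{\phi}}})$ onto $\operatorname{End}_{\F_q[x]}(V_\phi)$. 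By the computation in the proof of Lemma \ref{comwith}, $\dim_{\F_q}\operatorname{End}_{\F_q[x]}(V_\phi)=d(\phi)\sum_i(\lambda_{\phi,i}')^2$, and summing over the pairs $\{\phi,\overline{\phi}\}$ produces the last term of the claimed formula. For shape (ii), I would pass to the algebraic closure $k$: the roots of $\phi$ then fall into $d(\phi)/2$ pairs $\{\rho,-\rho\}$, and grouping them lets us write $W_\phi\otimes k=W_+\oplus W_-$ with $W_\pm$ totally singular, $x$-invariant, perfectly paired, and of Jordan type $\lambda_\phi$; the shape-(iii) argument then gives $\dim_k C_{\fs\otimes k}(x|_{W_\phi})=\dim_k\operatorname{End}_{k[x]}(W_+)=(d(\phi)/2)\sum_i(\lambda_{\phi,i}')^2$. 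Since $\dim C_{\fs}(x)$ is the dimension of the kernel of the $\F_q$-linear map $\operatorname{ad}_x$ and so is unchanged by base change (as already used in this section), this is also the shape-(ii) contribution over $\F_q$. Adding the three contributions gives the theorem.

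The step I expect to require the most care is the shape-(iii) identification. One must check injectivity of restriction to $V_\phi$ — an element of $\fs$ is pinned down on $V_{\overline{\phi}}$ by self-adjointness once its restriction to $V_\phi$ is known — and surjectivity, namely that a given $T\in\operatorname{End}_{\F_q[x]}(V_\phi)$ really does extend to an element of $\fs$: the operator equal to $T$ on $V_\phi$, $-T^{*}$ on $V_{\overline{\phi}}$, and $0$ on the remaining primary pieces lies in $\fs$ and commutes with $x$ because $-T^{*}$ commutes with $-(x|_{V_\phi})^{*}=x|_{V_{\overline{\phi}}}$. A secondary point is to verify in shape (ii) that the Jordan data of $x$ on each $W_\pm$ is the same partition $\lambda_\phi$ that governs $W_\phi$ over $\F_q$, so that Lemma \ref{comwith} is applied with the right combinatorics; this follows from the rationality of the invariant-factor decomposition. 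Everything else — in particular the identity converting $\sum_{j<i}\lambda_i+\sum_i\lceil\lambda_i/2\rceil$ into $\sum_i(\lambda_i')^2/2+o(\lambda)/2$ — is routine or already recorded in Lemma \ref{nilpdimsp}.
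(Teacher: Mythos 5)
Your proposal is correct and follows essentially the same route as the paper: decompose into the primary components of $x$, use Lemma \ref{nilpdimsp} for the $\phi=x$ piece, and for each $\phi\neq x$ split the corresponding nonsingular piece (after base change in the self-conjugate case) into two paired totally singular $x$-invariant subspaces so that the centralizer in $\fs$ becomes $\operatorname{End}_{F[x]}$ of one of them, reducing to the $\GL$ count. One small slip in phrasing: $W_+$ is not of Jordan type $\lambda_\phi$ but rather carries $d(\phi)/2$ distinct eigenvalues each with nilpotent part of type $\lambda_\phi$, though this is exactly what yields your stated dimension $(d(\phi)/2)\sum_i(\lambda_{\phi,i}')^2$ so the computation is unaffected.
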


Finally, we need to count how many elements there are in $\fs$ with a given canonical form.   This amounts to computing
the centralizers in $Sp$.   As in the discussion for centralizers in $\fs$, we can reduce to the three cases as above.  It is a
bit more involved as we now need the order of the centralizer rather than just the dimension.

For nilpotent elements, the centralizer is not usually connected in the algebraic group and so the orbits
break up over the finite field.  However, since the centralizer dimension in the Lie algebra depends only
the orbit over $k$, we only need to know the following.

\begin{lemma} \label{nilcen} The number of nilpotent elements in $\fs$  corresponding to a partition
$\lambda_1 \ge \lambda_2 \ge \ldots \ge \lambda_r$ is
\[ \frac{|\Sp(2n,q)|}{q^{\sum_i (\lambda_i')^2/2 + o(\lambda)/2} \prod_i (1-1/q^2)
\cdots (1-q^{2 \lfloor m_i(\lambda)/2 \rfloor})}  \]
\end{lemma}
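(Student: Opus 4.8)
The plan is to count nilpotent elements in $\fs$ of a fixed Jordan type $\lambda$ by the orbit-counting identity: the number of such elements equals $|\Sp(2n,q)|$ divided by the order of the centralizer $C_{\Sp(2n,q)}(x)$ of a representative $x$. Thus the whole computation reduces to writing down $|C_{\Sp(2n,q)}(x)|$ for a nilpotent $x$ of type $\lambda$. This is a classical computation (see e.g. the references on nilpotent centralizers in classical groups), but I would reproduce it along the lines already used in the excerpt: the centralizer $C_{\Sp(2n,k)}(x)$ over the algebraic closure has a Levi decomposition with unipotent radical $U$ and reductive quotient a product over the distinct part sizes $i$ occurring in $\lambda$ of a symplectic group $\Sp(m_i(\lambda),k)$ when $i$ is even and an orthogonal group $\mathrm{O}(m_i(\lambda),k)$ when $i$ is odd (here $m_i(\lambda)$ is even whenever $i$ is odd, by the constraint that odd parts occur with even multiplicity).

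The key steps, in order, are: (1) Identify the dimension of $C_{\Sp(2n,k)}(x)$; by Lemma \ref{nilpdimsp}(2) this is $\sum_i(\lambda_i')^2/2 + o(\lambda)/2$. (2) Apply a Lang-Steinberg argument (as in the proof of Lemma \ref{lem:nilpotent}): the unipotent radical $U$ is $F$-stable and connected, so $|U^F| = q^{\dim U}$, and $|C_{\Sp(2n,q)}(x)| = q^{\dim U}\cdot|R^F|$ where $R$ is the reductive quotient. (3) Evaluate $|R^F| = \prod_i |\Sp(m_i,q)^{\epsilon_i}|$ with the appropriate twist $\epsilon_i$ coming from how $F$ acts on the orthogonal factors (the relevant fact being that the generating-function manipulation only needs the polynomial in $q$ that results, which works out to $\prod_i (1-1/q^2)\cdots(1-q^{2\lfloor m_i(\lambda)/2\rfloor})$ times the appropriate power of $q$). (4) Combine: $|C_{\Sp(2n,q)}(x)| = q^{\sum_i(\lambda_i')^2/2 + o(\lambda)/2}\prod_i (1-1/q^2)\cdots(1-q^{2\lfloor m_i(\lambda)/2\rfloor})$, so the number of elements of type $\lambda$ is $|\Sp(2n,q)|$ over this, which is the claimed formula.

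The main obstacle is step (3): pinning down the exact order of the reductive part of the centralizer over $\F_q$, including the Witt-type/discriminant issues that determine whether each orthogonal factor is of plus or minus type and how the two geometric orbits (when the centralizer is disconnected) split over the finite field. The saving grace, emphasized in the paragraph preceding the lemma in the excerpt, is that we do not need the orbit count for each finite orbit separately — only the total count of nilpotent elements of geometric type $\lambda$, i.e. $\sum |\Sp(2n,q)|/|C_{\Sp(2n,q)}(x_j)|$ over representatives $x_j$ of the $F$-orbits inside the geometric class. Summing $|C^\circ{}^F|/|C^F|$-weighted contributions collapses (by a standard Lang-Steinberg counting argument, since $H^1(F,C/C^\circ)$ indexes the $F$-orbits and each has size $|C^F|$) to $|\Sp(2n,q)|/|(C^\circ)^F|$ with $C^\circ$ the connected centralizer; and $|(C^\circ)^F| = q^{\dim U}\cdot\prod_i|\Sp(m_i,q)|\cdot\prod_{i\text{ odd}}|\mathrm{SO}(m_i,q)^{\pm}|$, whose product over the factors yields exactly $q^{\sum_i(\lambda_i')^2/2+o(\lambda)/2}\prod_i(1-1/q^2)\cdots(1-q^{2\lfloor m_i(\lambda)/2\rfloor})$ after the routine simplification of the classical-group orders. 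I would present steps (1)-(2) in detail, cite the literature for the structure of $C_{\Sp}(x)$ in step (3), and leave the order bookkeeping as a short explicit verification.
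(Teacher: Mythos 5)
The paper's own proof is one sentence: in good characteristic, the map sending a nilpotent $x$ to the unipotent element $1+x$ (or any Springer bijection) carries nilpotent elements of Jordan type $\lambda$ in $\fs$ bijectively onto unipotent elements of the same Jordan type in $\Sp(2n,q)$, and the latter count is taken directly from \cite{F}. You instead try to compute $|C_{\Sp(2n,q)}(x)|$ from scratch, which is a genuinely different and considerably longer route.

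Two problems with the route you take. First, a small one: the parity is reversed — for nilpotent $x\in\fs$ the reductive part of $C_{\Sp_{2n}}(x)$ is $\prod_{i\text{ odd}}\Sp(m_i)\times\prod_{i\text{ even}}\mathrm{O}(m_i)$, not the other way around. Second, and more seriously, the ``collapse'' step is incorrect. You assert that summing the sizes of the $G^F$-orbits inside the geometric orbit gives $|G^F|/|(C^\circ)^F|$. That identity holds when $C^\circ$ is connected unipotent or more generally when all $F$-twists of $C^\circ$ have the same number of rational points, but it fails precisely in the situation at hand, where $C^\circ$ contains connected reductive factors $\mathrm{SO}(m_i)$ that twist to forms of different orders. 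Concretely, for $\lambda=(2,2)$ in $\fs(4,q)$ the two $\Sp(4,q)$-orbits have centralizers with $|(C^\circ)^F|$ equal to $q^3(q-1)$ and $q^3(q+1)$, so $|G^F|/|(C^\circ)^F|$ gives two different wrong numbers, while the honest sum
\[
\frac{|G^F|}{2q^3(q-1)}+\frac{|G^F|}{2q^3(q+1)}=q^2(q^4-1)
\]
is what matches the lemma. So the Witt-type/discriminant bookkeeping you hope to wave off does not wash out formally; it washes out only after the nontrivial identity $\tfrac{1}{2(q^k-1)}+\tfrac{1}{2(q^k+1)}=\tfrac{q^k}{q^{2k}-1}$ is invoked for each even part size. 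If you want to avoid that bookkeeping entirely, do what the paper does: transfer to unipotent classes and quote the existing count.
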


\begin{proof}  Since we are in good characteristic, the number of nilpotent elements with a given
Jordan form is the same as the number of unipotent elements in $\Sp(2n,q)$ with the same distribution
of Jordan blocks. This number is determined in \cite{F}.
\end{proof}

For the elements in $\fs$ that do not have $0$ as an eigenvalue, we have seen that the centralizers in the
algebraic group are connected, whence the $\Sp(2n,k)$-orbits intersect $\fs(2n,q)$ are just the $\Sp(2n,q)$
orbits.  So we just need to compute the centralizers in $\Sp(2n,q)$.

The next result gives the number of elements in $\fs(2n,q)$ with a given Jordan canonical
form.

\begin{theorem} \label{countJordSp} If $\{ \lambda_{\phi} \}$ is the data corresponding to an element $x$ of $\fs$, then the number
of elements of $\fs$ corresponding to this data is equal to $|\Sp(2n,q)|$ multiplied by:
\begin{eqnarray*}
& & \frac{1}{q^{\sum_i (\lambda_{x,i}')^2/2 + o(\lambda_x)/2} \prod_i (1-1/q^2) \cdots (1-1/q^{2 \lfloor m_i(\lambda_x)/2 \rfloor})} \\
& & \cdot \prod_{\phi = \overline{\phi} \atop \phi \neq x} \frac{1}{q^{d(\phi) \sum_i (\lambda_{\phi,i}')^2/2}
\prod_i (1+1/q^{d(\phi)/2}) \cdots (1 - (-1)^{m_i(\lambda_{\phi})}/q^{m_i(\lambda_{\phi}) d(\phi)/2}) } \\
& & \cdot \prod_{\{ \phi,\overline{\phi} \} \atop \phi \neq \overline{\phi}} \frac{1}{q^{d(\phi) \sum_i (\lambda_{\phi,i}')^2} \prod_i
(1-1/q^{d(\phi)}) \cdots (1-1/q^{m_i(\lambda_{\phi}) d(\phi)})}
\end{eqnarray*}
\end{theorem}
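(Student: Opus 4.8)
The plan is to run the orbit--stabilizer correspondence for the action of $\Sp(2n,q)$ on $\fs$, organized along the primary decomposition of the natural module $V$ into the three types of pieces already isolated in the discussion preceding Theorem \ref{countcentSp}. Fix data $\{\lambda_{\phi}\}$ of an element $x\in\fs$. Over $\mathbb{F}_q$ the module $V$ decomposes canonically and orthogonally as $V = V_0 \perp \big(\perp_{\phi=\overline{\phi},\,\phi\neq x} V_{\phi}\big)\perp\big(\perp_{\{\phi,\overline{\phi}\},\,\phi\neq\overline{\phi}}(V_{\phi}\oplus V_{\overline{\phi}})\big)$, where $V_0$ is the generalized $0$-eigenspace (on which $x$ is nilpotent of type $\lambda_x$), each $V_{\phi}$ with $\phi=\overline{\phi}\neq x$ is nondegenerate with $x$ acting with characteristic polynomial a power of $\phi$, and each $V_{\phi}$ with $\phi\neq\overline{\phi}$ is totally singular with $V_{\phi}\oplus V_{\overline{\phi}}$ nondegenerate; the existence and $\mathbb{F}_q$-rationality of this decomposition is exactly what the lemmas earlier in this section supply. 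Any $g\in\Sp(2n,q)$ centralizing $x$ preserves generalized eigenspaces over $k$ and the alternating form, hence preserves each summand, so $C_{\Sp(2n,q)}(x)$ is the direct product of the centralizers of the restrictions of $x$ to the summands. Since the $\Sp(2n,k)$-orbit of $x$ is determined by its Jordan type but can split into several $\Sp(2n,q)$-orbits only when $C_{\Sp(2n,k)}(x)$ is disconnected, and disconnectedness is confined to the nilpotent block $V_0$ (the centralizers attached to the other two block types will be connected, being $\GU$- and $\GL$-centralizers of arbitrary elements), the number $N$ of $x\in\fs$ with the given data factors as $|\Sp(2n,q)|$ times a product of one local factor per block, and it remains to identify each.

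For the block $V_0$, where $x$ is nilpotent of type $\lambda_x$, I would simply invoke Lemma \ref{nilcen}: the number of nilpotent elements of $\fs(2|\lambda_x|,q)$ of type $\lambda_x$ is $|\Sp(2|\lambda_x|,q)|$ divided by $q^{\sum_i(\lambda_{x,i}')^2/2+o(\lambda_x)/2}\prod_i(1-1/q^2)\cdots(1-1/q^{2\lfloor m_i(\lambda_x)/2\rfloor})$, which is the first displayed factor, and whose $q$-exponent is the centralizer dimension from Lemma \ref{nilpdimsp}; this is precisely the point at which the disconnectedness of $C_{\Sp}(x)$ and the splitting of the orbit are absorbed, via the unipotent enumeration of \cite{F} in good characteristic. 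For a block $V_{\phi}$ with $\phi=\overline{\phi}\neq x$, recall $d(\phi)$ is even, say $d(\phi)=2m$, and $t\mapsto -t$ generates $\mathrm{Gal}(\mathbb{F}_{q^{2m}}/\mathbb{F}_{q^m})$ on the residue field $\mathbb{F}_q[t]/(\phi)$; the alternating form then induces a nondegenerate Hermitian form over $\mathbb{F}_{q^{2m}}/\mathbb{F}_{q^m}$, the centralizer of $x$ in $\Sp(V_{\phi})$ is the centralizer of the corresponding element in the unitary group over $\mathbb{F}_{q^m}$, with reductive part $\prod_i\GU(m_i(\lambda_{\phi}),q^m)$ and unipotent radical of the expected dimension. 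Reading off its order exactly as in Lemmas \ref{comwithu} and \ref{jordanu} (with $q$ replaced by $q^m$, so that the relevant product is $\prod_j(1-(-1)^j/q^{jm})$) yields the second displayed factor, with $q$-exponent $d(\phi)\sum_i(\lambda_{\phi,i}')^2/2$ in agreement with Theorem \ref{countcentSp}. For a block $V_{\phi}\oplus V_{\overline{\phi}}$ with $\phi\neq\overline{\phi}$, both summands are totally singular and dually paired, so an element of $\Sp$ preserving both is determined by its restriction to $V_{\phi}$, whence $C_{\Sp(V_{\phi}\oplus V_{\overline{\phi}})}(x)\cong C_{\GL(V_{\phi})}(x|_{V_{\phi}})$, whose order is given by Lemmas \ref{comwith} and \ref{jordan} over $\mathbb{F}_{q^{d(\phi)}}$, producing the third displayed factor. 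Multiplying the three local factors by the common $|\Sp(2n,q)|$ proves the theorem.

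I expect the main obstacle to be the honest treatment of the nilpotent block: there $C_{\Sp}(x)$ is genuinely disconnected, the $\Sp(2n,k)$-orbit breaks into several $\Sp(2n,q)$-orbits, and the centralizer dimension carries the half-integer-looking term $o(\lambda_x)/2$. The argument survives precisely because (i) one does not need to track the component group directly, since Lemma \ref{nilcen} already gives the total element count for this block, and (ii) no disconnectedness arises from the other two block types, so the factorization of $C_{\Sp(2n,q)}(x)$ into block centralizers is not disturbed and the passage from ``divide by the order of the centralizer'' to the direct enumeration is legitimate. A secondary, purely bookkeeping, difficulty is keeping the Hermitian-versus-linear structure straight on the non-nilpotent blocks so that the exponents of $q$ and the $(1\pm q^{-\bullet})$-products land in the exact form stated; this is routine once one has Lemmas \ref{jordan} and \ref{jordanu} and the classical identification of the unitary group as the $\Sp$-centralizer of a skew-Hermitian element.
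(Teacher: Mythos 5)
Your proposal is correct and follows essentially the same route as the paper: reduce along the primary decomposition into a nilpotent block (handled by Lemma \ref{nilcen}, which already accounts for the disconnected centralizer and orbit-splitting), the $\phi\neq\overline{\phi}$ blocks (centralizer is a $\GL$-centralizer via the dual pair of totally singular spaces, Lemma \ref{comwith}), and the $\phi=\overline{\phi}\neq x$ blocks (centralizer identified with a unitary-group centralizer). The only cosmetic difference is that you realize the unitary identification directly via the residue field $\mathbb{F}_q[t]/(\phi)$ and its order-two automorphism, while the paper reaches $\GU(m,q^d)$ by first passing to a quadratic extension and then descending; these are two standard phrasings of the same argument.
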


\begin{proof}
By the discussion before Lemma \ref{nilpdimsp} and Lemma \ref{nilcen}  it suffices to reduce to the case where  $\phi \ne x$
and $\phi$ is either irreducible (and $\phi(x)=\phi(-x)$)  or $\phi=\phi_1\phi_2$ where $\phi_1$ is  irreducible
and $\phi_2$ is the dual polynomial of $\phi_1$  (i.e. its roots are the negatives of the roots of $\phi_1$).

In the latter case, $V$ decomposes as a direct sum of two totally isotropic spaces $V_1 \oplus V_2$ where
the element has characteristic polynomial a power of $\phi_i$ on $V_i$ and the action on $V_2$ is the dual
of the action on $V_1$.   Thus, the centralizer of $x$  is isomorphic to the centralizer in $\GL(V_1)$ of $x$ restricted to $V_1$
and the result follows by Lemma \ref{comwith}.

So it remains to consider the case that $\phi$ is irreducible.   The unipotent radical of the centralizer has order
$q^{\dim Q}$ where $Q$ is the unipotent radical of the centralizer in the algebraic group (and so can be computed
as above).   So we only have to identify the reductive part.      Modulo the unipotent radical, we see that we can
reduce to the case where all parts of the partition have the same size $e$  (this can be seen in the algebraic group).
Let $m$ denote the this multiplicity  and let $2d$ be the degree of $\phi$.

If we pass to a quadratic extension of our finite field, we are in the previous case and so the reductive
part of the centralizer is $\GL(m, q^{2d})$.   Arguing as we did in the unitary case, we see that the reductive
part of the centralizer is $\GU(m,q^d)$.
\end{proof}

Recall that $\overline{\phi}$ is defined as $(-1)^{d(\phi)} \phi(-x)$.
Note that if a monic irreducible polynomial $\phi \neq x$ satisfies $\overline{\phi}=\phi$, then $\phi$ has even degree.
We let $\overline{N}(2d,q)$ denote the number of monic irreducible polynomials $\phi$ of degree $2d$ such that
$\overline{\phi}=\phi$. It is also helpful to let $\overline{M}(d,q)$ denote the number of unordered pairs
$\{ \phi, \overline{\phi} \}$ of monic, irreducible, degree $d$ polynomials such that $\phi \neq \overline{\phi}$.

The following enumerative lemma will be helpful. It is a symplectic analog of Lemma \ref{poly}.

\begin{lemma} \label{countpolySp}
\begin{enumerate}

\item \[ \prod_{d \geq 1} (1-u^d)^{- \overline{N}(2d,q)} \prod_{d \geq 1} (1-u^d)^{- \overline{M}(d,q)} = \frac{1-u}{1-qu} .\]

\item \[ \prod_{d \geq 1} (1+u^d)^{- \overline{N}(2d,q)} \prod_{d \geq 1} (1-u^d)^{- \overline{M}(d,q)} = 1 .\]

\end{enumerate}
\end{lemma}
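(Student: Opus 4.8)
The plan is to count, by degree, the monic polynomials over $\mathbb{F}_q$ and regroup them according to how they behave under the involution $\phi \mapsto \overline{\phi}$, exactly as in the proof of Lemma \ref{poly}. Every monic irreducible polynomial $\phi \neq x$ is either fixed by the involution (in which case its degree is even, say $2d$, and it is counted by $\overline{N}(2d,q)$) or it is swapped with a distinct partner $\overline{\phi}$ of the same degree $d$ (such unordered pairs being counted by $\overline{M}(d,q)$). The polynomial $x$ itself is a special case: it is irreducible of degree $1$, it satisfies $\overline{x}=x$, but it has odd degree, so it is not counted by any $\overline{N}$ or $\overline{M}$; I will have to track it separately, and this is where the factor $(1-u)$ (as opposed to just a clean product formula) comes from.

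For part (1): consider the generating function $\prod_{\phi \neq x}(1 - u^{d(\phi)})$ over all monic irreducibles other than $x$. Splitting the product according to the two orbit types of the involution, a fixed $\phi$ of degree $2d$ contributes a factor $(1-u^{2d})$, and a pair $\{\phi,\overline{\phi}\}$ each of degree $d$ contributes $(1-u^d)^2$. Thus
\[
\prod_{\phi \neq x}(1-u^{d(\phi)}) = \prod_{d \geq 1}(1-u^{2d})^{\overline{N}(2d,q)} \prod_{d \geq 1}(1-u^d)^{2\overline{M}(d,q)}.
\]
On the other hand, by Lemma \ref{poly}, $\prod_{\phi}(1-u^{d(\phi)}) = 1-qu$, so dividing out the $\phi=x$ factor $(1-u)$ gives $\prod_{\phi\neq x}(1-u^{d(\phi)}) = (1-qu)/(1-u)$. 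Now I factor $(1-u^{2d}) = (1-u^d)(1+u^d)$ in the first product, which yields
\[
(1-qu)/(1-u) = \prod_{d\geq 1}(1-u^d)^{\overline{N}(2d,q)}\prod_{d\geq 1}(1+u^d)^{\overline{N}(2d,q)}\prod_{d\geq 1}(1-u^d)^{2\overline{M}(d,q)}.
\]
Part (1) asks for $\prod_d (1-u^d)^{-\overline{N}(2d,q)}\prod_d(1-u^d)^{-\overline{M}(d,q)} = (1-u)/(1-qu)$, which is the reciprocal of
$\prod_d(1-u^d)^{\overline{N}(2d,q)}\prod_d(1-u^d)^{\overline{M}(d,q)}$. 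So I need a second identity relating this to the displayed one; that is exactly what part (2) supplies.

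For part (2): I claim $\prod_{d\geq 1}(1+u^d)^{\overline{N}(2d,q)}\prod_{d\geq 1}(1-u^d)^{\overline{M}(d,q)} = 1$, equivalently $\prod_d(1+u^d)^{\overline{N}(2d,q)} = \prod_d(1-u^d)^{-\overline{M}(d,q)}$. I would prove this by a bijective/counting argument on polynomials with roots closed under negation. Consider monic polynomials $f$ with $f = \overline{f}$ (i.e. whose multiset of roots is invariant under $\alpha \mapsto -\alpha$); their irreducible factors must be either self-paired irreducibles of even degree $2d$ (degree-$2d$ building blocks, counted by $\overline{N}(2d,q)$) or products $\phi\overline{\phi}$ of a swapped pair (degree-$2d$ building blocks from pairs of degree-$d$ polynomials, counted by $\overline{M}(d,q)$)—together with possibly a power of $x$, which I again handle separately. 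Interpreting $\prod_d(1-u^{2d})^{-\overline{N}(2d,q)}(1-u^{2d})^{-\overline{M}(d,q)}$ as the generating function (in $u^2$, i.e. by half-degree) for such "negation-symmetric" polynomials coprime to $x$, I can count these directly: a negation-symmetric polynomial coprime to $x$ of degree $2m$ is determined by its restriction to a set of representatives, and the count works out so that this generating function equals $1/(1-qu^2)\cdot(\text{correction for }x)$, matching the square of the part-(1) expression. Comparing with the factorization from part (1) then isolates part (2). The cleanest route is probably: first establish (2) by this direct polynomial count, then substitute into the displayed factorization above to deduce (1).

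\textbf{Main obstacle.} The genuinely fiddly point is the bookkeeping around the polynomial $x$ itself: it is the unique irreducible fixed by the involution that has \emph{odd} degree, so it is excluded from both $\overline{N}$ and $\overline{M}$, and it is responsible for the asymmetry between the answer $(1-u)/(1-qu)$ in part (1) and the clean $1$ in part (2). I would need to be careful to consistently restrict to polynomials coprime to $x$ (equivalently, with nonzero constant term) throughout the counting argument for (2), and then account for the $(1-u)$ factor exactly once when passing from "all monic polynomials" to "monic polynomials coprime to $x$." Everything else is the same unique-factorization bookkeeping used in Lemma \ref{poly}, combined with the elementary identity $1-u^{2d} = (1-u^d)(1+u^d)$.
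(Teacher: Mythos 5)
Your overall strategy is sound and, once untangled, coincides with the paper's. You derive the auxiliary identity
\[
\prod_{d \geq 1}(1-u^d)^{-\overline{N}(2d,q)}\prod_{d \geq 1}(1+u^d)^{-\overline{N}(2d,q)}\prod_{d \geq 1}(1-u^d)^{-2\overline{M}(d,q)} = \frac{1-u}{1-qu}
\]
by dividing Lemma \ref{poly} by the $\phi=x$ factor and factoring $1-u^{2d}=(1-u^d)(1+u^d)$; the paper obtains the equivalent identity by directly counting all monic polynomials, so your derivation here is a small variant but entirely valid. This identity says precisely that (part 1) times (part 2) equals $\frac{1-u}{1-qu}$, so it remains to establish one of the two parts directly.

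The confusion is in your sketch of the ``direct count for part (2).'' The generating function you write, $\prod_{d}(1-u^{2d})^{-\overline{N}(2d,q)}\prod_{d}(1-u^{2d})^{-\overline{M}(d,q)}$, counts (by full degree) monic negation-symmetric polynomials coprime to $x$. A degree-$2n$ monic $f$ with $f(x)=f(-x)$ has the form $g(x^2)$ for a unique monic $g$ of degree $n$, so there are $q^n$ of them, and after removing the factor $1/(1-u^2)$ for powers of $x^2$ the count gives $\frac{1-u^2}{1-qu^2}$. That is \emph{not} ``the square of the part-(1) expression''; it is the part-(1) identity with $u$ replaced by $u^2$. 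In other words, your direct count proves part (1), not part (2). After relabeling, the argument is exactly the paper's: establish part (1) by this polynomial count, then divide the auxiliary identity by part (1) to obtain part (2). A genuinely independent bijective proof of part (2), as you propose, would require showing that the number of squarefree products of self-paired irreducibles equals the number of arbitrary products of the $\phi\overline{\phi}$'s at each half-degree, which does not seem to admit an obvious direct bijection; the detour through part (1) is the natural route.
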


\begin{proof} To prove the first assertion, note that any monic polynomial (not necessarily irreducible) which is invariant
under the involution $f(x) \mapsto (-1)^{d(f)} f(-x)$ factors uniquely into irreducibles as a product of powers of $\phi$ where
$\overline{\phi}=\phi$ and of powers of $\phi \overline{\phi}$ where $\phi \neq \overline{\phi}$. Hence the coefficient
of $u^n$ in \[ (1-u)^{-1} \prod_{d \geq 1} (1-u^d)^{- \overline{N}(2d,q)} \prod_{d \geq 1} (1-u^d)^{- \overline{M}(d,q)} \] is equal
to the number of monic degree $2n$ polynomials $f$ such that $f(x)=f(-x)$. This is equal to $q^n$, which is the
coefficient of $u^n$ in $\frac{1}{1-qu}$, proving the first part of the theorem.

To prove the second part of the theorem, we claim that
\[ (1-u)^{-1} \prod_{d \geq 1} (1-u^{2d})^{- \overline{N}(2d,q)} \prod_{d \geq 1} (1-u^d)^{- 2 \overline{M}(d,q)} = \frac{1}{1-qu} .\]
Indeed, the claimed equation follows since the coefficient of $u^n$ in the left hand side counts the total number of monic degree $n$
polynomials, as does the coefficient of $u^n$ in $1/(1-qu)$. Dividing the claimed equation by the first assertion of the theorem
immediately proves the second assertion of the theorem.
\end{proof}

Let $S_0=1$, and for $n \geq 1$, let $S_n$ be the number of ordered pairs of commuting elements of the Lie
algebra of $\Sp(2n,q)$. Armed with the above results, we can now derive a generating function for the sequence $S_n$.

\begin{theorem} \label{countpairsSp}
\[ \sum_{n \geq 0} \frac{S_n u^n}{|\Sp(2n,q)|} = \frac{\prod_{i \geq 1} (1+u^i)}
{\prod_{i \geq 1} \prod_{l \geq 0} (1- u^i/q^{2l-1})} .\]
\end{theorem}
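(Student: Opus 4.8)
The plan is to mimic exactly the proof of the Feit-Fine theorem in Section~\ref{GL}, replacing Lemmas~\ref{jordan} and~\ref{comwith} by Theorems~\ref{countJordSp} and~\ref{countcentSp}, and replacing Lemma~\ref{poly} by Lemma~\ref{countpolySp}. First I would write $S_n = |\Sp(2n,q)| [u^n] \Phi(u)$, where $\Phi(u)$ is the product over all Jordan-type data $\{\lambda_\phi\}$ of the quantity (number of elements with that data) $\times$ (size of centralizer), divided by $|\Sp(2n,q)|$. Dividing the centralizer dimension from Theorem~\ref{countcentSp} by the corresponding factor in Theorem~\ref{countJordSp}, the powers of $q$ coming from $q^{\sum(\lambda'_{\phi,i})^2/\ast}$ cancel in each of the three families of polynomials ($\phi=x$; $\phi=\overline\phi\neq x$; $\{\phi,\overline\phi\}$ with $\phi\neq\overline\phi$), leaving only the ``$(1/q)_m$''-type denominators. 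So $\Phi(u)$ factors as a product of three sub-products, one over each family of $\phi$'s.

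Next I would evaluate each of the three sub-products using Lemma~\ref{euler} (Euler's identity), as is done repeatedly in the $\GL$ and $\GU$ sections. For the $\phi\neq\overline\phi$ pairs the inner sum over partitions is $\prod_{i\geq1}\sum_{m\geq0} u^{imd}/(1/q^d)_m = \prod_{i\geq1}\prod_{l\geq0}(1-u^{id}/q^{ld})^{-1}$, exactly as in the Feit-Fine proof with $d=d(\phi)$. For the $\phi=\overline\phi\neq x$ case the relevant denominator is $(1+1/q^{d/2})\cdots(1-(-1)^m/q^{md/2})$, which is the ``$(1/Q)_m$ with $Q\mapsto -Q$'' expression for $Q=q^{d/2}$; applying Lemma~\ref{euler} with $q\mapsto -q^{d/2}$ and $u\mapsto u^i$ gives $\prod_{i\geq1}\prod_{l\geq0}(1+(-1)^l u^i/q^{ld/2})^{-1}$ (note $d$ is even so $d/2$ is an integer). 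The hardest bookkeeping is the $\phi=x$ term: its denominator $(1-1/q^2)\cdots(1-1/q^{2\lfloor m/2\rfloor})$ is $(1/q^2)_{\lfloor m/2\rfloor}$, and I need to recognize the generating function $\sum_{m\geq0} u^m/(1/q^2)_{\lfloor m/2\rfloor} = (1+u)\sum_{r\geq0} u^{2r}/(1/q^2)_r = (1+u)\prod_{l\geq0}(1-u^2/q^{2l})^{-1}$ by Lemma~\ref{euler}; then raising to the product over $i\geq1$ of $u\mapsto u^i$ yields $\prod_{i\geq1}(1+u^i)\cdot\prod_{i\geq1}\prod_{l\geq0}(1-u^{2i}/q^{2l})^{-1}$. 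This single $\phi=x$ contributes the numerator $\prod_{i\geq1}(1+u^i)$ that appears in the theorem.

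Then I would switch the order of the products (moving the $\prod_\phi$ inside the $\prod_i\prod_l$) and collect: the $\phi=\overline\phi\neq x$ contribution becomes $\prod_{i,l}\prod_{d}(1+(-1)^l u^i/q^{ld/2})^{-\overline N(2d,q)}$ and the pair contribution becomes $\prod_{i,l}\prod_d(1-u^{id}/q^{ld})^{-\overline M(d,q)}$. After substituting $w=u^i/q^l$ (so the exponent $q^{-ld/2}$ on a degree-$2d$ polynomial becomes $w^d$, and $q^{-ld}$ on a degree-$d$ polynomial becomes $w^d$ as well), these two families combine into exactly the left-hand sides of Lemma~\ref{countpolySp}: for $l$ even I get case (1), $\tfrac{1-w}{1-qw}$ with $w=u^i/q^l$, and for $l$ odd I get case (2), which equals $1$ — except that the semisimple-squared-part bookkeeping requires care with how $l=0$ vs.\ $l\geq1$ and even-vs-odd $l$ interact with the $(-1)^l$ sign. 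Combining the $l$-even factor $\prod_{i\geq1}\prod_{l\ \mathrm{even}}\tfrac{1-u^i/q^l}{1-qu^i/q^l}$ with the $\phi=x$ piece $\prod_{i,l\geq0}(1-u^{2i}/q^{2l})^{-1}$ (whose factors are indexed by pairs $(i,l)$ and can be reindexed so each cancels one of the numerators $1-u^{i'}/q^{l'}$ with $i'$ even) should leave precisely $\prod_{i\geq1}\prod_{l\geq0}(1-u^i/q^{2l-1})^{-1}$, and multiplying by the surviving $\prod_{i\geq1}(1+u^i)$ gives the claimed formula.

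The main obstacle I anticipate is purely organizational rather than conceptual: keeping the three indexing conventions straight while switching orders of products and making the cancellation between the $\phi=x$ ``squared'' factor $\prod(1-u^{2i}/q^{2l})^{-1}$ and the numerator $1-w$ from Lemma~\ref{countpolySp}(1) come out to exactly $\prod_{i,l}(1-u^i/q^{2l-1})^{-1}$ with no leftover or missing factors. A clean way to manage this is to substitute $w=u^i/q^l$ once and for all, handle the $l$ even and $l$ odd cases of Lemma~\ref{countpolySp} separately, and only at the very end reintroduce the explicit $i,l$ ranges and verify term-by-term (by comparing with the already-proven Feit-Fine product shape) that the denominator collapses to $\prod_{i\geq1}\prod_{l\geq0}(1-u^i/q^{2l-1})$.
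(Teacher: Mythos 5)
Your overall strategy matches the paper's exactly: use Theorems~\ref{countcentSp} and~\ref{countJordSp} to write $S_n$ as $|\Sp(2n,q)|$ times a coefficient of a triple product indexed by the three families of $\phi$, apply Lemma~\ref{euler} inside each family, interchange products, and apply the two parts of Lemma~\ref{countpolySp} to the $l$ even and $l$ odd cases. The treatment of the families $\phi=\overline\phi\neq x$ and $\{\phi,\overline\phi\}$ is sound. However, there is a genuine gap in your $\phi=x$ piece.

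You wrote the $\phi=x$ generating function as $\prod_{i\geq 1} g(u^i)$ with $g(w)=\sum_{m\geq 0} w^m/(1/q^2)_{\lfloor m/2\rfloor}=(1+w)\prod_{l\geq 0}(1-w^2/q^{2l})^{-1}$, summing over \emph{all} multiplicities $m$ for every part size $i$. But the data $\lambda_x$ coming from an element of $\fs$ must have all odd parts occur with \emph{even} multiplicity. For odd $i$ the inner sum is therefore only over even $m$, which gives $\sum_{r\geq 0} u^{2ir}/(1/q^2)_r=\prod_{l\geq 0}(1-u^{2i}/q^{2l})^{-1}$ with \emph{no} $(1+u^i)$ factor. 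The paper encodes this by splitting $A=A1\cdot A2$: the $(1+u^i)$ factor appears only in $A2$, i.e., only for even $i$. Your version equals the correct $A$ multiplied by the spurious factor $\prod_{i\ \mathrm{odd}}(1+u^i)$, so the final product does not collapse to the claimed formula. A quick sanity check exposes the problem: since each partition datum contributes $u^{2n}$, the product $ABC$ must be an even power series in $u$, but $\prod_{i\ \mathrm{odd}}(1+u^i)$ introduces odd powers. Once you restrict to even multiplicities for odd $i$ (equivalently, compute $A1$ for odd $i$ and $A2$ for even $i$ separately as in the paper), the denominator of $A$ ranges over all $u^{2i}$ and cancels exactly against the numerator $\prod_{l\ \mathrm{even}}\prod_i(1-u^{2i}/q^l)$ produced by $BC$, leaving $\prod_{i\geq1}(1+u^{2i})/\prod_{i\geq1}\prod_{l\geq0}(1-u^{2i}/q^{2l-1})$ and hence the theorem.
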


\begin{proof} It is immediate from Theorems \ref{countcentSp} and \ref{countJordSp} that $S_n$ is equal to
$|\Sp(2n,q)|$ multiplied by the coefficient of $u^{2n}$ in $ABC$ where
\[ A = \sum_{\lambda \atop i \ odd \implies m_i(\lambda) \ even} \frac{u^{|\lambda|}}{\prod_i (1-1/q^2) (1-1/q^4) \cdots (1-1/q^{2 \lfloor m_i(\lambda)/2 \rfloor})} \]
\[ B = \prod_{\phi = \overline{\phi} \atop \phi \neq x} \sum_{\lambda} \frac{u^{d(\phi) |\lambda|}}{\prod_i (1+1/q^{d(\phi)/2}) \cdots
(1-(-1)^{m_i(\lambda)}/q^{m_i(\lambda) d(\phi)/2})} \]
\[ C = \prod_{ \{\phi,\overline{\phi} \} \atop \phi \neq \overline{\phi}} \sum_{\lambda} \frac{u^{2 d(\phi) |\lambda|}} {{\prod_i (1-1/q^{d(\phi)}) \cdots (1-1/q^{m_i(\lambda) d(\phi)})}} .\] Note that in $A$, the sum is over all partitions such that the odd parts occur with even multiplicity, and that in
$B$ and $C$, the sum is over all partitions.

Note that $A=(A1)(A2)$ where

\[ A1 = \prod_{i \ odd} \sum_{m \ even} \frac{u^{im}}{(1-1/q^2) (1-1/q^4) \cdots (1-1/q^m)} \] and $A2$ is equal to
\[ \prod_{i \ even \atop i \geq 2} \left[ \sum_{m \ even} \frac{u^{im}}{(1-1/q^2) \cdots (1-1/q^{m})} +
\sum_{m \ odd} \frac{u^{im}}{(1-1/q^2) \cdots (1-1/q^{m-1})} \right] \]

Applying Lemma \ref{euler} gives that

\[ A1 = \prod_{i \ odd} \prod_{l \geq 0} \frac{1}{(1-u^{2i}/q^{2l})} \]
\[ A2 = \prod_{i \ even \atop i \geq 2} (1+u^i) \prod_{l \geq 0} \frac{1}{(1-u^{2i}/q^{2l})}. \]

Next observe that

\begin{eqnarray*}
B & = & \prod_{\phi = \overline{\phi} \atop \phi \neq x} \prod_{i \geq 1} \sum_{m \geq 0} \frac{u^{im d(\phi)}}{(1+1/q^{d(\phi)/2}) \cdots
(1-(-1)^m/q^{m d(\phi)/2})} \\
& = & \prod_{\phi = \overline{\phi} \atop \phi \neq x} \prod_{i \geq 1} \prod_{l \geq 0} \frac{1}{(1-(-1)^l u^{i d(\phi)}/q^{l d(\phi)/2})}
\end{eqnarray*} where the second equality used Lemma \ref{euler}.

Similarly,

\begin{eqnarray*}
C & = & \prod_{ \{\phi,\overline{\phi} \} \atop \phi \neq \overline{\phi}} \prod_{i \geq 1} \sum_{m \geq 0}
\frac{u^{2im d(\phi)}}{(1-1/q^{d(\phi)}) \cdots (1-1/q^{m d(\phi)})} \\
& = & \prod_{ \{\phi,\overline{\phi} \} \atop \phi \neq \overline{\phi}} \prod_{i \geq 1} \prod_{l \geq 0}
\frac{1}{(1-u^{2i d(\phi)}/q^{l d(\phi)})}.
\end{eqnarray*}

Switching the order of the products in $B$ and $C$, and then applying both parts of Lemma \ref{countpolySp}
gives that

\begin{eqnarray*}
BC & = & \prod_{l \geq 0} \prod_{i \geq 1} \left[ \prod_{d \geq 1} (1-(-1)^l u^{2id}/q^{ld})^{- \overline{N}(2d,q)}
\prod_{d \geq 1} (1-u^{2id}/q^{ld})^{- \overline{M}(d,q)} \right] \\
& = & \prod_{l \ even \atop l \geq 0} \prod_{i \geq 1} \frac{(1-u^{2i}/q^l)}{(1-u^{2i}/q^{l-1})}.
\end{eqnarray*}

Hence \[ ABC = (A1)(A2)(BC) = \frac{\prod_{i \geq 1} (1+u^{2i})}
{\prod_{i \geq 1} \prod_{l \geq 0} (1-u^{2i}/q^{2l-1})}, \] which proves the theorem.
\end{proof}

Next we use Theorem \ref{countpairsSp} to obtain asymptotic information about the number of commuting pairs
in the Lie algebra of $\Sp(2n,q)$.

\begin{theorem} For $q$ fixed,
\[
\lim_{n \rightarrow \infty} \frac{S_n}{q^{2n^2+2n}}  = \prod_{i \geq 1}
(1+1/q^i) (1-1/q^i)^{- \lfloor (i+1)/2 \rfloor} .\]
\end{theorem}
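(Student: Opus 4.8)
The plan is to follow the template of the two preceding asymptotic theorems, extracting the leading term from the generating function of Theorem~\ref{countpairsSp} via Lemma~\ref{tay}. First I would use the standard formula $|\Sp(2n,q)| = q^{n^2}\prod_{i=1}^{n}(q^{2i}-1) = q^{2n^2+n}\prod_{i=1}^{n}(1-1/q^{2i})$, so that Theorem~\ref{countpairsSp} gives
\[ S_n = q^{2n^2+n}\Big(\prod_{i=1}^{n}(1-1/q^{2i})\Big)\,[u^n]\,\frac{\prod_{i\ge 1}(1+u^i)}{\prod_{i\ge 1}\prod_{l\ge 0}(1-u^i/q^{2l-1})} . \]
Dividing by $q^{2n^2+2n}$ and noting that the leftover factor $q^{-n}$ is exactly what the substitution $u\mapsto u/q$ produces (it scales $[u^n]$ by $q^{-n}$), this becomes
\[ \frac{S_n}{q^{2n^2+2n}} = \Big(\prod_{i=1}^{n}(1-1/q^{2i})\Big)\,[u^n]\,\frac{\prod_{i\ge 1}(1+u^i/q^i)}{\prod_{i\ge 1}\prod_{l\ge 0}(1-u^i/q^{i+2l-1})} . \]

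Next I would single out the $(i,l)=(1,0)$ denominator factor, which is exactly $1-u$, and call the rest of the expression $g(u)$. Every other denominator factor is $1-u^i/q^{i+2l-1}$ with $i+2l-1\ge 1$, and the only one with exponent $1$ is $1-u^2/q$ (from $(i,l)=(2,0)$); hence $g$ is analytic for $|u|<\sqrt q$, and since $q\ge 2$ its Taylor series about $0$ converges at $u=1$. Lemma~\ref{tay} then gives $\lim_{n\to\infty}[u^n]\,g(u)/(1-u)=g(1)$, while $\prod_{i=1}^{n}(1-1/q^{2i})\to\prod_{i\ge 1}(1-1/q^{2i})$, so
\[ \lim_{n\to\infty}\frac{S_n}{q^{2n^2+2n}} = \Big(\prod_{i\ge 1}(1-1/q^{2i})\Big)\cdot\frac{\prod_{i\ge 1}(1+1/q^i)}{\prod_{(i,l)}(1-1/q^{i+2l-1})} , \]
where the last product runs over all $(i,l)$ with $i\ge 1,\ l\ge 0$ other than $(1,0)$.

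It remains to match this against the asserted form. The factor $\prod_{i\ge 1}(1+1/q^i)$ already appears, so I only need the identity $\prod_{i\ge 1}(1-1/q^{2i})\big/\prod_{(i,l)}(1-1/q^{i+2l-1}) = \prod_{j\ge 1}(1-1/q^j)^{-\lfloor (j+1)/2\rfloor}$. I would prove this by comparing, for each fixed $j\ge 1$, the exponent with which $(1-1/q^j)$ occurs on the two sides. On the left the numerator contributes $1$ to this exponent when $j$ is even and $0$ when $j$ is odd, while the denominator contributes $-(\lfloor j/2\rfloor+1)$, this being the number of pairs $(i,l)$ with $i\ge 1,\ l\ge 0,\ i+2l-1=j$ (the excluded pair $(1,0)$ has $i+2l-1=0$ and so is irrelevant for $j\ge 1$). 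A one-line check on the parity of $j$ shows the total equals $-\lfloor (j+1)/2\rfloor$, which is the exponent on the right, completing the proof.

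I do not anticipate any real obstacle: the argument is entirely parallel to the $\GL$ and $\GU$ cases. The two points that want a little care are the analyticity of $g$ slightly past $u=1$, needed to invoke Lemma~\ref{tay} legitimately, and the small combinatorial exponent count in the last paragraph; neither is difficult.
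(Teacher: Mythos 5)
Your proposal is correct and follows exactly the paper's own route: substitute $u\mapsto u/q$ to pull out the $q^n$, peel off the $(i,l)=(1,0)$ factor $1/(1-u)$, apply Lemma~\ref{tay}, and then simplify the resulting infinite product. The paper compresses the final exponent count into the phrase ``elementary manipulations''; you simply spell out that bookkeeping, and your count (numerator contributes $1$ for $j$ even, denominator contributes $-(\lfloor j/2\rfloor+1)$, giving $-\lfloor (j+1)/2\rfloor$ in total) is right.
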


\begin{proof} We know from Theorem \ref{countpairsSp} that
\begin{eqnarray*}
S_n & = & |\Sp(2n,q)| [u^n] \frac{\prod_{i \geq 1} (1+u^i)}{\prod_{i \geq 1} \prod_{l \geq 0} (1-u^i/q^{2l-1})} \\
& = & |\Sp(2n,q)| q^n [u^n] \frac{\prod_{i \geq 1} (1+u^i/q^i)}{\prod_{i \geq 1} \prod_{l \geq 0} (1-u^i/q^{2l+i-1})} \\
& = & q^{2n^2+2n} (1-1/q^2) \cdots (1-1/q^{2n}) \\
& & \cdot [u^n] \frac{\prod_{i \geq 1} (1+u^i/q^i)}{\prod_{i \geq 1} \prod_{l \geq 0}
(1-u^i/q^{2l+i-1})}.
\end{eqnarray*}

Thus \[ \lim_{n \rightarrow \infty} \frac{S_n}{q^{2n^2+2n}} = \prod_{j \geq 1} (1-1/q^{2j}) \lim_{n \rightarrow \infty}
[u^n] \frac{\prod_{i \geq 1} (1+u^i/q^i)}{\prod_{i \geq 1} \prod_{l \geq 0}
(1-u^i/q^{2l+i-1})} .\]

Since the $i=1,l=0$ case of $1/(1-u^i/q^{2l+i-1})$ is equal to $1/(1-u)$, it follows from Lemma \ref{tay} and
elementary manipulations that \[ \prod_{j \geq 1} (1-1/q^{2j}) \lim_{n \rightarrow \infty}
[u^n] \frac{\prod_{i \geq 1} (1+u^i/q^i)}{\prod_{i \geq 1} \prod_{l \geq 0}
(1-u^i/q^{2l+i-1})} \] is equal to
\[ \prod_{i \geq 1}
(1+1/q^i) (1-1/q^i)^{- \lfloor (i+1)/2 \rfloor}    .\]
\end{proof}

{\it Remark:}
Let $\Com(G)$ denote the number of ordered pairs of commuting elements of a finite group $G$. As
mentioned earlier, $\Com(G)$ is equal to $k(G)|G|$, where $k(G)$ is the number of conjugacy classes of $G$. From
\cite{FG}, since $q$ is odd, one has that
\[ \lim_{n \rightarrow \infty} \frac{k(\Sp(2n,q))}{q^n} = \prod_{i \geq 1} \frac{(1+1/q^i)^4}{(1-1/q^i)}.\] Thus,
\begin{eqnarray*}
\lim_{n \rightarrow \infty} \frac{\Com(\Sp(2n,q))}{q^{2n^2+2n}} & = & \prod_{i \geq 1} \frac{(1+1/q^i)^4}{(1-1/q^i)}
\lim_{n \rightarrow \infty} \frac{|\Sp(2n,q)|}{q^{2n^2+n}} \\
& = & \frac{\prod_{i \geq 1} (1+1/q^i)^4}{\prod_{i \ odd}
(1-1/q^i)}.
\end{eqnarray*}

Our next goal is to count the number of nilpotent commuting pairs in $\fs$.

We first require a result of Richardson \cite[Lemma 6.6]{R1} (which can be proved quite easily in our special case).   Note that for
$G=\Sp(2n,k)$ all characteristics other than $2$ are very good.

\begin{lemma} \label{lem:richardson} \rm{(\cite{R1})}  Let $G$ be a simple algebraic group
over an algebraically closed field $k$.
Assume that the characteristic is very good for $G$.  Let $\frak{g}$ be the Lie algebra of $G$.   If $x \in \frak{g}$,
then $\{y \in \frak{g}|[x,y]=0\}$ is the Lie algebra of $C_G(x)$.
\end{lemma}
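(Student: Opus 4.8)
The plan is to recast the statement as the smoothness of the centralizer and then establish it by a dimension count, reducing first to the nilpotent case. One direction is formal and needs no hypothesis on the characteristic: since $C_G(x)$ is the stabilizer of $x$ for the adjoint action, the Lie algebra of the reduced centralizer satisfies $\operatorname{Lie}(C_G(x)) \subseteq \mathfrak{c}_{\mathfrak{g}}(x) := \{y \in \mathfrak{g} : [x,y]=0\}$; equivalently the orbit map $\mu_x\colon G \to \operatorname{Ad}(G)x$, $g \mapsto \operatorname{Ad}(g)x$, factors through $G/C_G(x)$ and $d(\mu_x)_e = -\operatorname{ad} x$ kills $\operatorname{Lie}(C_G(x))$. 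Over an algebraically closed field the reduced centralizer is smooth, so $\dim \operatorname{Lie}(C_G(x)) = \dim C_G(x) = \dim \mathfrak{g} - \dim \operatorname{Ad}(G)x$, while $\dim \mathfrak{c}_{\mathfrak{g}}(x) = \dim \mathfrak{g} - \operatorname{rank}(\operatorname{ad} x)$ and $\operatorname{rank}(\operatorname{ad} x) = \dim[\mathfrak{g},x] = \dim \operatorname{im}(d(\mu_x)_e) \le \dim T_x(\operatorname{Ad}(G)x) = \dim \operatorname{Ad}(G)x$. Hence the asserted equality is equivalent to $\operatorname{rank}(\operatorname{ad} x) = \dim \operatorname{Ad}(G)x$, i.e. to $[\mathfrak{g},x] = T_x(\operatorname{Ad}(G)x)$, i.e. to separability of $\mu_x$. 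So the whole content is: in very good characteristic, every adjoint orbit map is separable.

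Here the hypothesis enters, in two ways. First, in very good characteristic $\mathfrak{g}$ carries a nondegenerate $G$-invariant symmetric bilinear form $\kappa$ (this is precisely what ``very good'' buys); invariance makes $\operatorname{ad} x$ skew for $\kappa$, so $\mathfrak{c}_{\mathfrak{g}}(x) = [\mathfrak{g},x]^{\perp}$ and the two dimensions to be matched are genuinely complementary. Second, I would pass to the Jordan decomposition $x = s + n$ in $\mathfrak{g}$ (valid in good characteristic): then $C_G(x) = C_H(n)$ with $H := C_G(s)$, and since $s$ lies in the Lie algebra of a maximal torus a direct root-space computation gives $\operatorname{Lie}(H) = \mathfrak{c}_{\mathfrak{g}}(s)$, with $H^{\circ}$ connected reductive of good characteristic (and very good after passing to a simply connected cover, the relevant central isogeny being separable precisely because the characteristic is very good). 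This reduces matters to the smoothness of $C_G(e)$ for $e \in \mathfrak{g}$ nilpotent, $G$ connected reductive, good characteristic.

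For the nilpotent case I would use an associated cocharacter: by Jacobson--Morozov in good characteristic (Pommerening, Premet) there is a cocharacter $\tau\colon \mathbb{G}_m \to G$ with $\operatorname{Ad}(\tau(t))e = t^2 e$, giving a grading $\mathfrak{g} = \bigoplus_i \mathfrak{g}(i)$ with $e \in \mathfrak{g}(2)$ and $\kappa$ pairing $\mathfrak{g}(i)$ with $\mathfrak{g}(-i)$. Premet's analysis of nilpotent centralizers in good characteristic supplies exactly what is needed: $\operatorname{ad} e$ is injective on $\bigoplus_{i<0}\mathfrak{g}(i)$ and surjective onto $\bigoplus_{i>0}\mathfrak{g}(i)$, so $\dim \mathfrak{c}_{\mathfrak{g}}(e) = \sum_{i \ge 0}\dim \mathfrak{g}(i)$; meanwhile $C_G(e)$ is contained in the parabolic $P$ with $\operatorname{Lie} P = \bigoplus_{i \ge 0}\mathfrak{g}(i)$ and decomposes as $\bigl(C_G(e)\cap C_G(\tau)\bigr) \ltimes \bigl(C_G(e)\cap R_u(P)\bigr)$, whose first factor is a centralizer of a torus in a reductive group (hence smooth) and whose second factor is unipotent (hence smooth); a bookkeeping of the graded pieces then yields $\dim C_G(e) = \sum_{i \ge 0}\dim \mathfrak{g}(i) = \dim \mathfrak{c}_{\mathfrak{g}}(e)$, hence smoothness, hence the lemma.

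I expect this nilpotent step to be the real obstacle: the $\mathfrak{sl}_2$-type injectivity and surjectivity of $\operatorname{ad} e$ on the graded pieces is genuinely nontrivial in characteristic $p$ (the relevant $\tau$-weights need not be smaller than $p$), and the internal structure of $C_G(e)$ has to be pinned down with some care; everything before it is formal. For the only case used in this paper, $G = \Sp(2n,k)$ with $q$ odd, all of this can be bypassed: the centralizer dimensions $\dim \mathfrak{c}_{\fs}(x)$ and the orders, hence dimensions, of the corresponding centralizers in $\Sp(2n,q)$ are computed explicitly in the discussion preceding Lemma~\ref{nilpdimsp} and in Theorem~\ref{countcentSp}, and they agree, so one may simply quote those computations.
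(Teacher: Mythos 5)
The paper does not actually prove this statement: it is quoted from Richardson \cite[Lemma 6.6]{R1}, with the parenthetical remark that it ``can be proved quite easily in our special case.'' So the relevant comparison is with Richardson's argument, and your proposal takes a genuinely different and much heavier route. Richardson's method is a separability-transfer trick: realize $G\le \GL(V)$ so that $\mathfrak{g}$ is a $G$-stable direct summand of $\mathfrak{gl}(V)$ with $\operatorname{ad}x$ preserving the splitting (for $\fs$ in odd characteristic this is exactly the decomposition $V\otimes V\cong \Sym^2(V)\oplus \wedge^2(V)$ that the paper already uses before Lemma \ref{nilpdimsp}); since centralizers in $\GL(V)$ are unit groups of the associative centralizer algebra, the $\GL(V)$-orbit maps are separable, and intersecting $T_x(\GL(V)\cdot x)=[\mathfrak{gl}(V),x]$ with $\mathfrak{g}$ gives $T_x(G\cdot x)=[\mathfrak{g},x]$, hence the lemma, with no Jordan decomposition, no associated cocharacters, and no appeal to Premet. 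Your route --- smoothness of centralizers via the invariant form supplied by very good characteristic, reduction to the nilpotent case through the Jordan decomposition, and Premet's graded analysis --- is the standard modern proof under the ``standard hypotheses,'' and it does work, but it buys a generality the paper never needs at the cost of substantial cited machinery.

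Two points in your sketch need repair. First, ``very good'' does not literally pass to $H=C_G(s)$: a pseudo-Levi can have a factor of type $A_{m-1}$ with $p\mid m$ (e.g.\ a $\GL_m$ factor inside $\Sp_{2n}$ when a pair of eigenvalues $\pm\alpha$ has multiplicity $m$ divisible by $p$), and no central isogeny changes that. What is inherited, and what the nilpotent step really requires, is Richardson's direct-summand condition together with a nondegenerate invariant form restricting nondegenerately to $\operatorname{Lie}(H)$ --- the usual ``standard hypotheses'' --- not very goodness itself (the $\GL_m$ factors are harmless because the lemma is automatic for $\GL$). Second, the graded bookkeeping is off: with an associated cocharacter one has $\operatorname{ad}e:\mathfrak{g}(i)\to\mathfrak{g}(i+2)$ injective for $i\le -1$ and surjective for $i\ge -1$, which gives $\dim\{y:[e,y]=0\}=\dim\mathfrak{g}(0)+\dim\mathfrak{g}(1)$, not $\sum_{i\ge 0}\dim\mathfrak{g}(i)$ (already false for a regular nilpotent in rank one); the same correction applies to your count of $\dim C_G(e)$, and with both sides corrected the desired equality still follows. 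Your closing observation is the right one for this paper: for $\fs$ with $q$ odd the equality of centralizer dimensions is immediate from the explicit computations surrounding Lemma \ref{nilpdimsp} and Theorem \ref{countcentSp}, which is exactly what the authors mean by the remark that the lemma is easy in their special case.
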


\begin{lemma} \label{nilcen2} If an element $x$ of $\fs$ is nilpotent of type $\lambda$, then its centralizer in $\fs$ has size
\[ q^{\sum_i (\lambda_i')^2/2 + o(\lambda)/2 - \sum_i \lfloor m_i(\lambda)/2 \rfloor} .\]
\end{lemma}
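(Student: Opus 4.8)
Here the quantity to be computed is the number of nilpotent elements of $\fs$ commuting with $x$ --- equivalently, the number of nilpotent elements of the centralizer $\mathfrak{c}(x):=\{y\in\fs:[x,y]=0\}$ --- in parallel with Lemmas \ref{countnilcent} and \ref{countnilcentU}. The plan is to identify $\mathfrak{c}(x)$, over the algebraic closure, with the Lie algebra of the group centralizer $C(x):=C_{\Sp(2n,k)}(x)$, and then to count its nilpotent elements via Lemma \ref{lem:nilpotent}, the required dimension being supplied by Lemma \ref{nilpdimsp}.

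First I would observe that, since $x$ is nilpotent and $q$ is odd (so the characteristic is very good for $\Sp(2n)$), Lemma \ref{lem:richardson} identifies the centralizer of $x$ in $\mathfrak{sp}(2n,k)$ with $\mathrm{Lie}(C(x))=\mathrm{Lie}(C(x)^\circ)$. Because $x$ lies in $\fs$, i.e.\ is fixed by the Frobenius $F$ of $\Sp(2n,k)$, the group $C(x)$ and hence its identity component $C(x)^\circ$ are $F$-stable, and $F$ restricts to a Lang--Steinberg endomorphism of the connected group $C(x)^\circ$ whose associated field has size $q$. Taking $F$-fixed points in Richardson's identification gives $\mathfrak{c}(x)=\bigl(\mathrm{Lie}(C(x)^\circ)\bigr)^F$, so Lemma \ref{lem:nilpotent} applied to $C(x)^\circ$ shows that the number of nilpotent elements of $\mathfrak{c}(x)$ equals $q^{\dim C(x)^\circ-\rank C(x)^\circ}=q^{\dim C(x)-\rank C(x)}$.

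It remains to evaluate the two exponents. For the dimension: by Richardson's identification, $\dim C(x)$ equals the dimension of the centralizer of $x$ in $\mathfrak{sp}(2n,k)$, and part (2) of Lemma \ref{nilpdimsp} evaluates this as $\sum_i(\lambda_i')^2/2+o(\lambda)/2$. For the rank, I would invoke the standard description of the reductive part of $C(x)$ for a nilpotent $x$ of type $\lambda$ in $\Sp(2n,k)$: writing $m_i:=m_i(\lambda)$, and recalling that the odd parts of $\lambda$ occur with even multiplicity, this reductive part is $\prod_{i\ odd}\Sp(m_i)\times\prod_{i\ even}O(m_i)$. One can read this off from the setup preceding Lemma \ref{nilpdimsp} --- the identification $\fs\cong\Sym^2(V)$ together with the $\mathfrak{sl}_2$-triple through $x$, which equips each odd-part multiplicity space with an alternating form and each even-part multiplicity space with a symmetric form --- or simply cite the literature on nilpotent orbits in classical groups. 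Since $\Sp(2j)$ has rank $j$ and $O(m)$ has rank $\lfloor m/2\rfloor$, and $m_i$ is even for $i$ odd, this yields
\[ \rank C(x) \;=\; \sum_{i\ odd}\frac{m_i}{2}\;+\;\sum_{i\ even}\left\lfloor\frac{m_i}{2}\right\rfloor \;=\; \sum_i\left\lfloor\frac{m_i(\lambda)}{2}\right\rfloor . \]
Substituting both exponents gives the claimed value $q^{\sum_i(\lambda_i')^2/2+o(\lambda)/2-\sum_i\lfloor m_i(\lambda)/2\rfloor}$.

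The only genuine obstacle is the rank computation, i.e.\ pinning down the reductive part of $C(x)$ (equivalently, the dimension of a maximal torus centralizing $x$); every other step is a routine assembly of results already in hand. If one prefers to avoid an explicit appeal to the orbit classification, an alternative is the Levi decomposition $C(x)=Q\rtimes R$: then $\mathrm{Lie}(Q)$ is the nilpotent radical of $\mathfrak{c}(x)$ and consists entirely of nilpotent elements, so the count factors as $q^{\dim Q}$ times the number of nilpotent elements of $\mathrm{Lie}(R)^F$, the latter handled factor by factor via Lemma \ref{lem:nilpotent}; but one still has to identify $R$, so this does not really bypass the difficulty.
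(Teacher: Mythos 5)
Your proof is correct and follows essentially the same route as the paper: you correctly read the lemma as counting the \emph{nilpotent} elements of $\fs$ commuting with $x$ (not the full centralizer), identify $\mathfrak{c}(x)$ with $\mathrm{Lie}\bigl(C_{\Sp(2n,k)}(x)\bigr)$ via Richardson (Lemma~\ref{lem:richardson}), take the dimension from Lemma~\ref{nilpdimsp}, compute the rank of $C(x)$, and conclude by Lemma~\ref{lem:nilpotent}. The one place where you diverge is the rank: the paper simply cites Liebeck--Seitz \cite{LS}, whereas you supply a direct derivation from the reductive part $\prod_{i\ odd}\Sp(m_i)\times\prod_{i\ even}O(m_i)$ of the centralizer (using that $m_i$ is even when $i$ is odd, so both factors contribute $\lfloor m_i/2\rfloor$ to the rank). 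Your derivation is correct and a bit more self-contained, but the two approaches are substantively identical.
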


\begin{proof}   Note that we are in very good characteristic.   As we have already seen, the dimension of the centralizer
in the group
of a nilpotent element $x$ is  $\sum_i (\lambda_i')^2/2 + o(\lambda)/2$.   It follows by \cite{LS}  that the rank of the centralizer is
$\sum_i \lfloor m_i(\lambda)/2 \rfloor$.   By the result of Richardson above, the same is true for the centralizer
of  $x$ in $\fs$.  The result now follows by
 Lemma \ref{lem:nilpotent},
\end{proof}

This leads to the following theorem. In its statement, we let $NS_0=1$, and let $NS_n$ denote the number
of commuting ordered pairs of nilpotent elements in the Lie algebra of $\Sp(2n,q)$.

\begin{theorem} \label{nilpcomSp}
\[ \sum_{n \geq 0} \frac{NS_n u^n}{|\Sp(2n,q)|} = \frac{\prod_{i \geq 1} (1+u^i)}{\prod_{i \geq 1} \prod_{l \geq 0}
(1-u^i/q^{2l+1})} .\]
\end{theorem}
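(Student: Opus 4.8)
The strategy mirrors the proof of Theorem~\ref{nilGL} and Theorem~\ref{nilU}: combine a count of nilpotent elements of $\fs$ of each Jordan type with a count of the nilpotent centralizer, then repackage the double sum as an infinite product via Lemma~\ref{euler}. First I would recall from Lemma~\ref{nilcen} that the number of nilpotent elements of $\fs$ of type $\lambda$ (a partition of $n$ with odd parts occurring with even multiplicity) equals
\[ \frac{|\Sp(2n,q)|}{q^{\sum_i (\lambda_i')^2/2 + o(\lambda)/2} \prod_i (1-1/q^2)(1-1/q^4) \cdots (1-1/q^{2\lfloor m_i(\lambda)/2 \rfloor})} .\]
Next I would invoke Lemma~\ref{nilcen2}, which says the centralizer of such an $x$ inside $\fs$ has size $q^{\sum_i (\lambda_i')^2/2 + o(\lambda)/2 - \sum_i \lfloor m_i(\lambda)/2 \rfloor}$. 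Multiplying the number of elements of type $\lambda$ by the centralizer size gives the number of commuting ordered pairs whose first coordinate has type $\lambda$; the quadratic terms $q^{\sum_i (\lambda_i')^2/2 + o(\lambda)/2}$ cancel exactly, leaving
\[ NS_n = |\Sp(2n,q)| \sum_{\lambda} \frac{q^{-\sum_i \lfloor m_i(\lambda)/2 \rfloor}}{\prod_i (1-1/q^2)(1-1/q^4)\cdots(1-1/q^{2\lfloor m_i(\lambda)/2 \rfloor})} , \]
the sum over partitions $\lambda$ of $n$ with odd parts of even multiplicity.

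**Repackaging as a product.** The next step is to turn this sum into a generating function by treating each part size $i$ independently: $NS_n/|\Sp(2n,q)|$ is the coefficient of $u^n$ in $\prod_{i \geq 1} f_i(u)$, where $f_i(u) = \sum_{m} u^{im} q^{-\lfloor m/2 \rfloor}/\bigl((1-1/q^2)\cdots(1-1/q^{2\lfloor m/2\rfloor})\bigr)$, with the caveat that for odd $i$ the inner sum runs only over even $m$ and for even $i$ over all $m \geq 0$. This is exactly the structure that appeared as the factors $A1$ and $A2$ in the proof of Theorem~\ref{countpairsSp}, except that each monomial now carries an extra weight $q^{-\lfloor m/2 \rfloor}$. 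For $i$ odd, writing $m = 2s$, one has $f_i(u) = \sum_{s \geq 0} u^{2is} q^{-s}/(1/q^2)_s$, which by Lemma~\ref{euler} (with $u \mapsto u^{2i}/q$, $q \mapsto q^2$) equals $\prod_{l \geq 1} 1/(1 - u^{2i}/q^{2l})$. For $i$ even, splitting into even and odd $m$ as in the $A2$ computation should yield $f_i(u) = (1 + u^i/q^{?}) \prod_{l \geq 1} 1/(1-u^{2i}/q^{2l})$ — I expect the odd-$m$ terms to contribute a factor of the form $(1 + u^i/q)$ rather than $(1+u^i)$, because of the new weight, and pinning down that exponent precisely is the one delicate bookkeeping point. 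Assembling the factors over all $i$ and then substituting $u \mapsto u^{1/2}$ (equivalently, reading off the coefficient of $u^{2n}$ as in Theorem~\ref{countpairsSp}) should produce $\prod_{i \geq 1}(1+u^i)/\prod_{i \geq 1}\prod_{l \geq 0}(1-u^i/q^{2l+1})$, after shifting the product index $l$.

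**The main obstacle.** The routine part is the algebra; the genuinely delicate step is verifying the exponents in the $q$-power shifts so that the denominator comes out as $\prod_{l \geq 0}(1-u^i/q^{2l+1})$ (i.e.\ odd powers of $q$) rather than the $\prod_{l\geq 0}(1-u^i/q^{2l-1})$ of Theorem~\ref{countpairsSp}. Concretely: in Theorem~\ref{countpairsSp} the factors $B$ and $C$ coming from nonzero eigenvalues contributed, via Lemma~\ref{countpolySp}, a net shift that turned $q^{2l}$ into $q^{2l-1}$; here those factors are absent (both elements of the pair are nilpotent, so only the $\phi = x$ part survives), but the centralizer weight $q^{-\lfloor m_i/2 \rfloor}$ supplies its own shift. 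I would double-check by comparing the leading behavior: for small $q^{-1}$ the dominant term should match the known dimension count, namely that the nilpotent commuting variety in $\fs$ has dimension $\dim \fs = 2n^2 + n$, with the number of top-dimensional components equal to the number of partitions of $n$ into distinct parts — the latter is visible as the $\prod(1+u^i)$ factor in the answer, which is a reassuring consistency check that the $q^{-\lfloor m/2 \rfloor}$ weights have been handled correctly. Once the per-$i$ factorizations are confirmed, substituting and reindexing completes the proof.
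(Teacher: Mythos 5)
Your overall plan is exactly the paper's: combine Lemma~\ref{nilcen} with Lemma~\ref{nilcen2} to get
\[ NS_n = |\Sp(2n,q)| \sum_{\lambda} \frac{1}{\prod_i (1-1/q^2)\cdots(1-1/q^{2\lfloor m_i(\lambda)/2\rfloor}) \, q^{\lfloor m_i(\lambda)/2\rfloor}} ,\]
split the resulting generating function into odd-$i$ and even-$i$ factors exactly as was done for $A1,A2$ in Theorem~\ref{countpairsSp}, apply Lemma~\ref{euler}, and read off the coefficient of $u^{2n}$. So the strategy is not a gap. But the two places you flag as delicate are indeed where you go wrong, and they are not resolved.

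First, for odd $i$ you write $f_i(u)=\sum_{s\ge 0} u^{2is}q^{-s}/(1/q^2)_s$ and claim it equals $\prod_{l\ge 1}1/(1-u^{2i}/q^{2l})$. That is a mis-substitution. Writing the summand as $(u^{2i}/q)^s/(1/q^2)_s$ and applying Lemma~\ref{euler} with $q\mapsto q^2$, $u\mapsto u^{2i}/q$ gives $\prod_{l\ge 0}1/(1-u^{2i}/q^{2l+1})$, i.e.\ odd powers of $q$ in the denominator, not even powers; this is the source of the $q^{2l+1}$ in the stated theorem. (Note $\lambda$ is a partition of $2n$ here, since the natural module is $2n$-dimensional; your phrase ``a partition of $n$'' is a slip, though you read off the coefficient of $u^{2n}$ correctly at the end.)

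Second, for even $i$ you conjecture a prefactor $(1+u^i/q)$ from the odd-$m$ terms. In fact the prefactor is $(1+u^i)$: for $m$ even, $\lfloor (m+1)/2\rfloor = \lfloor m/2\rfloor$, so the weight $q^{-\lfloor m/2\rfloor}$ and the finite $q$-product in the denominator are both unchanged in passing from $m$ to $m+1$; the only change is an extra $u^i$ in the numerator. Hence
\[ \sum_{m\ge 0} \frac{u^{im}}{(1-1/q^2)\cdots(1-1/q^{2\lfloor m/2\rfloor})\, q^{\lfloor m/2\rfloor}} = (1+u^i)\sum_{m \text{ even}} \frac{u^{im}}{(1-1/q^2)\cdots(1-1/q^{m})\, q^{m/2}} = (1+u^i)\prod_{l\ge 0}\frac{1}{1-u^{2i}/q^{2l+1}} . \]
Taking the product over even $i\ge 2$ gives $\prod_{j\ge 1}(1+u^{2j})$ in the numerator, and combining with the odd-$i$ factor yields $\prod_{j\ge 1}(1+u^{2j})\big/\prod_{i\ge 1}\prod_{l\ge 0}(1-u^{2i}/q^{2l+1})$. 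Replacing $u^2$ by $u$ gives the theorem. Your dimension/component-count consistency check is a valid sanity check, but it only identifies the numerator $\prod(1+u^i)$; it does not determine the $q$-shift in the denominator, which is exactly the point you had not pinned down.
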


\begin{proof} It follows from Lemmas \ref{nilcen} and \ref{nilcen2} that $NS_{n}$ is equal to
\[ |\Sp(2n,q)| \sum_{|\lambda|=2n \atop i \ odd \ \implies m_i(\lambda) \ even} \frac{1}{\prod_i (1-1/q^2) \cdots (1-1/q^{2 \lfloor m_i(\lambda)/2 \rfloor})
q^{\lfloor m_i(\lambda)/2 \rfloor}} .\] Letting $[u^n] f(u)$ denote the coefficient of $u^n$ in a power series $f(u)$, it follows that
\[ NS_n = |\Sp(2n,q)| [u^{2n}] AB \] where
\[ A = \prod_{i \ odd} \sum_{m \ even} \frac{u^{im}}{(1-1/q^2) \cdots (1-1/q^m) q^{m/2}} \]
\begin{eqnarray*}
 B & = & \prod_{i \ even \atop i \geq 2} \sum_{m} \frac{u^{im}}{(1-1/q^2) \cdots (1-1/q^{2 \lfloor m/2 \rfloor}) q^{\lfloor m/2 \rfloor}} \\
& = & \prod_{i \ even \atop i \geq 2} (1+u^i) \sum_{m \ even} \frac{u^{im}}{(1-1/q^2) \cdots (1-1/q^m) q^{m/2}}.
\end{eqnarray*}

It follows from Lemma \ref{euler} that \[ AB = \frac{\prod_{i \geq 1} (1+u^{2i})}{\prod_{i \geq 1} \prod_{l \geq 0}
(1-u^{2i}/q^{2l+1})}, \] proving the theorem.
\end{proof}

\section{Acknowledgements} Fulman was partially supported by Simons Foundation Grant 400528.
Guralnick was partially supported by NSF grant DMS-1600056.
The authors thank Matt Young for asking about analogs of the Feit-Fine theorem for
other Lie algebras.

\end{document}